\theoremstyle{plain}
\newtheorem{theorem}{Theorem}[section]
\newtheorem{remark}[theorem]{Remark}
\newtheorem{proposition}[theorem]{Proposition}
\newtheorem{corollary}[theorem]{Corollary}
\numberwithin{equation}{section}
\theoremstyle{definition}
\theoremstyle{remark}
\newcommand{\bU}{{\mathbf U}}
\newcommand{\cC}{{\mathcal C}}
\newcommand{\cE}{{\mathcal E}}
\newcommand{\cG}{{\mathcal G}}
\newcommand{\cI}{{\mathcal I}}
\newcommand{\cM}{{\mathcal M}}
\newcommand{\cN}{{\mathcal N}}
\newcommand{\cR}{{\mathcal R}}
\newcommand{\cS}{{\mathcal S}}
\newcommand{\cU}{{\mathcal U}}
\newcommand{\bu}{{\mathbf u}}
\newcommand{\bbD}{{\mathbb D}}
\newcommand{\bbZ}{{\mathbb Z}}
\newcommand{\C}{{\mathbb C}}
\newcommand{\sbm}[1]{\left[\begin{smallmatrix} #1
          \end{smallmatrix}\right]}
\newcommand{\mat}[2]{\ensuremath{\left[\begin{array}{#1}
#2
\end{array} \right]}}
\newcommand{\Pick}{{\mathbb P}}
\newcommand{\Q}{{\mathbb Q}}
\newcommand{\tilE}{{\widetilde E}}
\newcommand{\tilX}{{\widetilde X}}
\newcommand{\tilW}{{\widetilde W}}
\newcommand{\tilK}{{\widetilde K}}
\newcommand{\half}{{\frac{1}{2}}}
\begin{document}

\title[Constrained Nevanlinna-Pick interpolation]{A constrained
Nevanlinna-Pick interpolation problem for matrix-valued functions }
\author[J. A. Ball]{Joseph A. Ball}
\address{Department of Mathematics,
Virginia Tech,
Blacksburg, VA 24061-0123, USA}
\email{ball@math.vt.edu}
\author[V. Bolotnikov]{Vladimir Bolotnikov}
\address{Department of Mathematics,
The College of William and Mary,
Williamsburg VA 23187-8795, USA}
\email{vladi@math.wm.edu}
\author[S. ter Horst]{Sanne ter Horst}
\address{Department of Mathematics,
Virginia Tech,
Blacksburg, VA 24061-0123, USA}
\email{terhorst@math.vt.edu}

\begin{abstract}
    Recent results of Davidson-Paulsen-Raghupathi-Singh
    give necessary and sufficient conditions for the existence of a
    solution to the Nevanlinna-Pick interpolation problem on the unit
    disk with the additional restriction that the interpolant should have
    the value of its derivative at the origin equal to zero.  This
    concrete mild generalization of the classical problem is prototypical
    of a number of other generalized Nevanlinna-Pick interpolation
    problems which have appeared in the literature (for example, on a
    finitely-connected planar domain or on the polydisk).  We extend the
    results of Davidson-Paulsen-Raghupathi-Singh to the setting where
    the interpolant is allowed to be matrix-valued and elaborate further
    on the analogy with the theory of Nevanlinna-Pick interpolation on a
    finitely-connected planar domain.

\end{abstract}

\subjclass{47A57, 30E05}
\keywords{Nevanlinna-Pick interpolation, matrix-valued functions,
reproducing kernel, Linear Matrix Inequalities}

\maketitle

\section{Introduction}

The classical Nevanlinna-Pick interpolation problem \cite{P16,N19}
has a data set
\begin{equation} \label{clas-data}
{\mathfrak D}: (z_{1}, w_{1}), \dots, (z_{n}, w_{n})
\end{equation}
where $(z_{1}, \dots, z_{n})$ is an $n$-tuple of distinct points in
the unit disk ${\mathbb D} = \{ \lambda \in {\mathbb C} \colon$
$|\lambda| < 1\}$ and $(w_{1}, \dots, w_{n})$ is a collection of
complex numbers in ${\mathbb C}$, and asks for the existence of a
{\em Schur-class function} $s$, i.e., a holomorphic function $s$
mapping the open unit disk ${\mathbb D}$ into the closed unit disk
$\overline{\mathbb D}$, such that $s$ satisfies the set of
interpolation conditions associated with the data set ${\mathfrak
D}$:
\begin{equation}  \label{clas-int}
       s(z_{j}) = w_{j}\quad \text{for}\quad j = 1, \dots, n.
\end{equation}
The well-known existence criterion \cite{P16} is the following: {\em
there exists a Schur-class function $s$ satisfying the interpolation
conditions \eqref{clas-int} if and only if the associated Pick
matrix given by
\begin{equation}\label{pick-clas}
      {\mathbb P}  = \left[ \frac{ 1 - w_{i} \overline{w}_{j}}{1 - z_{i}
     \overline{z}_{j}} \right]_{i,j = 1, \dots, n}
\end{equation}
is positive-semidefinite.}  In the recent paper \cite{DPRS},
Davidson-Paulsen-Raghupathi-Singh considered a variant of the
classical problem, where the Schur class ${\mathcal S}$ is replaced
by the constrained Schur-class ${\mathcal S}_{1}$ consisting of
Schur-class functions $s$ satisfying the additional constraint $s'(0) =
0$. It is readily checked that ${\mathcal
S}_{1}$ is the unit ball of the  Banach algebra $H^{\infty}_{1}=
\{f\in H^\infty({\mathbb D}): \; f'(0) =0\}$. The constrained
Nevanlinna-Pick
interpolation problem considered in \cite{DPRS} then is:

\medskip

{\bf CNP:} {\em Find a function $s\in{\mathcal S}_{1}$
satisfying interpolation conditions \eqref{clas-int}.}

\medskip

In case $z_{j} = 0$ for some $j$, one can treat the problem as a
standard Carath\'eodory-Fej\'er interpolation problem with
interpolation condition on the derivative at $0$:
\begin{equation}  \label{clas-int'}
s(z_{j}) = w_{j}\quad\text{for}\quad j = 1, \dots, n, \quad s'(0) = 0
\end{equation}
and the existence criterion has the same form: solutions exist if
and only if a slightly modified Pick matrix ${\mathbb P}$ is
positive-semidefinite.  If no $z_{j}$ is equal to $0$, the
solvability criterion is given in \cite{DPRS} as follows. With a
pair $(\alpha, \beta)$ of complex numbers we associate the positive
kernel on ${\mathbb D}\times {\mathbb D}$
\begin{equation}  \label{scalar-ker}
         K^{\alpha, \beta}(z,w) = (\alpha + \beta z) \overline{(\alpha +
         \beta w)} + \frac{ z^{2} \overline{w^{2}}}{1 - z \overline{w}}.
\end{equation}

\begin{theorem}
\label{T:1DPRS}
The problem {\bf CNP} has a solution if and only if the matrix
     \begin{equation}  \label{clas-crit1}
     \left[ (1 - w_{i} \overline{w_{j}}) K^{\alpha, \beta}(z_i,z_j)
     \right]_{i,j = 1, \dots, n}
     \end{equation}
is positive-semidefinite for each choice of $(\alpha, \beta)\in\C^2$
satisfying $|\alpha|^{2} + |\beta|^{2} = 1$.
\end{theorem}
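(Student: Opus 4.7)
The plan is to realize each reproducing kernel Hilbert space $\mathcal{H}(K^{\alpha,\beta})$ as a concrete subspace of $H^2$ on which $H^\infty_1$ acts by contractive multiplication, and then to invoke the standard reproducing-kernel Pick criterion. Decomposing $K^{\alpha,\beta}$ as the sum of the rank-one kernel of the line $\mathbb{C}(\alpha+\beta z)$ and the reproducing kernel $z^2\overline{w}^2/(1-z\overline{w})$ of $z^2 H^2$, one reads off
\[
\mathcal{H}(K^{\alpha,\beta}) = \mathbb{C}(\alpha+\beta z)\oplus z^2 H^2,
\]
with the two summands orthogonal inside $H^2$; the inclusion $\mathcal{H}(K^{\alpha,\beta})\hookrightarrow H^2$ is therefore isometric. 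A direct Taylor-series computation, writing any $f\in H^\infty_1$ as $f=a+z^2 f_0$, shows that $f\cdot \mathcal{H}(K^{\alpha,\beta})\subseteq \mathcal{H}(K^{\alpha,\beta})$, and the isometric embedding then forces $\|M_f\|_{\mathcal{H}(K^{\alpha,\beta})}\leq \|f\|_\infty$. In particular, every $s\in \mathcal{S}_1$ is a contractive multiplier of every $\mathcal{H}(K^{\alpha,\beta})$.

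The necessity direction follows from the standard reproducing-kernel routine. If $s\in \mathcal{S}_1$ satisfies $s(z_j)=w_j$, then the adjoint acts as $M_s^* K^{\alpha,\beta}_{z_j}=\overline{w_j}\,K^{\alpha,\beta}_{z_j}$, so applying $\|M_s^*\xi\|\leq \|\xi\|$ to a linear combination $\xi=\sum_j c_j K^{\alpha,\beta}_{z_j}$ yields
\[
\sum_{i,j} c_i\overline{c_j}\,(1-w_i\overline{w_j})K^{\alpha,\beta}(z_i,z_j)\geq 0,
\]
which is the positive semidefiniteness of the matrix in \eqref{clas-crit1}.

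The sufficiency direction is the main obstacle. No single $\mathcal{H}(K^{\alpha,\beta})$ is a complete Nevanlinna-Pick space, so positive semidefiniteness of the Pick matrix for one value of $(\alpha,\beta)$ is necessary but not sufficient to produce a multiplier, and one really needs the full family of PSD conditions to force the existence of an interpolant in $\mathcal{S}_1$. The plan is to run an Agler-type argument: factor each PSD Pick matrix as a Gram matrix and assemble these factorizations, jointly in $(\alpha,\beta)$, into a single lurking-isometry / state-space model from which $s$ emerges as a transfer function automatically lying in $\mathcal{S}_1$. Dually, one may instead argue that the hypothesis rules out every real-affine functional separating $(w_1,\ldots,w_n)$ from the compact convex set $\mathcal{E}=\{(s(z_1),\ldots,s(z_n)):s\in\mathcal{S}_1\}\subset\mathbb{C}^n$. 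The crux of the proof -- and the step I expect to be by far the most delicate -- is to show that the family $\{K^{\alpha,\beta}:|\alpha|^2+|\beta|^2=1\}$ parametrizes enough extreme admissible kernels on $\{z_1,\ldots,z_n\}$ for $H^\infty_1$ to witness every potential separating functional. This is the analogue, for $H^\infty_1$, of the parametrization by characters of the fundamental group in the Abrahamse Nevanlinna-Pick theory on multiply connected planar domains alluded to in the introduction.
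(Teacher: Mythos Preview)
Your necessity argument is correct and is exactly the paper's approach: realize $\mathcal{H}(K^{\alpha,\beta})$ concretely as $\mathbb{C}(\alpha+\beta z)\oplus z^{2}H^{2}\subset H^{2}$, check that $H^{\infty}_{1}$ multiplies this space into itself with the same norm, and run the standard $M_{s}^{*}$ computation on kernel elements.

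For sufficiency, however, you have only a plan with a confessed gap, and the plan does not match the paper's route. The paper (following \cite{DPRS} and ultimately \cite{Sarason}) does \emph{not} use an Agler lurking-isometry argument, nor an abstract extreme-kernel/separation argument. It uses a concrete Sarason-type duality. One first fixes any $Q\in H^{\infty}_{1}$ satisfying the interpolation conditions (Lagrange), and observes that a constrained Schur-class interpolant exists iff $\operatorname{dist}(Q,\cI_{\mathfrak D})\le 1$, where $\cI_{\mathfrak D}=\{F\in H^{\infty}_{1}:F(z_{j})=0\}$. By $L^{1}$--$L^{\infty}$ duality this distance equals the norm of the linear functional $h\mapsto \frac{1}{2\pi}\int_{\mathbb T}Qh\,|dz|$ on the pre-annihilator $(\cI_{\mathfrak D})_{\perp}\subset L^{1}$. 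The paper then (i) identifies $(\cI_{\mathfrak D})_{\perp}$ explicitly as $z^{-1}H^{1}_{a_{\perp},b_{\perp}}B_{\mathfrak D}^{*}$ for a suitable pair $(a_{\perp},b_{\perp})$, and (ii) proves the key factorization lemma: every $h\in(\cI_{\mathfrak D})_{\perp}$ with $\|h\|_{1}\le 1$ can be written as $h=g\,f^{*}$ with $g\in H^{2}_{\alpha,\beta}$ for \emph{some} $(\alpha,\beta)$ (depending on $h$), $f\perp H^{2}_{\alpha,\beta}\cap B_{\mathfrak D}H^{2}$, and $\|g\|_{2},\|f\|_{2}\le 1$. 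This turns the pairing $[Q,h]$ into $\langle P_{\cM^{\mathfrak D}_{\alpha,\beta}}M_{Q}g,\,f\rangle$, which is bounded by $\|P_{\cM^{\mathfrak D}_{\alpha,\beta}}M_{Q}|_{\cM^{\mathfrak D}_{\alpha,\beta}}\|$; the PSD hypothesis on each Pick matrix is exactly the statement that this compression has norm at most $1$. Hahn--Banach then extends the functional to all of $L^{1}$ with the same norm, and the Riesz representative is the desired $S$.

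The point you correctly flagged as ``the crux'' --- why the family $\{K^{\alpha,\beta}\}$ is rich enough --- is resolved not by classifying extreme kernels but by this factorization lemma: Riesz factorization of $H^{1}$ functions, combined with the computation of $(\cI_{\mathfrak D})_{\perp}$, forces one factor to land in some $H^{2}_{\alpha,\beta}$. That is the missing idea in your proposal.
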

In practice in Theorem \ref{T:1DPRS} it suffices to work with pairs
for which $\alpha \ne 0$. The paper \cite{DPRS} gives a second solution
criterion for the problem {\bf CNP}.
\begin{theorem}  \label{T:2DPRS} The problem {\bf CNP} has a solution if
and only if there exists $\lambda \in {\mathbb D}$ such that the
matrix
\begin{equation}  \label{clas-crit2}
    \left[ \frac{ z_{i}^{2} \overline{z}_{j}^{2} -
    \varphi_{\lambda}(w_{i}) \overline{ \varphi_{\lambda}(w_{j})}}{1 -
    z_{i} \overline{z_{j}}} \right]_{i,j = 1, \dots, n}
\end{equation}
is positive-semidefinite, where $\varphi_{\lambda}(z) =
{\displaystyle\frac{z - \lambda}{1 - \overline{\lambda} z}}$.
\end{theorem}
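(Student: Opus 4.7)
The plan is to reduce the constrained problem to the classical (unconstrained) Nevanlinna-Pick problem via a change of variables keyed to the value $\lambda=s(0)$. Given $s\in\mathcal{S}_1$ with $s(z_i)=w_i$, set $\lambda=s(0)$; we may assume $\lambda\in\mathbb{D}$, since $|\lambda|=1$ forces $s\equiv\lambda$ by the maximum modulus principle and the conclusion is then trivial. The composition $g:=\varphi_\lambda\circ s$ is Schur with $g(0)=\varphi_\lambda(\lambda)=0$ and $g'(0)=\varphi_\lambda'(\lambda)\,s'(0)=0$, so two successive applications of the Schwarz lemma show that $F(z):=g(z)/z^2$ is again a Schur-class function, with $F(z_i)=\varphi_\lambda(w_i)/z_i^2$ (using the standing assumption that no $z_i$ equals $0$).

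For the forward direction, the classical Pick theorem applied to $F$ yields
\[
\left[\frac{1-F(z_i)\overline{F(z_j)}}{1-z_i\overline{z_j}}\right]_{i,j=1}^n\succeq 0.
\]
Conjugating this by the invertible diagonal matrix $D=\mathrm{diag}(z_1^2,\ldots,z_n^2)$, which preserves positive semidefiniteness, and substituting $F(z_i)=\varphi_\lambda(w_i)/z_i^2$, produces exactly the matrix in \eqref{clas-crit2} with this particular $\lambda=s(0)$.

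For the reverse direction, suppose \eqref{clas-crit2} is positive semidefinite for some $\lambda\in\mathbb{D}$. Reversing the diagonal conjugation produces the classical Pick matrix associated to the interpolation data $\{(z_i,\varphi_\lambda(w_i)/z_i^2)\}_{i=1}^n$, which is therefore positive semidefinite, so Pick's theorem yields a Schur function $F$ with $F(z_i)=\varphi_\lambda(w_i)/z_i^2$. Define $s(z):=\varphi_{-\lambda}(z^2 F(z))$; since $z^2F(z)$ is Schur and $\varphi_{-\lambda}$ is a disk automorphism, $s$ is Schur. One checks that $s(0)=\varphi_{-\lambda}(0)=\lambda$ and $s'(0)=\varphi_{-\lambda}'(0)\cdot\bigl(2zF(z)+z^2F'(z)\bigr)\big|_{z=0}=0$, so $s\in\mathcal{S}_1$. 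Finally, using $\varphi_{-\lambda}=\varphi_\lambda^{-1}$, we obtain $s(z_i)=\varphi_{-\lambda}(\varphi_\lambda(w_i))=w_i$ for each $i$.

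The conceptual core — and the only step requiring genuine insight rather than calculation — is the recognition that the constraint $s'(0)=0$, together with the value $\lambda:=s(0)$, is exactly what forces $\varphi_\lambda\circ s$ to vanish to order at least two at the origin, allowing division by $z^2$ to return a Schur function. The existential quantifier on $\lambda$ in the theorem corresponds precisely to this free parameter $s(0)$, which cannot be read off the data in advance; this stands in contrast to Theorem \ref{T:1DPRS}, where the criterion is universal in an auxiliary parameter $(\alpha,\beta)$.
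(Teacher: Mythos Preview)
Your argument is correct and is essentially the direct function-theoretic proof: the key identity $\varphi_\lambda\circ s = z^{2}F$ with $F$ Schur, followed by classical Pick plus a diagonal conjugation, is exactly what is needed, and your converse construction $s=\varphi_{-\lambda}(z^{2}F)$ is clean.

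This is, however, a genuinely different route from the one the paper takes. The paper does not prove Theorem~\ref{T:2DPRS} in isolation; it establishes the LMI criterion of Theorem~\ref{T:2} by recasting the constraint $s'(0)=0$ together with the interpolation conditions as a Carath\'eodory--Fej\'er problem with a free parameter $X=s(0)$, and then (in the subsection ``The criterion in Theorem~\ref{T:2DPRS}'') recovers the matrix \eqref{clas-crit2} from the Carath\'eodory--Fej\'er Pick matrix $\widetilde{\mathbb P}_{X}$ by a Schur-complement reduction followed by conjugation with the diagonal matrix $T=\operatorname{diag}\bigl((1-\overline{\lambda}w_i)/\sqrt{1-|\lambda|^{2}}\bigr)$. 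Your M\"obius/Schwarz-lemma factorization is more elementary and transparent in the scalar case, but the paper's Pick-matrix/Schur-complement machinery is what carries over to the matrix-valued setting (Theorem~\ref{T:2}) and to general Blaschke products $B$ (Theorem~\ref{T:3}), where there is no analogue of composing with a single disk automorphism.

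One small point: your dismissal of the boundary case $|\lambda|=1$ as ``trivial'' is not quite right. If $s\equiv\mu$ with $|\mu|=1$ then every $w_i=\mu$, and for any $\lambda\in\mathbb D$ one has $|\varphi_\lambda(\mu)|=1$, so the $(i,i)$ entry of \eqref{clas-crit2} equals $-(1+|z_i|^{2})<0$; thus no $\lambda\in\mathbb D$ makes the matrix positive-semidefinite. This is a boundary artifact of the theorem as stated rather than a flaw in your reasoning, and the paper itself imposes $|w_i|<1$ when it analyzes the scalar LMI in detail; you should simply note that hypothesis rather than call the case trivial.
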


As pointed out in \cite{DPRS}, the criteria \eqref{clas-crit1} and
\eqref{clas-crit2} are complementary in the following sense. Using
\eqref{clas-crit1} to check that a solution exists appears not to be
practical since one must check the positive-semidefiniteness of
infinitely many matrices; on the other hand, to check that a solution
does not exists is a finite test (if one is lucky): exhibit one
admissible parameter $(\alpha, \beta)$ such that the associated Pick
matrix $ (1 - w_{i} \overline{w_{j}})K^{\alpha, \beta}(z_i, z_{j})$
is not positive-semidefinite.  The situation with the criterion
\eqref{clas-crit2} is the reverse.  To check that a solution exists
using criterion \eqref{clas-crit2} is a finite test: one need only
be lucky enough to find a single $\lambda$ for which the associated
matrix \eqref{clas-crit2} is positive-semidefinite.  On the other
hand, the check via \eqref{clas-crit2} that a solution does not exist
requires checking the lack of positive-semidefiniteness for an infinite
family of Pick matrices.

The paper \cite{DPRS} also considers the matrix-valued version of
the constrained Nevan\-linna-Pick problem.  Here one specifies a
data set
\begin{equation}  \label{op-data}
    {\mathfrak D}: (z_{1}, W_{1}), \dots, (z_{n}, W_{n})
\end{equation}
where $z_{1}, \dots, z_{n}$ are distinct nonzero points in ${\mathbb D}$ as
before, but where now $W_{1}$, $\dots$, $W_{n}$ are $k \times k$ complex
matrices. We let $(\cS_{1})^{k \times k}$ be the $k \times k$
matrix-valued constrained Schur-class
$$
(\cS_{1})^{k \times k} = \{ S \in (H^{\infty}({\mathbb D}))^{k \times k}
\colon \| S \|_{\infty} \le 1 \text{ and } S^\prime(0) = 0 \}.
$$
Then the $k \times k$ matrix-valued constrained Nevanlinna-Pick
problem is:
\medskip

{\bf MCNP:} {\em Find $S \in (\cS_{1})^{k \times k}$
satisfying interpolation conditions
\begin{equation}  \label{op-int}
     S(z_{j}) = W_{j}\quad\text{for}\quad j = 1, \dots, n.
\end{equation}}
It is not difficult to show that a {\em necessary} condition for the
existence of a solution to the problem ${\bf MCNP}$ is that the matrix
\begin{equation}  \label{nec-pos}
     \left[ (I_{k} - W_{i} W_{j}^{*})  K^{\alpha, \beta}(z_{i},
     z_{j}) \right]_{i,j = 1, \dots, n}
\end{equation}
be positive-semidefinite for all pairs of complex numbers $(\alpha,
\beta)$ with $|\alpha|^{2} + | \beta|^{2} = 1$.  The main result in
\cite{DPRS} on the problem {\bf MCNP} is that this condition in general is
not sufficient:  {\em there exist three distinct nonzero points
$z_1,z_2,z_3$ in ${\mathbb D}$ together with three $k \times k$ matrices
$W_{1}, W_{2},
W_{3}$ for some integer $k > 1$ so that the $3 \times 3$ block matrix $[(I_{k} -
    W_{i} W_{j}^{*}) K^{\alpha, \beta}(z_{i}, z_{j})]$ is positive
    semidefinite for all choices of $(\alpha, \beta)$ with $|\alpha|^{2}
    + | \beta|^{2} = 1$ yet there is no function $S \in (\cS_1)^{k\times
k}$ so that $S(z_{i}) = W_{i}$ for $i = 1, 2, 3.$}
One of the main results of the present paper is that nevertheless Theorem
\ref{T:1DPRS} can be recovered for the matrix-valued case if one
enriches the collection $\left[ (I - W_{i}W_{j}^{*})K^{\alpha,
\beta}(z_{i}, z_{j}) \right]$ of Pick matrices to be checked for
positive-semidefiniteness.

To state our results, we introduce some notation. For $\ell, \ell'$
a pair of integers satisfying $1 \le \ell \le \ell' \le k$,  we let
${\mathbb G}(\ell' \times \ell)$ be the set of pairs $(\alpha,
\beta)$ of $\ell' \times \ell$ matrices subject to the constraints
that $\alpha \alpha^{*} + \beta \beta^{*} = I_{\ell'}$ and that
$\alpha$ be injective. If two elements $(\alpha,\beta)$ and
$(\alpha',\beta')$ of ${\mathbb G}(\ell' \times \ell)$ are related
by $\alpha' = U\alpha$, $\beta' = U\beta$ for a $\ell' \times\ell'$
unitary matrix $U$, then the kernel functions $K^{\alpha, \beta}$
and $K^{\alpha', \beta'}$ (defined as in \eqref{RK} below) are the
same.  Thus what is of interest are the {\em equivalence classes} of
elements of ${\mathbb G}(\ell' \times \ell)$ induced by this
equivalence relation $(\alpha, \beta) \equiv (\alpha', \beta')$.  If
we drop the restriction that $\alpha$ is injective, then such
equivalence classes are in one-to-one correspondence with the {\em
Grassmannian} (as suggested vaguely by our notation ${\mathbb
G}(\ell' \times  \ell)$) of $\ell'$-dimensional subspaces of $ 2
\ell$-dimensional space; with the injectivity restriction on
$\alpha$ in place, ${\mathbb G}(\ell' \times \ell)$ still
corresponds to a dense subset of this Grassmannian.  This is the
canonical generalization of the analysis in \cite{DPRS} where the
special case $\ell' = \ell  = 1$ appears in the same context.

Given any $(\alpha, \beta) \in {\mathbb G}(\ell' \times \ell)$, we
define the $\ell \times \ell$-matrix kernel function $K^{\alpha,
\beta}(z,w)$ on ${\mathbb D} \times {\mathbb D}$ with values in
${\mathbb C}^{\ell \times \ell}$ by
\begin{equation}  \label{RK}
       K^{\alpha, \beta}(z,w) = (\alpha^{*} + \overline{w} \beta^{*})
       (\alpha + z \beta) + \frac{ \overline{w^{2}} z^{2}}{1 -
       \overline{w} z} I_{\ell}.
\end{equation}
    Then we have the following result.

    \begin{theorem}  \label{T:1}
The problem {\bf MCNP} has a solution if and only if for each $(\alpha,
\beta)\in {\mathbb  G}(\ell' \times \ell)$ and for all $n$-tuples $(X_{1},
\dots, X_{n})$
        of  $k \times \ell$ matrices $X_{1}, \dots, X_{n}$ where  $1 \le
        \ell \le \ell' \le k$, it is the case that
        \begin{equation}  \label{op-ker-family}
       \sum_{i,j=1}^{n} \operatorname{trace} \left[
     X_{j} K^{\alpha, \beta}(z_{i}, z_{j}) X_{i}^{*} -
        W_{j}^{*} X_{j} K^{\alpha, \beta}(z_{i}, z_{j}) X_{i}^{*}
        W_{i} \right] \ge 0.
        \end{equation}
      \end{theorem}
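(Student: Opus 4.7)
My plan is to adapt the Agler/Dritschel--McCullough test-function paradigm to the constrained matrix-valued setting. A preliminary observation is that the trace inequality \eqref{op-ker-family} is just a repackaging of block positivity: via the $\operatorname{vec}$-trace identity $\operatorname{trace}(X_i^* A X_j B) = \operatorname{vec}(X_i)^*(B^T\otimes A)\operatorname{vec}(X_j)$, the sum in \eqref{op-ker-family} equals (up to an innocuous transposition in $K^{\alpha,\beta}$, harmless since the family $\{K^{\alpha,\beta}\}$ is invariant under entrywise conjugation of the parameters) $\sum_{i,j}\operatorname{vec}(X_i)^*\bigl(K^{\alpha,\beta}(z_i,z_j)\otimes(I_k-W_iW_j^*)\bigr)\operatorname{vec}(X_j)$, so \eqref{op-ker-family} is equivalent to positive semidefiniteness of the block matrix $\bigl[K^{\alpha,\beta}(z_i,z_j)\otimes(I_k-W_iW_j^*)\bigr]_{i,j=1}^n$ for every admissible $(\alpha,\beta)$.

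The central step is to establish a realization theorem for the constrained matrix Schur class: $S$ lies in $(\mathcal{S}_1)^{k\times k}$ if and only if, for every $1\le\ell\le\ell'\le k$ and every $(\alpha,\beta)\in\mathbb{G}(\ell'\times\ell)$, the operator-valued kernel $K^{\alpha,\beta}(z,w)\otimes(I_k-S(z)S(w)^*)$ is positive on $\mathbb{D}\times\mathbb{D}$; equivalently, multiplication by $S$ is a contractive multiplier of $\mathcal{H}(K^{\alpha,\beta})\otimes\mathbb{C}^k$ for every such matricial test point. For the ``easy'' direction (contractive multiplication $\Rightarrow$ kernel positivity), one verifies that $K^{\alpha,\beta}$ is the reproducing kernel of the closed subspace of the vector Hardy space $H^2_\ell$ consisting of those $f$ whose first two Taylor coefficients $(f(0),f'(0))$ lie in a specific $\ell'$-dimensional subspace of $\mathbb{C}^{2\ell}$ determined by $(\alpha,\beta)$; the constraint $S'(0)=0$ then forces $M_S$ to preserve every such subspace, producing the asserted contractive multiplier and hence the tensor-kernel positivity.

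The \emph{necessity} of \eqref{op-ker-family} drops out immediately: given an interpolant $S\in(\mathcal{S}_1)^{k\times k}$, the realization theorem yields positivity of $K^{\alpha,\beta}(z,w)\otimes(I_k-S(z)S(w)^*)$, and evaluation at the nodes $z_i,z_j$ produces exactly the block positivity identified in the first paragraph, which is \eqref{op-ker-family}. For \emph{sufficiency}, the plan is a Hahn--Banach / Arveson-extension argument: the full collection of inequalities \eqref{op-ker-family} says precisely that the map on the operator system generated by the coordinate-evaluation operators in $\bigoplus_{(\alpha,\beta)}\mathcal{H}(K^{\alpha,\beta})\otimes\mathbb{C}^k$ sending $p(z_i)\otimes I_k\mapsto p(z_i)\otimes W_i^*$ is completely contractive; Arveson's extension theorem produces a completely positive extension to the ambient $C^*$-algebra, Stinespring dilation and passage to a minimal model give a contractive multiplier $S$ of every $\mathcal{H}(K^{\alpha,\beta})$ satisfying $S(z_i)=W_i$, and then the realization theorem identifies $S$ as an element of $(\mathcal{S}_1)^{k\times k}$.

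The main obstacle is the ``hard half'' of the realization theorem---that any $S$ passing all the matricial kernel tests genuinely lies in $(\mathcal{S}_1)^{k\times k}$. The DPRS counterexample recalled in the introduction shows that the purely scalar test family ($\ell=\ell'=1$) is \emph{not} sufficient in the matrix-valued case, so one must genuinely exploit the enlarged family $\bigcup_{1\le\ell\le\ell'\le k}\mathbb{G}(\ell'\times\ell)$. Verifying completeness of this enriched family amounts to an Agler-type realization theorem for the constrained matrix Schur class and is the technical heart of the argument: concretely, one must show that failure of $M_S$ to be a contractive multiplier on some $\mathcal{H}(K^{\alpha,\beta})\otimes\mathbb{C}^k$ always arises from a failing Pick condition at some explicit $(\alpha',\beta')$, which reduces to an invariant-subspace/Grassmannian analysis on the $2k$-dimensional space of first-two-Taylor-coefficients of $\mathbb{C}^k$-valued analytic functions.
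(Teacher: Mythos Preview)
Your necessity argument is essentially the paper's: $H^2_{\alpha,\beta}$ is invariant under left multiplication by $(H^\infty_1)^{k\times k}$, and contractivity of $M_S^*$ on the span of kernel elements yields \eqref{op-ker-family}. The sufficiency plan, however, has a genuine gap, and not where you locate it. You propose: \eqref{op-ker-family} $\Rightarrow$ the evaluation map is completely contractive $\Rightarrow$ Arveson extension $\Rightarrow$ Stinespring produces a contractive multiplier $S$ of every $H^2_{\alpha,\beta}$ $\Rightarrow$ $S\in(\cS_1)^{k\times k}$. The last implication is in fact easy (once $S$ is a left multiplier of a single $H^2_{\alpha,\beta}$ with $\alpha$ invertible, already $S'(0)=0$ and $\|S\|_\infty\le 1$; see Proposition~\ref{P:1} and the remark following it). The real obstacles are the first and third implications. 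For the first, complete contractivity means contractivity in \emph{every} matrix representation of $H^\infty_1/\cI_{\mathfrak D}$; to deduce this from \eqref{op-ker-family} you would need to know in advance that the family $\{H^2_{\alpha,\beta}:(\alpha,\beta)\in\bigcup_{\ell,\ell'}\mathbb G(\ell'\times\ell)\}$ completely norms the quotient, which is essentially the theorem itself (it is the content of Corollary~\ref{C:distance}, a \emph{consequence} of Theorem~\ref{T:1}). For the third, Stinespring dilation yields an abstract $*$-representation on some Hilbert space, not a multiplication operator; extracting a bona fide function $S$ with $S(z_i)=W_i$ requires a concrete identification of the $C^*$-envelope or a lurking-isometry realization, neither of which you supply. (Your opening tensor reformulation also needs care: the vec identity gives positivity of $\bigl[K^{\alpha,\beta}(z_i,z_j)^{T}\otimes(I_k-W_iW_j^*)\bigr]$, and pointwise transpose is a partial transpose on the inner factor, which does \emph{not} leave the family $\{K^{\alpha,\beta}\}$ invariant---replacing $(\alpha,\beta)$ by $(\bar\alpha,\bar\beta)$ does not undo it.)

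The paper avoids this circularity by a direct Sarason-type $L^1$--$L^\infty$ duality argument. One fixes any $Q\in(H^\infty_1)^{k\times k}$ with $Q(z_i)=W_i$ and shows that the functional $h\mapsto\frac{1}{2\pi}\int_{\mathbb T}\operatorname{trace}(Qh)\,|dz|$ has norm at most $1$ on the pre-annihilator $(\cI_{\mathfrak D})_\perp\subset(L^1)^{k\times k}$; Hahn--Banach and Riesz representation then produce the interpolant $S$ directly in $(L^\infty)^{k\times k}$, with $S-Q\in\cI_{\mathfrak D}$ forcing the interpolation conditions and membership in $(H^\infty_1)^{k\times k}$. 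The technical heart is Proposition~\ref{P:5}: every $h\in(\cI_{\mathfrak D})_\perp$ admits a matrix Riesz factorization $h=gf^*$ with $g\in H^2_{\alpha,\beta}$, $f\perp H^2_{\alpha,\beta}\cap B_{\mathfrak D}(H^2)^{k\times\ell}$, and $\|g\|_2\|f\|_2=\|h\|_1$, where the pair $(\alpha,\beta)$ is \emph{manufactured from the first two Taylor coefficients of the outer factor} of a normalization of $h$. This is how the Grassmannian parameters enter the argument, and it is precisely what converts the hypothesis \eqref{op-ker-family} (equivalently, the operator-norm bound \eqref{op-form-ker-fam}) into the required estimate $\|\mathbb L_Q\|\le 1$.
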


      In case $k=1$, we have $\ell' =
\ell =
      1$ and then ${\mathbb G}(1 \times 1)$ consist of pairs of
complex numbers
      $(\alpha, \beta)$ subject to $|\alpha|^{2} + |\beta|^{2} = 1$ and
      the associated kernel $K^{\alpha, \beta}$ is as in
      \eqref{scalar-ker}.  Then the quantities $X_{1}, \dots, X_{n}$ are
      just complex numbers and it is straightforward to see that the
      condition in Theorem \ref{T:1} collapses to the
      positive-semidefiniteness of the single matrix in
      \eqref{clas-crit1} for each $(\alpha, \beta) \in {\mathbb G}(1
      \times 1)$.
      In this way we see that Theorem \ref{T:1} contains Theorem
      \ref{T:1DPRS} as a corollary.  For the general case of Theorem
      \ref{T:1}, we see that restriction of Theorem \ref{T:1} to the
      case where $\alpha,
      \beta$ are scalar $k \times k$ matrices leads to the necessity of
      the positive-semidefiniteness of the matrix  \eqref{nec-pos}.

      We also have an extension of Theorem
      \ref{T:2DPRS} to the matrix-valued setting.  Given a data set
      ${\mathfrak D}$ as in \eqref{op-data} with no $z_{i}$ equal to $0$,
      the Pick matrix associated with the standard matrix-valued Nevanlinna-Pick
      problem for this data set is
      \begin{equation}\label{pick-mat}
{\mathbb P} = \left[ \frac{I_{k} - W_{i} W_{j}^{*}}
      {1 - z_{i} \overline{z}_{j}} \right]_{i,j = 1, \dots n}.
\end{equation}
We define auxiliary matrices
\begin{equation}\label{ZEW}
Z = \begin{bmatrix} z_{1}I_{k} & & \\ & \ddots & \\ & &
          z_{n} I_{k} \end{bmatrix}, \quad E = \begin{bmatrix} I_{k} \\
          \vdots \\ I_{k} \end{bmatrix}, \quad
           W = \begin{bmatrix} W_{1} \\ \vdots \\ W_{n} \end{bmatrix}.
\end{equation}
      For $X$ a free-parameter $k \times k$
      matrix, known results on matrix-valued Carath\'eodory-Fej\'er
      interpolation (see e.g.~\cite{BGR}) show that the
       Pick matrix for the  Carath\'eodory-Fej\'er interpolation problem
      of finding $S \in (\cS)^{k \times k}$ satisfying the
      extended set of interpolation conditions
      $$
      S(z_{j}) = W_{j}\quad \text{for}\quad j = 1, \dots, n \quad\text{and}
      \quad S(0) = X, \; \; S^\prime(0) = 0
      $$
      is given by
      \begin{equation}  \label{Pick-X'}
       {\mathbb P}_{X}' = \begin{bmatrix}
       {\mathbb P}&E - WX^{*} & Z (E - WX^{*})\\
       E^{*} - X W^{*} & I_{k}-XX^* & 0\\
       (E^{*} - X W^{*})Z^{*} & 0 & I_{k}-XX^*
       \end{bmatrix}.
      \end{equation}
A standard Schur-complement manipulation shows that the Pick matrix ${\mathbb P}_{X}'$
being positive-semidefinite is equivalent to the
positive-semidefiniteness of the matrix
\begin{equation}  \label{Pick-X}
       {\mathbb P}_{X} = \begin{bmatrix}
       {\mathbb P}&E - WX^{*} & Z (E - WX^{*})&0&0\\
       E^{*} - X W^{*} & I_{k}&0&X&0\\
       (E^{*} - X W^{*})Z^{*} & 0 & I_{k}&0&X\\
       0&X^*&0&I_k&0\\
       0&0&X^*&0&I_k
       \end{bmatrix}.
      \end{equation}
      We are thus led to the following result.

      \begin{theorem}  \label{T:2}
         The problem {\bf MCNP} has a solution if and only if there
          exists a $k \times k$ matrix $X$ so that the matrix ${\mathbb
          P}_{X}$ given by \eqref{Pick-X} is positive-semidefinite.
       \end{theorem}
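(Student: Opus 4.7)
The plan is to reduce problem {\bf MCNP} to a standard (i.e., unconstrained) matrix-valued interpolation problem by adding a single extra interpolation condition at the origin that records the value $S(0)$ as a free parameter $X$. The first step is the elementary equivalence that $S\in(\cS_1)^{k\times k}$ solves {\bf MCNP} if and only if, with $X:=S(0)$, the same $S$ is a Schur-class solution of the enlarged problem
\[
  S(z_j) = W_j\;(j=1,\dots,n),\quad S(0)=X,\quad S'(0)=0.
\]
(The ``if'' direction is immediate since the enlarged problem forces $S'(0)=0$, so any Schur-class solution automatically lies in $(\cS_1)^{k\times k}$.) Consequently, {\bf MCNP} is solvable if and only if there exists a $k\times k$ matrix $X$ for which this enlarged problem admits a solution in $(\cS)^{k\times k}$.

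The second step invokes the standard solvability theorem for the mixed Nevanlinna--Pick/Carath\'eodory--Fej\'er interpolation problem with $n$ simple nodes $z_1,\dots,z_n\ne 0$ together with a confluent node of order two at the origin prescribing the jet $(X,0)$: by \cite{BGR}, a matrix Schur-class solution exists if and only if the associated block Pick matrix ${\mathbb P}_X'$ from \eqref{Pick-X'} is positive-semidefinite. Combining with the first step, {\bf MCNP} is solvable if and only if there is some $X$ with ${\mathbb P}_X'\ge 0$. The Schur-complement identity already recorded in the paragraph preceding the theorem statement then yields ${\mathbb P}_X'\ge 0\iff {\mathbb P}_X\ge 0$, which completes the argument.

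The main obstacle is essentially bookkeeping: one must verify that the BGR machinery, applied to this mixed simple/confluent data configuration, really produces the matrix ${\mathbb P}_X'$ displayed in \eqref{Pick-X'} in exactly the asserted form. This requires computing the appropriate values and $\overline{w}$-derivatives of the Cauchy-type kernel $(1-z\overline{w})^{-1}(I_k-S(z)S(w)^*)$ at the node $w=0$ against the prescribed jet $(X,0)$, and assembling these with the $n\times n$ block $\mathbb P$ from \eqref{pick-mat}; once that identification is in hand, the remainder of the theorem is an immediate consequence of the reduction above.
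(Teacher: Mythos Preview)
Your proposal is correct and follows essentially the same approach as the paper: introduce the free parameter $X=S(0)$, reduce {\bf MCNP} to a standard mixed Nevanlinna--Pick/Carath\'eodory--Fej\'er problem whose Pick matrix is ${\mathbb P}_X'$ by \cite{BGR}, and then pass to ${\mathbb P}_X$ via the Schur-complement manipulation. The only difference is that the paper first proves the more general Theorem~\ref{T:3} for an arbitrary finite Blaschke product $B$ (using the Stein-equation formalism \eqref{Stein}--\eqref{QtildeQ}) and then specializes to $B(z)=z^2$, whereas you work directly in the special case.
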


       For the case $k=1$ the criterion in Theorem \ref{T:2}, while of
       the same flavor, is somewhat different from the criterion in
       Theorem \ref{T:2DPRS} since the free parameter $X = [x]$ in the
       matrix ${\mathbb P}_{x}$ appears linearly in \eqref{Pick-X} while
       the free parameter $\lambda$ in the matrix ${\mathbb P}_{\lambda}$
in
       \eqref{clas-crit2} appears in a linear-fractional form in the
       matrix entries. There is an advantage in the linear
       representation \eqref{Pick-X} over the fractional representation
       \eqref{clas-crit2} (or the quadratic in \eqref{Pick-X'}); specifically, the search for a parameter $X$
       which makes the matrix ${\mathbb P}_{X}$ positive-semidefinite is
       a problem of a standard type called a Linear Matrix Inequality
       (LMI) for which efficient numerical algorithms are available for
       determining if a solution exists (see \cite{NN}).
         However in Section
       \ref{S:LMI} we also give an analytic treatment concerning
       existence and parametrization of solutions for the LMIs
       occurring here; only in some special cases does one get a clean
       positivity test for existence of solutions and, when solutions
       exist, a complete linear-fractional description  for the set
       of all solutions. The type of LMIs occurring here are similar
       to the {\em structured LMIs} arising in the robust control
       theory for control systems subject to structured balls of
       uncertainty with a so-called linear-fractional-transformation
       model---see \cite{DP}.

       It is not immediately clear how to convert the criterion in
Theorem \ref{T:1}
       to an LMI; what is true is that if one is lucky enough to find a
       particular $(\alpha, \beta) \in {\mathbb G}(1 \times 1)$ along with
a
       tuple $X_{1}, \dots, X_{n}$ of $k \times \ell$ matrices for which
       the matrix \eqref{op-ker-family} is not positive-semidefinite,
       then it necessarily follows that the matrix-valued constrained
       Nevanlinna-Pick interpolation problem does not have a solution.

       We remark that Theorems \ref{T:1DPRS} and \ref{T:1} can be seen as
       parallel to the situation for Nevanlinna-Pick interpolation over
       a finitely-connected planar domain (see \cite{Abrahamse, Ball79}).
       For $\cR$ a bounded domain in the complex plane bounded by $g+1$
       disjoint analytic Jordan curves, Abrahamse \cite{Abrahamse}
identified a family of
       reproducing kernel Hilbert spaces $H^{2}_{\bu}(\cR)$ indexed by
       points $\bu$ in the $g$-torus ${\mathbb T}^{g}$ with
       corresponding kernels $K_{\bu}(z,w)$.  His result is that, given
        $n$ distinct points $z_{1}, \dots, z_{n}$ in $\cR$ and $n$
        complex values $w_{1}, \dots, w_{n}$, there exists a
        holomorphic function $f$ mapping $\cR$ into the closed unit disk
        $\overline{\mathbb D}$ which also satisfies the interpolation
        conditions $f(z_{i}) = w_{i}$ for $1 \le i \le n$ if and only if
        the $n \times n$ matrix $[(1 - w_{i} \overline{w_{j}})
        K_{\bu}(z_{i}, z_{j})]_{i,j=1, \dots, n}$ is positive
        semidefinite for all $\bu \in {\mathbb T}^{g}$. The first author
        \cite{Ball79} obtained a commutant lifting theorem for the
        finitely-connected planar-domain setting which, when specified
        to a matrix-valued interpolation problem over $\cR$, yields the
        following result.

        \begin{theorem} \label{T:Ball79}  For each $\ell = 1,2, \dots$
        there exists a family of $\ell
       \times \ell$ matrix-valued kernels $K_{\bU}(z,w)$ on $\cR$
       indexed by $g$-tuples $\bU = (U_{1}, \dots, U_{g}) \in
       \cU(\ell)^{g}$ of $\ell \times \ell$ unitary matrices so that the
       following holds:  given distinct points $z_{1}, \dots, z_{n}
       \in \cR$ and $k \times k$ matrices $W_{1}, \dots, W_{n}$,
       there exists a holomorphic function $F$ on $\cR$ with
       values in the closed unit ball of $k \times k$ matrices
       which satisfies the interpolation conditions
       \begin{equation}   \label{R-int}
         F(z_{i}) = W_{i} \quad \text{for}\quad i = 1, \dots, n
       \end{equation}
       if and only if, for any choice of $\bU \in \cU(\ell)^{g}$
       and $n$-tuple $X_{1}, \dots, X_{n}$ of $k \times \ell$
       matrices for $1 \le \ell \le k$, it holds that
       \begin{equation}   \label{R-matrixPick}
       \sum_{i,j = 1, \dots, n} \operatorname{trace} \left[ X_{j}
       K_{\bU}(z_{i}, z_{j}) X_{i}^{*} - W_{j}^{*} X_{j}
       K_{\bU}(z_{i}, z_{j}) X_{i}^{*} W_{i} \right] \ge 0.
       \end{equation}
     \end{theorem}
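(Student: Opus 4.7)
The plan is to prove necessity by a reproducing kernel computation, and sufficiency via the commutant lifting theorem for $\cR$ established in \cite{Ball79}. In the background, for each $\ell \geq 1$ and $\bU = (U_1, \ldots, U_g) \in \cU(\ell)^g$, the kernel $K_\bU(z,w)$ is the reproducing kernel of the Hilbert space $H^2_\bU(\cR; {\mathbb C}^\ell)$ of holomorphic sections of the flat rank-$\ell$ unitary vector bundle on $\cR$ determined by $\bU$ through the character representation of $\pi_1(\cR) \cong {\mathbb Z}^g$. The key fact I will use throughout is the multiplier-norm identity
\[
\|F\|_\infty \;=\; \sup_{\ell,\; \bU \in \cU(\ell)^g}\, \bigl\|M_F^{(\bU)}\bigr\|_{B(H^2_\bU(\cR; {\mathbb C}^\ell) \otimes {\mathbb C}^k)},
\]
the vector-bundle refinement of Abrahamse's identification of $H^\infty(\cR)$ with the commutant of $M_z$ on a direct integral of bundle spaces.

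For necessity, suppose $F$ is a holomorphic $k \times k$ matrix-valued contraction on $\cR$ with $F(z_i) = W_i$. Fix $\bU$ and $k \times \ell$ matrices $X_1, \ldots, X_n$, and form $f = \sum_i K_\bU(\cdot, z_i) X_i^* \in H^2_\bU(\cR; {\mathbb C}^\ell) \otimes {\mathbb C}^k$, interpreting $K_\bU(z, z_i) X_i^*$ as ${\mathbb C}^{\ell \times k}$-valued under the identification ${\mathbb C}^\ell \otimes {\mathbb C}^k \cong {\mathbb C}^{\ell \times k}$. The standard reproducing-kernel calculation gives $(M_F^{(\bU)})^*[K_\bU(\cdot, z_i) X_i^*] = K_\bU(\cdot, z_i) X_i^* W_i^*$, so the inequality $\|f\|^2 \geq \|(M_F^{(\bU)})^* f\|^2$, expanded via the Hilbert--Schmidt inner product on ${\mathbb C}^{\ell \times k}$ and the cyclicity of trace, reduces to exactly \eqref{R-matrixPick}.

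For sufficiency, I read the positivity hypothesis as the statement that for each $\bU$ the operator $T_\bU$ on the finite-dimensional subspace of $H^2_\bU(\cR; {\mathbb C}^\ell) \otimes {\mathbb C}^k$ spanned by the vectors $K_\bU(\cdot, z_i) X_i^*$, defined by $T_\bU\bigl(K_\bU(\cdot, z_i) X_i^*\bigr) = K_\bU(\cdot, z_i) X_i^* W_i^*$, is well-defined, contractive, and intertwines the compressions of $M_z^*$ to this subspace. Applying the commutant lifting theorem of \cite{Ball79} to the algebra $H^\infty(\cR) \otimes M_k$ acting on the direct integral $\int^{\oplus} H^2_\bU(\cR; {\mathbb C}^\ell) \otimes {\mathbb C}^k \, d\bU$, one extends $\bigoplus_\bU T_\bU^*$ to a contractive operator commuting with multiplication by $z$ on the whole space; by the commutant description it is $M_F^*$ for a single $F \in H^\infty(\cR) \otimes M_k$ satisfying $F(z_i) = W_i$ and $\|M_F^{(\bU)}\| \leq 1$ for every $\bU$, and the multiplier-norm identity above then forces $\|F\|_\infty \leq 1$. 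The main obstacle is justifying that rank-$\ell$ bundles up to $\ell = k$ --- rather than merely the line bundles ($\ell = 1$) that suffice in the scalar Abrahamse setting --- really do detect contractivity of matrix multipliers; that is precisely the content of the commutant lifting result in \cite{Ball79}, and it is the structural reason the statement of Theorem \ref{T:Ball79} ranges over all $\ell$ in $\{1, \ldots, k\}$ and all unitary tuples $\bU \in \cU(\ell)^g$.
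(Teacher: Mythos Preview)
Your proposal is correct and follows essentially the same route as the paper. The paper isolates the commutant lifting input from \cite{Ball79} as a separate statement (Theorem~\ref{T:Ball79'}): an interpolant exists if and only if $\sup_{1\le \ell\le k}\sup_{\bU\in\cU(\ell)^g}\|\Gamma^{\bU}\|\le 1$, where $\Gamma^{\bU}=P_{\cN^{\bU}}M_{F_0}|_{\cN^{\bU}}$ is exactly your $T_{\bU}^*$; it then converts $\|\Gamma^{\bU}\|\le 1$ into \eqref{R-matrixPick} by the same reproducing-kernel computation you describe, using \eqref{span} and \eqref{Gamma-action}. Your direct-integral phrasing of the lifting step is a legitimate repackaging of the same content, though the paper's formulation via the uniform bound on the individual compressions $\Gamma^{\bU}$ avoids having to argue that the lifted operator on the direct integral is implemented by a \emph{single} $F$ independent of $\bU$---that coherence is precisely what Theorem~\ref{T:Ball79'} (i.e.\ \cite[Theorem 4.3]{Ball79}) supplies as a black box.
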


     We conclude that Theorem \ref{T:1DPRS} can be viewed as an analogue
     of Abrahamse's result \cite{Abrahamse} while Theorem \ref{T:1} can
     be viewed as the parallel analogue of the matrix-valued extension
     of Abrahamse's result Theorem \ref{T:Ball79}.

     In the sequel we actually formulate and prove a more general version
    of Theorem \ref{T:2}, where the algebra $H^{\infty}_{1}$ is replaced
    by an algebras of the form $H^{\infty}_{B}: =
    {\mathbb C} + B H^{\infty}$ with $B$ a finite Blaschke product.
    The algebra $H^{\infty}_{1}$ is recovered as the special case where
    $B(z) = z^{2}$.  Particular examples of the case where each zero of
    $B(z)$ has simple multiplicity were studied in \cite{Solazzo} and
other
    results in this direction are given in \cite{Raghu}.
         Additional motivation for the study of algebras like
$H^{\infty}_{B}$
    comes from the theory of algebraic curves:
         Agler-McCarthy \cite{AMcC} have shown that the algebra
    $H^{\infty}_{1}$ is isometrically isomorphic to the algebra
    $H^{\infty}(V)$ of bounded analytic functions on the variety
    $\{(z^{2}, z^{3}) \colon z \in {\mathbb D}\}$; we expect that
    algebras of the type $H^{\infty}_{B}$ will play a similar role
    for more general varieties.
     We expect that Theorem \ref{T:1} can also be extended to the
    setting where the algebra $H^{\infty}_{1}$ is replaced by
$H^{\infty}_{B}$; we
    have chosen to restrict ourselves to the case $B(z) = z^{2}$ to
    keep the notation simple and explicit.

     The paper is organized as follows.  In Section \ref{S:T1} we prove
      Theorem \ref{T:1}. As a corollary we obtain a distance formula for
     the subspace $\cI_{{\mathfrak D}}$ of elements in
$(H^{\infty}_{1})^{k \times k}$
     satisfying the associated set of homogeneous interpolation
     conditions;  this extends another scalar-valued result from \cite{DPRS}.
     We also obtain a Beurling-Lax theorem
     for this setting and provide some detail on how the lifting
     theorem from \cite{Ball79} leads to Theorem \ref{T:Ball79}.
In Section \ref{S:T2} we prove our extension of Theorem
     \ref{T:2}.  In Section \ref{S:LMI} we provide a
     further analysis of the LMIs which arise here.
     We use this analysis in Section \ref{S:bodies} to discuss the
     geometry of the set of values $w = s(z_{0})$ (where $z_{0}$ is a
     nonzero point in the disk not equal to one of the interpolation
     nodes $z_{1}, \dots, z_{n}$)  associated with the
     set of all constrained Schur-class solutions  of a constrained
     Nevanlinna-Pick interpolation problem.

     \section{Proof of Theorem \ref{T:1}}  \label{S:T1}

     \subsection{Preliminaries}

     We review some preliminaries and notation.  $\cC^{2, k \times
     \ell}$ denotes the space of $k \times \ell$ complex matrices
     considered as a Hilbert space with the inner product
     $$
      \langle X, Y \rangle_{\cC^{2, k \times \ell}} =
      \operatorname{trace} [ Y^{*} X ].
     $$
     Similarly, $(L^{2})^{k \times \ell}$ is the space of $k \times
     \ell$ matrices with entries equal to $L^{2}$ functions on the
     circle ${\mathbb T}=\{\lambda\in{\mathbb C}\colon |\lambda|=1\}$ considered
     as a Hilbert space with the inner product
     $$
       \langle F, G \rangle_{(L^{2})^{k \times \ell}} = \frac{1}{2 \pi}
       \int_{{\mathbb T}} \operatorname{trace}[G(z)^{*} F(z)]\, |dz|.
      $$
      Related spaces are the Banach space $(L^{1})^{k \times \ell}$
      of $k \times\ell$ matrices with entries equal to $L^{1}$ functions on ${\mathbb T}$
      with norm given by
      $$
       \| F \|_{(L^{1})^{k \times \ell}} = \frac{1}{2 \pi}
       \int_{{\mathbb T}} \operatorname{trace} [ (F(z)^{*}
       F(z))^{1/2}]\, |dz|,
      $$
      and the Banach space $(L^{\infty})^{k \times \ell}$ of $k
\times\ell$
      matrices with entries equal to $L^{\infty}$ functions on ${\mathbb T}$ with the
      supremum norm $\|\ \cdot  \|_\infty$. We will also use the Hilbert
space
      $(H^{2})^{k \times \ell}$ and the Banach spaces $(H^{1})^{k \times
\ell}$
      and $(H^{\infty})^{k \times \ell}$ which are the restrictions
      of $(L^{2})^{k \times \ell}$, $(L^{1})^{k \times \ell}$
      and $(L^{\infty})^{k \times \ell}$, respectively, to the matrices
whose
      entries are analytic functions.

      The Riesz representation theorem for this setting
      identifies the dual of $(L^{1})^{k \times \ell}$ as the space
      $(L^{\infty})^{\ell \times k}$ via the pairing
      \begin{equation}  \label{pairing}
      [ F, G ]_{(L^{\infty})^{\ell \times k} \times (L^{1})^{k \times
      \ell}} = \frac{1}{2 \pi} \int_{{\mathbb T}} \operatorname{trace}
      [ F(z) G(z) ]\, |dz|.
      \end{equation}
      Moreover, the pre-annihilator of $(H^{\infty})^{k \times \ell}$
under this pairing is given by
      \begin{equation} \label{Hinf-preann}
     ((H^{\infty})^{k \times  \ell})_{\perp} = z \cdot
(H^{1})^{\ell \times k}.
    \end{equation}

    We shall make use of the following matrix-valued analogue of the
    Riesz factorization theorem.

    \begin{theorem} \label{T:Riesz}
        Any $h \in (H^{1})^{k \times k}$ can by factored as
        $$
        h = g f_{0}
        $$
        where $g \in (H^{2})^{k \times \ell}$ is such that its
        transpose $g^{\top}$ is outer, $f_{0} \in (H^{2})^{\ell \times
k}$,
        and
        $$
        \|f_{0}\|_{2}^{1/2} = \| g \|_{2}^{1/2} = \| h\|_{1}.
        $$
        \end{theorem}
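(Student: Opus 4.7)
The plan is to combine the boundary polar decomposition of $h$ with a matrix-valued outer factorization of a positive $L^{1}$ kernel. I assume $h\not\equiv 0$, and let $\ell$ denote the generic rank of $h$ on $\mathbb{T}$.

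First, I would record the pointwise polar decomposition on $\mathbb{T}$: almost everywhere, write $h(\zeta) = P(\zeta)V(\zeta)$ where $P(\zeta):=(h(\zeta)h(\zeta)^{*})^{1/2}\ge 0$ has rank $\ell$ and $V(\zeta)$ is a partial isometry with final space $\operatorname{ran} P(\zeta)$. Since the pointwise Schatten $1$-norm of $h(\zeta)$ equals $\operatorname{trace} P(\zeta)$, the norm identity $\|h\|_{1}=\tfrac{1}{2\pi}\int_{\mathbb{T}}\operatorname{trace} P(\zeta)\,|d\zeta|$ holds directly from the definition of the $(L^{1})^{k\times k}$ norm.

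Next, I would invoke the matrix-valued outer factorization theorem (Helson-Lowdens\-lager/Wiener-Masani, in its co-outer variant) applied to the positive $(L^{1})^{k\times k}$ function $P$: since $P$ has constant generic rank $\ell$ on $\mathbb{T}$ and the log-integrability hypothesis is satisfied (a generically non-zero $\ell\times\ell$ minor of $h$ is a non-zero scalar function in $H^{1/\ell}$, hence in the Nevanlinna class), this produces $g\in(H^{2})^{k\times\ell}$ with $g^{\top}$ outer and $g(\zeta)g(\zeta)^{*}=P(\zeta)$ a.e.\ on $\mathbb{T}$.

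With $g$ so produced, I would define $f_{0}(\zeta):=g(\zeta)^{*}V(\zeta)$ on the boundary. Multiplication then gives $g\cdot f_{0}=gg^{*}V=PV=h$ a.e., while the computation
\begin{equation*}
\operatorname{trace}(f_{0}^{*}f_{0}) = \operatorname{trace}(V^{*}gg^{*}V) = \operatorname{trace}(P\cdot VV^{*}) = \operatorname{trace} P,
\end{equation*}
using that $VV^{*}$ is the projection onto $\operatorname{ran} P$ and is therefore absorbed by $P$, places $f_{0}\in(L^{2})^{\ell\times k}$, delivers the asserted norm equalities, and (together with the parallel calculation $\|g\|_{2}^{2}=\int\operatorname{trace}(gg^{*})=\int\operatorname{trace} P=\|h\|_{1}$) matches both $\|g\|_{2}$ and $\|f_{0}\|_{2}$ to the norm of $h$. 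It remains to upgrade $f_{0}$ from $L^{2}$ to $H^{2}$: because $g^{\top}$ is outer, $g$ admits a left inverse of Nevanlinna (bounded) type, so the relation $gf_{0}=h\in(H^{1})^{k\times k}$ realizes $f_{0}$ as the product of a Nevanlinna-class matrix function and $h$, hence $f_{0}$ is of bounded type; together with $f_{0}\in(L^{2})^{\ell\times k}$, this forces $f_{0}\in(H^{2})^{\ell\times k}$.

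The principal obstacle is the clean invocation of the matrix outer factorization in the possibly rank-deficient case, together with the verification of its log-integrability hypothesis from $h\in(H^{1})^{k\times k}$; both steps are standard but require care with minors of matrix $H^{1}$ functions, and one may alternatively quote the packaged matrix Riesz factorization theorem directly from a standard reference.
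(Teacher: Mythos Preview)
Your argument is correct, but it proceeds along a genuinely different path from the paper's. The paper first takes the inner--outer factorization $h^{\top}=h_{i}^{\top}h_{o}^{\top}$ (so $h=h_{o}h_{i}$ with $h_{o}^{\top}$ outer and $h_{i}$ inner), and then invokes Sarason's operator-valued square-root factorization to split the outer factor as $h_{o}=g\,\widetilde f_{0}$ with $\widetilde f_{0}^{*}\widetilde f_{0}=(h_{o}^{*}h_{o})^{1/2}$ and $g^{*}g=\widetilde f_{0}\widetilde f_{0}^{*}$ on $\mathbb{T}$; finally it sets $f_{0}=\widetilde f_{0}h_{i}$. Thus the paper never leaves the analytic category: the inner factor is isolated first, and the norm-balanced splitting is applied only to the outer part, so no after-the-fact analyticity argument is needed.

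Your route instead works pointwise on the boundary via the polar decomposition $h=PV$, applies a Wiener--Masani/Helson--Lowdenslager spectral factorization to the positive kernel $P=(hh^{*})^{1/2}$ to manufacture $g$ with $gg^{*}=P$ and $g^{\top}$ outer, and then recovers $f_{0}=g^{*}V$. This is more hands-on and arguably more transparent about where the norm identity $\|g\|_{2}^{2}=\|f_{0}\|_{2}^{2}=\|h\|_{1}$ comes from, but it costs you two extra steps the paper avoids: checking the log-integrability hypothesis for the (possibly rank-deficient) spectral factorization, and the bounded-type argument at the end to promote $f_{0}$ from $L^{2}$ to $H^{2}$. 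You handle both honestly; just be aware that the paper sidesteps them entirely by peeling off the inner part first and citing Sarason for the rest.
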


\begin{proof}
    It is well known (see \cite{NF}) that $h^{\top}$
       has an inner-outer factorization $h^{\top} = h_{i}^{\top}
       h_{o}^{\top}$ for a ${k \times \ell}$ inner function
       $h_{i}$ and an outer function $h_{o}^{\top}$ in
       $(H^{1})^{\ell \times k}$.  We next factor $h_{o}$ as
       $$ h_{o} = g \widetilde f_{0}
       $$
       where $g \in (H^{2})^{k \times \ell}$, $\widetilde f_{0}
       \in (H^{2})^{\ell \times k}$ and
       $$
       \widetilde f_{0}^{*} \widetilde f_{0} = (h_{o}^{*} h_{o})^{1/2}, \quad
       g^{*}g = \widetilde f_{0} \widetilde f_{0}^{*} \text{ on }
       {\mathbb T}
       $$
       (see \cite[Theorem 4]{Sarason} where the operator-valued
       version is done).
       Then $h = g f_{0}$ with $f_{0} = \widetilde f_{0} h_{i}$
        has the requisite properties.
    \end{proof}

    \subsection{Representation spaces for $(H^{\infty}_{1})^{k \times
    k}$ and proof of necessity}

     We fix a positive integer $k$.  Let $1 \le \ell  \le \ell' \le k$.
     We define ${\mathbb G}(\ell' \times \ell)$ as in the introduction:
     \begin{equation}  \label{G(ell)}
         {\mathbb G}(\ell' \times \ell) = \{ (\alpha, \beta) \colon
\alpha, \beta \in
         {\mathbb C}^{\ell' \times \ell},\ \  \alpha \alpha^{*} +
         \beta \beta^{*} = I_{\ell'} \text{ and } \operatorname{ker}
         \alpha =\{0\}\}.
     \end{equation}
     For $(\alpha, \beta) \in {\mathbb G}(\ell' \times \ell)$ let
     $H^{2}_{\alpha, \beta}$ be the subspace
     \begin{equation}  \label{H2ell}
     H^{2}_{\alpha, \beta} = \{ F \in (H^{2})^{k \times \ell}
     \colon \text{ for some } U \in {\mathbb C}^{k \times \ell'}, \,
      F(0) = U \alpha \text{ and } F'(0) = U \beta\}
     \end{equation}
     of $(H^{2})^{k \times \ell}$. Then $H^{2}_{\alpha, \beta}$ is a
     Hilbert space in its own right with norm taken to be the restriction
      of the norm of $(H^{2})^{k \times \ell}$.
     These spaces turn out to be models for representations of the algebra
     $(H^{\infty}_{1})^{k \times k}$.

     \begin{proposition}  \label{P:1}
     Let $(\alpha, \beta) \in {\mathbb G}(\ell' \times \ell)$. If $S \in
      (H^{\infty}_{1})^{k \times k}$ and $f \in H^{2}_{\alpha, \beta}$,
      then also $S\cdot f \in H^{2}_{\alpha, \beta}$.
     Moreover, the multiplication operator $M_S$ of $S$ on
     $H^{2}_{\alpha, \beta}$ satisfies
      \begin{equation}  \label{mult-norm}
       \| M_{S}\|_{op} = \| S \|_{\infty}.
      \end{equation}
      \end{proposition}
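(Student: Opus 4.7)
The plan is to dispose of invariance by a direct calculation at the origin, and then establish the norm equality via matching upper and lower bounds.

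For invariance, suppose $S \in (H^{\infty}_{1})^{k \times k}$ and $f \in H^{2}_{\alpha, \beta}$ with $f(0) = U\alpha$ and $f'(0) = U\beta$ for some $U \in \C^{k \times \ell'}$. Differentiating $Sf$ at zero and using $S'(0) = 0$ gives $(Sf)(0) = S(0) U \alpha$ and $(Sf)'(0) = S'(0) U \alpha + S(0) U \beta = S(0) U \beta$. Setting $\widetilde U = S(0) U \in \C^{k \times \ell'}$, the pair $((Sf)(0), (Sf)'(0)) = (\widetilde U \alpha, \widetilde U \beta)$ verifies the defining condition in \eqref{H2ell}, so $Sf \in H^{2}_{\alpha, \beta}$. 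This is precisely the step where the defining constraint $S'(0) = 0$ is essential.

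For the upper bound $\|M_{S}\|_{op} \le \|S\|_{\infty}$, I invoke the standard fact that multiplication by $S$ is bounded on $(H^{2})^{k \times \ell}$ (acting column-wise) with operator norm equal to $\|S\|_{\infty}$; since the norm on $H^{2}_{\alpha, \beta}$ is the restriction of the ambient $(H^{2})^{k \times \ell}$-norm and $H^{2}_{\alpha, \beta}$ is $M_{S}$-invariant by the previous step, the restricted operator inherits this bound.

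For the lower bound $\|M_{S}\|_{op} \ge \|S\|_{\infty}$, I exhibit a subspace of $H^{2}_{\alpha, \beta}$ on which $M_{S}$ acts isometrically like the ambient multiplication operator. The key observation is that $z^{2} \cdot (H^{2})^{k \times \ell} \subset H^{2}_{\alpha, \beta}$, corresponding to the trivial choice $U = 0$ in \eqref{H2ell}. For any nonzero $g \in (H^{2})^{k \times \ell}$, the element $f = z^{2} g$ lies in $H^{2}_{\alpha, \beta}$ and satisfies $\|f\|_{2} = \|g\|_{2}$ as well as $\|Sf\|_{2} = \|z^{2} Sg\|_{2} = \|Sg\|_{2}$, so taking the supremum of $\|Sf\|_{2}/\|f\|_{2}$ over this subfamily already recovers $\|S\|_{\infty}$. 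I do not anticipate a significant obstacle here: the invariance calculation is built into the way the spaces are defined, and multiplication by $z^{2}$ supplies an isometric embedding of $(H^{2})^{k \times \ell}$ into $H^{2}_{\alpha, \beta}$ that transports the norm equality from the ambient vector-valued multiplier theory.
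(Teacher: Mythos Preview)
Your proof is correct and follows essentially the same route as the paper: the invariance is checked by the same first-order computation at the origin (the paper phrases it via the expansion $S(z)=S_0+z^2\widetilde S(z)$, you via the product rule with $S'(0)=0$), and the norm equality is obtained exactly as in the paper by combining the trivial upper bound with the lower bound coming from the isometric embedding $g\mapsto z^2 g$ of $(H^2)^{k\times\ell}$ into $H^2_{\alpha,\beta}$.
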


      \begin{proof}
    If $S(z) = S_{0} + z^{2} \widetilde S(z)
     \in (H^{\infty}_{1})^{k \times k}$ with $\widetilde S \in
     (H^{\infty})^{k \times k}$, then
     $$
     S_{0} + z^{2} \widetilde S(z)) (U \alpha + z U \beta +
     O(z^{2})) = (S_{0} U) \alpha + z(S_{0}U) \beta + O(z^{2}) \in
     H^{2}_{\alpha, \beta}
     $$
     for any $U \alpha + z U \beta + O(z^{2}) \in H^{2}_{\alpha,
     \beta}$.

     We temporarily write $\widetilde M_S$ for the multiplication operator
     of $S$ on $(H^2)^{k\times\ell}$. For $\ell=1$ it is well known that $\widetilde M_S$
     satisfies $\|\widetilde M_S\|_{op}=\|S\|_\infty$ (see e.g.~\cite{NF}), and it is not
     difficult to see that this equality holds for the case $\ell\not=1$ as well. We write
     $M_S$ for the multiplication operator of $S$ as an
     operator on $H^{2}_{\alpha, \beta}$. Since
     the norm of $H^{2}_{\alpha, \beta}$ is just the restriction of the
     norm of $(H^2)^{k\times\ell}$, the inequality $\|M_S\|_{op}\leq\|S\|_\infty$ is
     immediate. Choose $f_n\in (H^2)^{k\times\ell}$ of norm 1 so that
     $\|\widetilde M_S f_n\|\to\|S\|_\infty$. Then the functions $g_n(z):=z^2f_n(z)$
     belong to $H^{2}_{\alpha, \beta}$, and $\|M_Sg_n\|_{H^{2}_{\alpha, \beta}}
     =\|\widetilde M_S f_n\|_{(H^2)^{k\times\ell}}\to\|S\|_\infty$,
     which proves that $\|M_S\|_{op}= \|S\|_\infty$.
\end{proof}

    \begin{remark} {\em In case $\alpha$ is invertible, it can be shown
        that $(H^{\infty}_{1})^{k \times k}$ is {\em exactly} the left
        multiplier space for $H^{2}_{\alpha, \beta}$ (see
        \cite[Proposition 3.1]{DPRS} for the case $k=1$).  In the proof
        of Theorem \ref{T:1} to follow, what comes up is the case
        $\alpha$ injective.  As the reproducing kernels $K^{\alpha, \beta}$
        in (\ref{RK}) can be approximated by reproducing
        kernels $K^{\alpha', \beta'}$ with $\alpha'$
        injective, Theorem \ref{T:1} remains valid if one restricts to
        $(\alpha, \beta)$ with $\alpha$ injective.
        } \end{remark}

        \begin{proposition} \label{P:2}
        For $(\alpha, \beta) \in {\mathbb G}(\ell' \times \ell)$, the
space
       $H^{2}_{\alpha, \beta}$ is a reproducing kernel
       Hilbert space with reproducing kernel $K^{\alpha, \beta}$ given
      by \eqref{RK} and  having the reproducing property
      \begin{equation}  \label{RKprop}
      \langle f(w), X \rangle_{\cC^{2, k \times \ell}} =
      \langle f, X K^{\alpha, \beta}(\cdot, w)
      \rangle_{H^{2}_{\alpha, \beta}}.
      \end{equation}
      Moreover, for $F \in (H^{\infty}_{1})^{k \times k}$, the operator
$M_{F} \colon
      f \mapsto F \cdot f$ of multiplication on the left by $F$ takes
      $H^{2}_{\alpha, \beta}$ into itself and the action of
      $M_{F}^{*}$ on kernel elements $X K^{\ell, (\alpha, \beta)}(w,
      \cdot)$ is given by
      \begin{equation}  \label{MF*kernel}
     M_{F}^{*} X K^{\alpha, \beta}(\cdot, w) = F(w)^{*} X
     K^{\alpha, \beta}(\cdot, w).
      \end{equation}
      \end{proposition}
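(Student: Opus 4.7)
The plan is to first establish that $H^{2}_{\alpha,\beta}$ is a \emph{closed} subspace of $(H^{2})^{k\times\ell}$. The defining conditions $F(0)=U\alpha$, $F'(0)=U\beta$ for some $U$ amount to requiring that the ordered pair $(F(0),F'(0))$ lie in the finite-dimensional subspace $L=\{(U\alpha,U\beta):U\in\mathbb{C}^{k\times\ell'}\}$ of $\mathbb{C}^{k\times\ell}\oplus\mathbb{C}^{k\times\ell}$; hence $H^{2}_{\alpha,\beta}$ is the preimage of a closed subspace under the bounded map $F\mapsto(F(0),F'(0))$. Thus $H^{2}_{\alpha,\beta}$ inherits a Hilbert-space structure from the restricted norm, and the standard recipe applies: its reproducing kernel is the image of the ambient reproducing kernel under the orthogonal projection onto $H^{2}_{\alpha,\beta}$.

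Second, I would recall (or verify directly) that $(H^{2})^{k\times\ell}$ with the trace inner product has reproducing kernel $\frac{1}{1-z\overline{w}}I_{\ell}$, in the sense that $\langle f, X\cdot \tfrac{1}{1-z\overline{w}}I_{\ell}\rangle=\langle f(w),X\rangle_{\mathcal{C}^{2,k\times\ell}}$ for every $f\in(H^{2})^{k\times\ell}$ and $X\in\mathbb{C}^{k\times\ell}$. Decomposing
\[
\frac{X}{1-z\overline{w}} \;=\; X + z\overline{w}X + \frac{z^{2}\overline{w}^{2}}{1-z\overline{w}}X
\]
and noting $H^{2}_{\alpha,\beta}=L\oplus z^{2}(H^{2})^{k\times\ell}$, the third summand already lies inside $z^{2}(H^{2})^{k\times\ell}$, so the projection reduces to computing the orthogonal projection of the pair $(X,\overline{w}X)$ onto $L$.

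Third, I would carry out this finite-dimensional step. A cyclic-trace computation shows that the orthogonal complement of $L$ in $\mathbb{C}^{k\times\ell}\oplus\mathbb{C}^{k\times\ell}$ equals $\{(A,B):A\alpha^{*}+B\beta^{*}=0\}$. Writing $(X,Y)=(U\alpha,U\beta)+(A,B)$ with $(A,B)\in L^{\perp}$ and invoking the coisometric identity $\alpha\alpha^{*}+\beta\beta^{*}=I_{\ell'}$ gives instantly $U=X\alpha^{*}+Y\beta^{*}$. Applied to $(X,\overline{w}X)$ this yields $U=X(\alpha^{*}+\overline{w}\beta^{*})$, and the $L$-projection of $X+z\overline{w}X$ is
\[
X(\alpha^{*}+\overline{w}\beta^{*})\alpha + zX(\alpha^{*}+\overline{w}\beta^{*})\beta \;=\; X(\alpha^{*}+\overline{w}\beta^{*})(\alpha+z\beta).
\]
Adding back the high-order remainder produces precisely $XK^{\alpha,\beta}(z,w)$ in the form \eqref{RK}, and the reproducing identity \eqref{RKprop} follows.

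For the multiplier statement, Proposition \ref{P:1} already furnishes that $M_{F}$ preserves $H^{2}_{\alpha,\beta}$, so it has an adjoint there, and for any $g\in H^{2}_{\alpha,\beta}$ the chain
\[
\langle g, M_{F}^{*} X K^{\alpha,\beta}(\cdot,w)\rangle = \langle Fg, X K^{\alpha,\beta}(\cdot,w)\rangle = \langle F(w)g(w),X\rangle = \langle g(w), F(w)^{*}X\rangle = \langle g, F(w)^{*}X K^{\alpha,\beta}(\cdot,w)\rangle,
\]
where the outer equalities invoke \eqref{RKprop}, delivers \eqref{MF*kernel} since $g$ is arbitrary. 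The only real obstacle is the bookkeeping in the projection onto $L$: one must keep $X$ acting on the left while the parameters $\alpha,\beta$ act on the right, and it is precisely the coisometric relation $\alpha\alpha^{*}+\beta\beta^{*}=I_{\ell'}$ that lets the projection close up into the clean product form appearing in \eqref{RK}.
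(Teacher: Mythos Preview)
Your argument is correct. The paper's proof is more direct: it simply takes a generic $f(z)=U\alpha+zU\beta+z^{2}\widetilde f(z)\in H^{2}_{\alpha,\beta}$ and computes $\langle f,\,XK^{\alpha,\beta}(\cdot,w)\rangle$ by splitting the inner product into the degree~$\le 1$ piece against $X(\alpha^{*}+\overline{w}\beta^{*})(\alpha+z\beta)$ and the $z^{2}$-tail against $X\frac{\overline{w}^{2}z^{2}}{1-\overline{w}z}$, invoking $\alpha\alpha^{*}+\beta\beta^{*}=I_{\ell'}$ to collapse the first piece to $\langle U(\alpha+w\beta),X\rangle$. Your route instead \emph{derives} $K^{\alpha,\beta}$ as the orthogonal projection of the ambient Szeg\H{o} kernel $X/(1-z\overline{w})$ onto $H^{2}_{\alpha,\beta}$, reducing the question to a finite-dimensional projection onto $L$ and solving that via $L^{\perp}=\{(A,B):A\alpha^{*}+B\beta^{*}=0\}$. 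Both proofs hinge on the same coisometric identity at the decisive step; the paper's verification is shorter, while yours explains where the formula \eqref{RK} comes from and would adapt more readily to variants of the constraint. The treatment of \eqref{MF*kernel} is identical in the two arguments.
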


      \begin{proof}  Let $f(z) = U \alpha + z U \beta + z^{2} \widetilde
     f(z)$ be a generic element of $H^{2}_{\alpha, \beta}$.
     By direct computation we have, for $X \in {\mathbb C}^{k
     \times \ell}$,
     \begin{align*}
       &  \langle f, X K^{\alpha, \beta}(\cdot,w)
         \rangle_{H^{2}_{\alpha, \beta}}  =
         \langle U\alpha + z U \beta + z^{2} \widetilde f(z), X
         K^{\alpha, \beta}(\cdot, w)
         \rangle_{H^{2}_{\alpha, \beta}} \\
         &\qquad  = \langle U \alpha + z U \beta, X (\alpha^{*} +
         \overline{w} \beta^{*}) (\alpha + z \beta)
         \rangle_{H^{2}_{\alpha, \beta}}  +
         \left\langle z^{2} \widetilde f(z), X \frac{ \overline{w}^{2}
         z^{2}}{1 - \overline{w} z} \right\rangle_{H^{2}_{\alpha,
         \beta}}  \\
         & \qquad =\operatorname{trace}\left[ (\alpha + w \beta)
         X^{*} U (\alpha \alpha^{*} + \beta \beta^{*}) \right] +
         \langle w^{2} \widetilde f(w), X \rangle_{\cC^{2, k \times
         \ell}} \\
         & \qquad = \langle U(\alpha + w \beta), X
         \rangle_{\cC^{2,k \times \ell}} + \langle w^{2} \widetilde
         f(w), X \rangle_{\cC^{2,k \times \ell}} \\
         & \qquad = \langle f(w), X \rangle_{\cC^{2, k \times \ell}}
      \end{align*}
      in agreement with \eqref{RKprop} as wanted. We next compute, for $f
\in H^{2}_{\alpha, \beta}$, $F \in
      (H^{\infty}_{1})^{k \times k}$ and $X \in {\mathbb C}^{k \times
      \ell}$,
      \begin{align*}
      \langle f, M_{F}^{*} X K^{\alpha, \beta}( \cdot, w)
     \rangle_{H^{2}_{\alpha, \beta}}& =
     \langle M_{F} f, X K^{\alpha, \beta}( \cdot, w)
     \rangle_{H^{2}_{\alpha, \beta}} \\
     &  = \langle F(w) f(w), X \rangle_{\cC^{2, k \times
     \ell}} \\
     & = \langle f(w), F(w)^{*} X \rangle_{\cC^{2, k \times
     \ell}} \\
     & = \langle f, F(w)^{*} X K^{\alpha, \beta}(\cdot, w)
     \rangle_{H^{2}_{\alpha, \beta}}
        \end{align*}
        and thus \eqref{MF*kernel} follows as well.
        \end{proof}

\begin{proof}[Proof of necessity in Theorem \ref{T:1}]
       Suppose that there exists a function $S$ in the constrained
       Schur class
       $(\cS_{1})^{k \times k}$ satisfying interpolation conditions
       \eqref{op-int} for given $n$ distinct nonzero points $z_{1},
       \dots, z_{n}$ in ${\mathbb D}$ and $k \times k$
       matrices $W_{1}, \dots, W_{n}$. Given
      $(\alpha, \beta)\in{\mathbb G}(\ell' \times \ell)$
     for some $\ell',\ell$
     ($1 \le \ell \le \ell' \le k$), define a subspace $\cM$ of $H^{2}_{
     \alpha, \beta}$ by
     $$
     \cM = \overline{\operatorname{span}} \left\{ X K^{
     \alpha, \beta}(\cdot, z_{j}) \colon X \in \cC^{2, k \times
     \ell}, \, j = 1, \dots, n \right\}.
     $$
     {}From \eqref{MF*kernel} we see that $\cM$ is invariant under
     $M_{S}^{*}$.  {}From the assumption that $\| S \|_{\infty} \le
     1$, we know by \eqref{mult-norm} that $\left\|
     M_{F}^{*}|_{\cM} \right\| \le 1$.  Hence, for a generic
     element
     \begin{equation}  \label{f}
         f = \sum_{j=1}^{n} X_{j} K^{\alpha, \beta}(\cdot, z_{j}) \in \cM,
      \end{equation}
      necessarily
      $$
        \|f\|^{2} - \| M_{S}^{*} f \|^{2} \ge 0.
      $$
      Substituting \eqref{f} for $f$ into the latter inequality, expanding
out inner products and using the reproducing property \eqref{RKprop} and
the equality (\ref{MF*kernel}) then leaves us with
\eqref{op-ker-family}.
\end{proof}

     \subsection{Proof of sufficiency} The proof will follow
      the duality proof as in \cite{Sarason, DPRS} using
     the adaptations for the matrix-valued case as done in
     \cite{Ball79} in a different context.
          A key ideal in the
     algebra $(H^{\infty}_{1})^{k \times k}$ is the set
     of all functions $F \in (H^{\infty})^{k \times k}$
     which satisfy the homogeneous interpolation conditions
     associated with the data set ${\mathfrak D}$:
     \begin{equation}  \label{cID}
     \cI_{{\mathfrak D}} :
     = \{ F \in (H^{\infty}_{1})^{k \times
     k} \colon F(z_{i}) = 0 \; \; \text{for}\; \;  i=1, \dots, n\}.
     \end{equation}
     The first step is
     to compute the pre-annihilator of $\cI_{{\mathfrak D}}$.
     To this end we introduce the dual version of ${\mathbb
     G}(\ell' \times \ell)$:
     $$
     {\mathbb G}(\ell' \times \ell)^{*} = \{ (a,b) \colon a,b \in {\mathbb
     C}^{\ell \times \ell'}, \, a \text{ onto}, \, a^{*}a +
     b^{*} b  = I_{\ell'}\}.
     $$
     For $(a,b)\in{\mathbb G}(\ell' \times \ell)^{*}$
     we then define associated spaces
     \begin{align*}
     H^{1}_{r,(a,b)}&  = \{ h \in (H^{1})^{ \ell \times k} \colon
     \text{for some } U \in {\mathbb C}^{\ell \times k}, \,
     h(0) = aU \text{ and } h'(0) = bU\}, \\
     H^{\infty}_{r, (a,b)} & = \{ F \in (H^{\infty})^{ \ell \times k}
\colon
     \text{for some } U \in {\mathbb C}^{\ell \times k}, \,
     F(0) = aU \text{ and } F'(0) = bU\}, \\
     H^{2}_{r,(a,b)} & = \{ f \in (H^{2})^{ \ell \times k} \colon
     \text{for some } U \in {\mathbb C}^{\ell \times k}, \,
     f(0) = aU \text{ and } f'(0) = bU\}.
     \end{align*}

     \begin{proposition}  \label{P:3}
         There exists a pair $(a,b) \in {\mathbb G}(k \times k)^{*}$ so
      that the subspace $\cI_{{\mathfrak D}}$
      given by \eqref{cID} can be expressed as
       $$
       \cI_{{\mathfrak D}} = B_{{\mathfrak D}} H^{\infty}_{r,
(a,b)}\quad\mbox{where}\quad
     B_{{\mathfrak D}}(z) = \prod_{k=1}^{n} \frac{z-z_{k}}{1 -
     \overline{z_{k}} z}  I_{k}
     $$
is the scalar Blaschke product with zeros equal to the
interpolation nodes $z_{1}, \dots,
     z_{n}$ given by the data set \eqref{op-data}.
     \end{proposition}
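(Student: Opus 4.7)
The plan is to derive $\cI_{\mathfrak D} = B_{\mathfrak D} H^\infty_{r,(a,b)}$ by translating the two constraints defining $\cI_{\mathfrak D}$ on $F$---vanishing at $z_1,\dots,z_n$ and vanishing of $F'(0)$---into equivalent constraints on the quotient $G := F/B_{\mathfrak D}$. Because $B_{\mathfrak D}$ is a scalar Blaschke product of degree $n$ whose zeros are precisely the distinct nonzero points $z_1,\dots,z_n$ and whose denominators are outer (nonvanishing on the closed disk), the matrix condition $F(z_i)=0$ for each $i$ is exactly equivalent to the existence of a (unique) factorization $F = B_{\mathfrak D} G$ with $G \in (H^\infty)^{k\times k}$.

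The second step is to read off the effect of the derivative constraint $F'(0)=0$ on $G$. Writing $B_0 = B_{\mathfrak D}(0)$ and $B_1 = B_{\mathfrak D}'(0)$ (scalars, with $B_0 \neq 0$ since no $z_i = 0$), comparison of Taylor coefficients in $F = B_{\mathfrak D} G$ yields $F(0) = B_0 G(0)$ and $F'(0) = B_1 G(0) + B_0 G'(0)$. Hence $F'(0)=0$ is equivalent to the single linear relation $G'(0) = c\,G(0)$, with $c := -B_1/B_0 \in \C$.

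It then remains to repackage this linear relation as membership in $H^\infty_{r,(a,b)}$ for a suitable $(a,b) \in {\mathbb G}(k \times k)^*$. Set $\mu = (1+|c|^2)^{-1/2}$ and $\nu = c\mu$, and take $a = \mu I_k$, $b = \nu I_k$; then $a^*a + b^*b = (|\mu|^2 + |\nu|^2)I_k = I_k$ and $a$ is invertible, so $(a,b) \in {\mathbb G}(k\times k)^*$. Given $G$ with $G'(0) = c\,G(0)$, the choice $U := \mu^{-1} G(0) \in \C^{k \times k}$ gives $G(0) = aU$ and $G'(0) = c a U = bU$; conversely any $G \in H^\infty_{r,(a,b)}$ automatically satisfies $G'(0) = bU = c(aU) = c\,G(0)$. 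Combined with the first step, the map $F \mapsto G = F/B_{\mathfrak D}$ gives the desired bijection between $\cI_{\mathfrak D}$ and $H^\infty_{r,(a,b)}$.

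There is no deep obstacle: the argument is divide-by-the-Blaschke-product plus bookkeeping on the first two Taylor coefficients. The only modest subtlety is the normalization step---to land inside ${\mathbb G}(k \times k)^*$ one must absorb the scalar $c$ into a pair $(a,b)$ obeying $a^*a + b^*b = I_k$ with $a$ surjective, which forces the scaling $\mu = (1+|c|^2)^{-1/2}$ above.
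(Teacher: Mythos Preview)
Your argument is correct and follows essentially the same route as the paper: factor out $B_{\mathfrak D}$, translate $F'(0)=0$ into the linear relation $G'(0)=cG(0)$ with $c=-B_{\mathfrak D}'(0)/B_{\mathfrak D}(0)$, and then normalize $(I_k,cI_k)$ by $(1+|c|^2)^{-1/2}$ to obtain a pair in ${\mathbb G}(k\times k)^*$. The only cosmetic difference is that the paper writes the normalization in matrix form (allowing in principle for a non-scalar $B_{\mathfrak D}$, as noted in the subsequent remark), whereas you exploit from the outset that $B_{\mathfrak D}$ is scalar.
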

     \begin{proof}
        By definition \eqref{cID} we have $\cI_{{\mathfrak D}} =
        (H^{\infty}_{1})^{k \times k} \cap B_{{\mathfrak D}}
        (H^{\infty})^{k \times k}$.
        Then $F \in \cI_{{\mathfrak D}}$ means that $F$ has the
        form $F = B_{{\mathfrak D}} G $ for a $G \in
        (H^{\infty})^{k \times k}$ with the additional property
        that
        $$
        F'(0) = B_{{\mathfrak D}}(0) G'(0) + B_{{\mathfrak D}}'(0)
        G(0) = 0.
        $$
        By assumption no $z_i$ is equal to $0$ and hence
        $B_{{\mathfrak D}}(0)$ is invertible.  Thus
        we can solve for $G'(0)$ in terms of $G(0)$:
        $$
        G'(0) = - B_{{\mathfrak D}}(0)^{-1} B_{{\mathfrak D}}'(0)
        G(0)
        $$
        or
        $$
        G(0) = \widetilde a \widetilde U, \quad G'(0) = \widetilde
        b \widetilde U
        $$
        where
        $$
        \widetilde a = I_{k}, \quad \widetilde b = -B_{{\mathfrak
        D}}(0)^{-1} B_{{\mathfrak D}}'(0), \quad \widetilde U =
        G(0).
        $$
        If we normalize $(\widetilde a, \widetilde b)$ to $(a,b)$
        according to the formula
       \begin{align}
           & a = \widetilde a (I + B_{{\mathfrak D}}'(0)^{*}
           B_{{\mathfrak D}}(0)^{*-1} B_{{\mathfrak D}}(0)^{-1}
           B_{{\mathfrak D}}'(0))^{-1/2}, \notag \\
           & b = \widetilde b (I + B_{{\mathfrak D}}'(0)^{*}
           B_{{\mathfrak D}}(0)^{*-1} B_{{\mathfrak D}}(0)^{-1}
           B_{{\mathfrak D}}'(0))^{-1/2}, \notag \\
        & U = (I + B_{{\mathfrak D}}'(0)^{*}
            B_{{\mathfrak D}}(0)^{*-1} B_{{\mathfrak D}}(0)^{-1}
           B_{{\mathfrak D}}'(0))^{1/2}\widetilde U,
           \label{cID-ab}
          \end{align}
        then $(a,b) \in {\mathbb G}(k\times k)^{*}$ and
        $$
          G(0) = aU, \quad G'(0)= b U
         $$
         shows that $G \in H^{\infty}_{r, (a,b)}$. Note that the
         $a$ and $b$ constructed in (\ref{cID-ab}) are independent
         of the function $F\in\cI_{{\mathfrak D}}$, so that this
         choice of $a$ and $b$ works for any
         $F\in\cI_{{\mathfrak D}}$.
      \end{proof}

       \begin{remark} \label{R:cID}  {\em For our setting here
           $B_{{\mathfrak D}}(z)$ is a scalar Blaschke product so
           the $(a,b)$ produced by the recipe \eqref{cID-ab} are
           actually scalar $k \times k$ matrices.  The proof of
           the lemma applies more generally to the setting where
           $B_{{\mathfrak D}}$ is not scalar, but we still assume
           that $B_{{\mathfrak D}}(0)$ is invertible.  Due to this
           observation, the analysis here applies equally well
           should we wish to study left-tangential interpolation
           problems in the class $(H^{\infty}_{1})^{k \times k}$
           rather than just full-matrix-valued interpolation as in
           \eqref{op-int}.
           } \end{remark}

       We next compute a pre-annihilator of $\cI_{{\mathfrak D}}$. Here we use the notation
       $\ker_{\ell} X$ and $\operatorname{im}_{\ell} X$ for a matrix
       $X$ to indicate the subspace of row vectors arising as the
       kernel (respectively image) of $X$ when $X$ is considered as an
       operator with   row-vector argument acting on the left, i.e.,
       for $X \in {\mathbb C}^{k \times k'}$,
       \begin{align*}
     &  \ker_{\ell} X = \{ v \in {\mathbb C}^{1 \times k} \colon v
       X = 0\}, \\
     &  \operatorname{im}_{\ell} X = \{ u \in {\mathbb C}^{1
       \times k'} \colon u = v X \text{ for some } v \in {\mathbb
       C}^{1 \times k}\}.
    \end{align*}

       \begin{proposition}  \label{P:4} Suppose that
           $\cI_{{\mathfrak D}} = (H^{\infty}_{1})^{k \times k}
           \cap B_{{\mathfrak D}} (H^{\infty})^{k \times k}$ is
           expressed as $B_{{\mathfrak D}} H^{\infty}_{r, (a,b)}$ as in
           Proposition \ref{P:3}.  Then the pre-annihilator
           $(\cI_{{\mathfrak D}})_{\perp}$ of $\cI_{{\mathfrak
           D}}$ is given by
           \begin{equation}  \label{cIDperp}
           (\cI_{{\mathfrak D}})_{\perp} = z^{-1} H^{1}_{
           a_{\perp}, b_{\perp}}B_{{\mathfrak D}}^{*}
       \end{equation}
       where $(a_{\perp}, b_{\perp}) \in {\mathbb G}(k \times k)$ is
chosen
       so that
       \begin{equation}  \label{aperpbperp}
       \operatorname{im}_{\ell}  \begin{bmatrix} b_{\perp} &
       a_{\perp}
       \end{bmatrix} = \operatorname{ker}_{\ell} \begin{bmatrix} a
       \\ b \end{bmatrix}.
       \end{equation}
       \end{proposition}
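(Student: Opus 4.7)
The plan is to compute $(\cI_{\mathfrak D})_\perp$ in three stages: first reduce to computing the pre-annihilator of $H^\infty_{r,(a,b)}$ by peeling off the scalar Blaschke factor, then identify that pre-annihilator via a Fourier decomposition, and finally rewrite the resulting constraint in the language of $H^1_{a_\perp,b_\perp}$. Since $B_{\mathfrak D}(z)I_k$ is a scalar multiple of the identity and hence commutes with all matrices, for $h \in (L^1)^{k\times k}$ and $G \in H^\infty_{r,(a,b)}$ one has $\operatorname{trace}[B_{\mathfrak D}(z)G(z)h(z)] = \operatorname{trace}[G(z)h(z)B_{\mathfrak D}(z)]$, giving the pairing identity $[B_{\mathfrak D}G,h] = [G,hB_{\mathfrak D}]$. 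Since $B_{\mathfrak D}^*B_{\mathfrak D} \equiv 1$ on $\mathbb T$, the map $h \mapsto hB_{\mathfrak D}$ is a bijection of $(L^1)^{k\times k}$ onto itself, so $(\cI_{\mathfrak D})_\perp = (H^\infty_{r,(a,b)})_\perp \cdot B_{\mathfrak D}^*$ and it suffices to prove $(H^\infty_{r,(a,b)})_\perp = z^{-1}H^1_{a_\perp,b_\perp}$.

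Next, any $F \in H^\infty_{r,(a,b)}$ admits a unique splitting $F(z) = aU + zbU + z^2\widetilde F(z)$ with $U = a^{-1}F(0) \in \mathbb C^{k\times k}$ and $\widetilde F$ ranging freely over $(H^\infty)^{k\times k}$. For $\tilde h \in (L^1)^{k\times k}$ with Fourier coefficients $\tilde h_m$ this yields
\[
[F,\tilde h] = \operatorname{trace}\bigl[aU\tilde h_0 + bU\tilde h_{-1}\bigr] + [\widetilde F,\,z^2\tilde h].
\]
Letting $\widetilde F$ vary and invoking \eqref{Hinf-preann} forces $z^2\tilde h \in z(H^1)^{k\times k}$, i.e.\ $\tilde h = z^{-1}h_0$ with $h_0 \in (H^1)^{k\times k}$, and then $\tilde h_0 = (h_0)_1$, $\tilde h_{-1} = (h_0)_0$. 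Using cyclic invariance of the trace, the remaining summand becomes $\operatorname{trace}\bigl[U\bigl((h_0)_1 a + (h_0)_0 b\bigr)\bigr]$, which vanishes for every $U$ if and only if $(h_0)_1 a + (h_0)_0 b = 0$.

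Finally, the equation $(h_0)_0 b + (h_0)_1 a = 0$ says exactly that every row of the $k \times 2k$ block matrix $[(h_0)_1,\,(h_0)_0]$ lies in $\ker_\ell \sbm{a\\ b}$, which by \eqref{aperpbperp} coincides with $\operatorname{im}_\ell[b_\perp,\,a_\perp]$. Writing the $i$-th row as $u_i[b_\perp,\,a_\perp]$ and stacking the $u_i$ into a matrix $U$ yields $(h_0)_1 = Ub_\perp$ and $(h_0)_0 = Ua_\perp$, which is the definition of $H^1_{a_\perp,b_\perp}$; the reverse implication follows immediately from the consequence $b_\perp a + a_\perp b = 0$ of \eqref{aperpbperp}. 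Combining the three stages gives $(\cI_{\mathfrak D})_\perp = z^{-1}H^1_{a_\perp,b_\perp}B_{\mathfrak D}^*$. The main bookkeeping hazard is the left/right convention mismatch: $H^\infty_{r,(a,b)}$ parametrizes elements by $F(0) = aU$ with $U$ on the right, while the dual space $H^1_{a_\perp,b_\perp}$ (analogous to $H^2_{\alpha,\beta}$ in \eqref{H2ell}) requires $h_0(0) = Ua_\perp$ with $U$ on the left, and the reversed ordering in $[b_\perp,\,a_\perp]$ versus $\sbm{a\\ b}$ must be tracked carefully so that \eqref{aperpbperp} produces the orthogonality in the right form.
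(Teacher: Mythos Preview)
Your proof is correct and follows essentially the same approach as the paper: both reduce via the Blaschke factor to a Fourier-coefficient computation yielding the orthogonality condition $(h_0)_1 a + (h_0)_0 b = 0$, and then interpret this via \eqref{aperpbperp} as membership in $H^1_{a_\perp,b_\perp}$. The only cosmetic difference is that you peel off $B_{\mathfrak D}$ at the outset using the identity $[B_{\mathfrak D}G,h]=[G,hB_{\mathfrak D}]$, whereas the paper carries $B_{\mathfrak D}$ through the pairing computation and removes it at the end.
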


       \begin{proof}
           Let $F(z) = B_{{\mathfrak D}}(z) (a U + z bU +z^{2}
           \widetilde F(z))$ be a generic element of
           $\cI_{{\mathfrak D}}$ and suppose that $h \in
           (L^{1})^{k \times k}$ is in the pre-annihilator
           $(\cI_{{\mathfrak D}})_{\perp}$.  Then
          $$
          [F,h]_{(L^{\infty})^{k \times k} \times (L^{1})^{k
          \times k}} = \frac{1}{2 \pi} \int_{{\mathbb T}}
          \operatorname{trace}\left[ B_{{\mathfrak D}}(z) (aU +
          zbU + z^{2} \widetilde F(z)) h(z) \right]\, |dz| = 0
          $$
   for all such $F$.  In particular
   $$
   \frac{1}{2 \pi} \int_{{\mathbb T}} \operatorname{trace} \left[
   B_{{\mathfrak D}}(z) z^{2} \widetilde F(z) h(z) \right]\, |dz| = 0
   $$
   for all $\widetilde F \in (H^{\infty})^{k \times k}$ from which we
   conclude that $z^{2} h(z) B_{{\mathfrak D}}(z) \in \left(
   (H^{\infty})^{k \times k} \right)_{\perp} = z(H^{1})^{k \times k}$.
   We may therefore write $h(z) = z^{-1} \widetilde h(z) B_{{\mathfrak
   D}}(z)^{*}$ where $\widetilde h(z) \in (H^{1})^{k \times k}$.  Write
   $$
   \widetilde h(z) = \widetilde h(0) + z \widetilde h'(0) + z^{2}
   \widetilde{\widetilde h}(z)\quad \text{with}\quad \widetilde{\widetilde
   h}\in (H^{1})^{k \times k}.
   $$
   Then $h \in (\cI_{{\mathfrak D}})_{\perp}$ forces in addition
   \begin{align*}
       0 & =     \left[B_{{\mathfrak D}}(z) (aU + z bU), \,
       (z^{-1}\widetilde h(0) + \widetilde h'(0) + z
       \widetilde{\widetilde h}(z)) B_{{\mathfrak D}}(z)^{*}
       \right]_{(L^{\infty})^{k \times k} \times (L^{1})^{k \times k}} \\
    & = \frac{1}{2 \pi} \int_{{\mathbb T}} \operatorname{trace} \left[
(aU + zbU) (z^{-1}
    \widetilde h(0) + \widetilde h'(0)) \right] \, |dz| \\
    & =  \operatorname{trace} \left[ aU \widetilde h'(0) + b U
    \widetilde h(0) \right] =
     \operatorname{trace} \left[ U
    (\widetilde h'(0) a + \widetilde h(0) b) \right]
    \end{align*}
    for all $k \times k$ matrices $U$.  As the analysis is reversible,
    we conclude that $h \in (\cI_{{\mathfrak D}})_{\perp}$ if and only
    if $h(z) = z^{-1} \widetilde h(z) B_{{\mathfrak D}}(z)^{*}$ where
    $\widetilde h(z) \in (H^{1})^{k \times k}$ is such that
    $$
      \begin{bmatrix} \widetilde h'(0)  & \widetilde h(0) \end{bmatrix}
     \begin{bmatrix} a \\ b \end{bmatrix} = 0.
    $$
    {}From the definitions, this is just the assertion that $h \in z^{-1}
    H^{1}_{a_{\perp}, b_{\perp}}$ where $(a_{\perp},
    b_{\perp})$ is constructed as in \eqref{aperpbperp}. It is also not
    difficult to see that injectivity (and hence invertibility since
    $a$ is square) of $a$ is then equivalent
    to surjectivity (and hence also invertibility) of $a_{\perp}$,
i.e., $(a,b) \in {\mathbb
    G}(k \times k)^{*}$ is equivalent to $(a_{\perp}, b_{\perp}) \in
{\mathbb
    G}(k \times k)$.
    \end{proof}

    \begin{proposition}  \label{P:5}
        Suppose that $h \in (H^{1})^{k \times k}$ is in $(\cI_{{\mathfrak
        D}})_{\perp}$.  Then there is an $(\alpha, \beta) \in
        {\mathbb G}(\ell',\ell)$ for some $1 \le \ell \le \ell' \le k$
        so that  $h$ can be factored in the form
        \begin{equation}  \label{fact}
        h(z) = g(z) f(z)^{*}
        \end{equation}
        where
        \begin{equation}  \label{factor-props}
       g \in H^{2}_{\alpha, \beta}, \quad
        f \in (L^{2})^{k \times \ell} \ominus \left( H^{2}_{\alpha,
        \beta} \cap
        B_{{\mathfrak D}} (H^{2})^{k \times \ell} \right), \quad
        \|g\|_{2}^{1/2}  = \|f\|_{2}^{1/2} = \|h\|_{1}.
        \end{equation}
        \end{proposition}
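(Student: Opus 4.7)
The plan is to combine Proposition \ref{P:4} with the matrix-valued Riesz factorization of Theorem \ref{T:Riesz}. By Proposition \ref{P:4} one may write $h(z)=z^{-1}\tilde h(z)B_{\mathfrak D}(z)^*$ with $\tilde h\in H^1_{a_\perp,b_\perp}$. Since $B_{\mathfrak D}$ is scalar (Remark \ref{R:cID}), unwinding \eqref{cID-ab} and \eqref{aperpbperp} gives $a_\perp=\rho I_k$, $b_\perp=\rho\mu I_k$ with $\mu:=B_{\mathfrak D}'(0)/B_{\mathfrak D}(0)$ and $\rho=(1+|\mu|^2)^{-1/2}$, so the membership condition for $\tilde h$ collapses to the single identity
\[
\tilde h'(0) = \mu\,\tilde h(0).
\]

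Next I would apply Theorem \ref{T:Riesz} to $\tilde h\in(H^1)^{k\times k}$, obtaining $g\in(H^2)^{k\times\ell}$ with $g^\top$ outer, $\tilde f_0\in(H^2)^{\ell\times k}$, the factorization $\tilde h=g\tilde f_0$, and the norm identities of Theorem \ref{T:Riesz}. Define
\[
f(z):=z\,B_{\mathfrak D}(z)\,\tilde f_0(z)^*,\qquad z\in\mathbb T.
\]
Since $B_{\mathfrak D}$ is a scalar Blaschke factor (so unimodular on $\mathbb T$) and $\tilde f_0^*\in(\overline{H^2})^{k\times\ell}$, one has $f\in(L^2)^{k\times\ell}$ and a pointwise computation gives $g(z)f(z)^*=z^{-1}g(z)\tilde f_0(z)B_{\mathfrak D}(z)^*=h(z)$ on $\mathbb T$. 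The same unimodularity yields $\|h\|_1=\|\tilde h\|_1$ and $\|f\|_2=\|\tilde f_0\|_2$, which together with the Riesz norm equalities produces the norm identities in \eqref{factor-props}.

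To supply $(\alpha,\beta)\in\mathbb G(\ell'\times\ell)$ with $g\in H^2_{\alpha,\beta}$, I would form a rank factorization $[g(0)\ g'(0)]=U[\alpha\ \beta]$ with $U\in\mathbb C^{k\times\ell'}$ of full column rank $\ell'$ and with $[\alpha\ \beta]\in\mathbb C^{\ell'\times 2\ell}$ normalized to be a coisometry (so $\alpha\alpha^*+\beta\beta^*=I_{\ell'}$); the injectivity requirement on $\alpha$ is handled either by a careful choice in the rank factorization or, as flagged after Proposition \ref{P:1}, by the density of the injective-$\alpha$ stratum of the Grassmannian. For the orthogonality, take $p=B_{\mathfrak D}q\in H^2_{\alpha,\beta}\cap B_{\mathfrak D}(H^2)^{k\times\ell}$; writing out $[p(0),p'(0)]=U'[\alpha\ \beta]$ expresses $q(0)$ and $q'(0)$ in terms of $U'$, $\alpha$, $\beta$, and $\mu$, and the identity $\tfrac{1}{2\pi}\int_{\mathbb T}\bar z F(z)|dz|=F'(0)$ for $F\in H^1$ (together with $B_{\mathfrak D}^*B_{\mathfrak D}=I$ on $\mathbb T$) reduces the vanishing of $\langle p,f\rangle_{(L^2)^{k\times\ell}}$ to the single relation
\[
\alpha\,\tilde f_0'(0)+(\beta-\mu\alpha)\,\tilde f_0(0)=0.
\]
This last relation is obtained by substituting $\tilde h=g\tilde f_0$ and $g(0)=U\alpha$, $g'(0)=U\beta$ into $\tilde h'(0)=\mu\tilde h(0)$ and then cancelling the left-injective factor $U$. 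The principal obstacle is exactly this cancellation: it forces $(\alpha,\beta)$ to be extracted from a rank factorization of $[g(0)\ g'(0)]$ so that $U$ has full column rank, rather than being chosen a priori, and it is the technical reason the statement ranges over $\mathbb G(\ell'\times\ell)$ instead of a fixed pair.
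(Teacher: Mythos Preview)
Your argument is correct and follows the same overall architecture as the paper: invoke Proposition~\ref{P:4}, apply the matrix Riesz factorization of Theorem~\ref{T:Riesz} to $\tilde h$, set $f=zB_{\mathfrak D}\tilde f_0^{*}$, and extract $(\alpha,\beta)$ from a rank factorization of $[\,g(0)\ g'(0)\,]$. The one place where you and the paper diverge is in the verification that $f\perp H^{2}_{\alpha,\beta}\cap B_{\mathfrak D}(H^{2})^{k\times\ell}$. The paper first proves the density claim $[\cI_{\mathfrak D}\,g]^{-}=H^{2}_{\alpha,\beta}\cap B_{\mathfrak D}(H^{2})^{k\times\ell}$ (using that $g^{\top}$ is outer) and then checks orthogonality only against elements $B_{\mathfrak D}Fg$ with $F\in H^{\infty}_{r,(a,b)}$, invoking the general $(a_{\perp},b_{\perp})$ mechanism of \eqref{aperpbperp}. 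You instead pair $f$ directly with an arbitrary $p=B_{\mathfrak D}q\in H^{2}_{\alpha,\beta}\cap B_{\mathfrak D}(H^{2})^{k\times\ell}$ and reduce the vanishing to the explicit scalar identity $\alpha\tilde f_0'(0)+(\beta-\mu\alpha)\tilde f_0(0)=0$. Your route is a bit more direct and bypasses the density step, at the cost of leaning on the scalar nature of $B_{\mathfrak D}$ (which the paper also acknowledges in Remark~\ref{R:cID} but does not exploit in the proof).

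Two small comments. First, the ``density of the injective-$\alpha$ stratum'' alternative you mention is not really available here: the proposition asks for a \emph{specific} $(\alpha,\beta)\in\mathbb G(\ell'\times\ell)$, not an approximation. Fortunately this is moot, since injectivity of $\alpha$ is automatic in any rank factorization $[\,g(0)\ g'(0)\,]=U[\,\alpha\ \beta\,]$ with $U$ of full column rank: $g^{\top}$ outer forces $\ker g(0)=\{0\}$, hence $\ker\alpha\subset\ker(U\alpha)=\ker g(0)=\{0\}$. Second, the cancellation of $U$ that you flag as the ``principal obstacle'' is exactly what is secured by taking $\ell'=\operatorname{rank}[\,g(0)\ g'(0)\,]$, and the paper makes the same move.
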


        \begin{proof}
       By Proposition \ref{P:4} we may write $h = z^{-1}
       \widetilde h B_{{\mathfrak D}}^{*}$ where $\widetilde h
       \in H^{1}_{a_{\perp}, b_{\perp}}$.  As $\widetilde h$ is in
       $(H^{1})^{k \times k}$, by Theorem \ref{T:Riesz}
       we may factor $\widetilde h$ as $\widetilde h = g \cdot f_{0}$,
       where $g \in (H^{2})^{k \times \ell}$, $f \in (H^{2})^{\ell
       \times k}$, $g^{\top}$ outer, and
       \begin{equation}\label{1}
       \|g\|_{2}^{1/2} = \| f_{0}\|_{2}^{1/2} = \| h \|_{1}.
\end{equation}
Combining  factorizations for $h$ and $\widetilde h$ gives
       $h = z^{-1} \widetilde h B_{{\mathfrak D}}^{*} =
       z^{-1} g f_{0}B_{{\mathfrak D}}^{*}  = g f^{*}$
       where we have set
       $$
         f = z B_{{\mathfrak D}} f_{0}^{*}
       $$
       and \eqref{fact} holds with $g \in H^{2}_{\alpha, \beta}$ and $f$
       so constructed. Since $\|f\|_2=\|f_0\|_2$, the norm
       equalities in the third part of \eqref{factor-props} follow from
       \eqref{1}. Since $g^{\top}$ is outer, $g(0)$ has trivial kernel.
        Let
        \begin{equation}\label{eq3}
\ell':={\rm  rank} \, (g(0) g(0)^{*} + g'(0) g'(0)^{*}).
\end{equation}
Assuming that
       $h \ne 0$, we then have $1 \le \ell \le \ell' \le k$.
       Then there exists an injective $k  \times \ell'$ matrix $X$
       which solves the factorization problem

       $$
       g(0) g(0)^{*} + g'(0) g'(0)^{*} = X X^{*}.
       $$
       Define $\ell' \times \ell$ matrices $\alpha,\beta$ by
       $$
       \alpha = (X^{*}X)^{-1} X^{*} g(0), \quad \beta =
       (X^{*}X)^{-1} X^{*} g'(0).
       $$
       Then one can check that $\alpha$ is injective and
       $\alpha \alpha^{*} + \beta \beta^{*} = I_{\ell'}$, i.e.,
       $(\alpha, \beta) \in {\mathbb G}(\ell' \times \ell)$.
       Moreover, (\ref{eq3}) implies that $\textup{Im}\, \begin{bmatrix} g(0) & g'(0)\end{bmatrix}=\textup{Im}\,X$,
       and thus $\begin{bmatrix} g(0) & g'(0)\end{bmatrix} = X \begin{bmatrix} \alpha & \beta \end{bmatrix}$.
       Hence $g \in H^{2}_{\alpha, \beta}$.

       It remains to verify that $f \in
(L^{2})^{k \times k}
       \ominus \left(H^{2}_{a,b} \cap B_{{\mathfrak D}} (H^{2})^{k
       \times \ell}\right)$.
To this end we observe first that
       $(H^{\infty}_{1})^{k  \times k} \cdot g$ is dense in $H^{2}_{a,b}$.
       Indeed, the $L^{2}$-closure of $(H^{\infty}_{1})^{k  \times k} \cdot g$
       satisfies
$$
[(H^{\infty}_{1})^{k \times k} g ]^{-}=
              {\mathbb C}^{k \times k} g +
         [z^{2} (H^{\infty})^{k \times k} g  ]^{-}= {\mathbb C}^{k \times
k} g + z^{2} (H^{2})^{k
              \times k} = H^{2}_{a,b}.
$$
The second identity follows because $g^\top$ is outer.
       Next we observe that
       \begin{equation} \label{claim1}
          [\cI_{{\mathfrak D}} g]^{-} =
           H^{2}_{a,b} \cap B_{{\mathfrak D}} (H^{2})^{k \times
           \ell},
        \end{equation}
        again since $g^{\top}$ is outer. Indeed, the containment
        $\subset$ is clear as the left-hand side is in
        $H^{2}_{a,b}$ and vanishes at the points $z_{1}, \dots,
        z_{d}$. Moreover, evidently both sides of \eqref{claim1}
        have codimension $n$ in $H^{2}_{a,b}$ and \eqref{claim1}
        follows.

        Hence, to check that $f$ is orthogonal to $H^{2}_{a,b}
        \cap B_{{\mathfrak D}} (H^{2})^{k \times k}$, it suffices
        to check that $f$ is orthogonal to $\cI_{{\mathfrak D}}
        g$.  As $\cI_{{\mathfrak D}} = B_{{\mathfrak D}}
        H^{\infty}_{r, (a,b)}$ by  Proposition \ref{P:3}, we
        conclude that it suffices to check that $f$ is orthogonal
        to  elements of the form
        $\phi:= B_{{\mathfrak D}} F g$ with $F \in H^{\infty}_{r,
        (a,b)}$.  We compute
$$
\langle \phi, f \rangle= \frac{1}{2\pi}
            \int_{{\mathbb D}} \operatorname{trace} \left[ B_{{\mathfrak
            D}} F g \cdot z^{-1} f_{0} B_{{\mathfrak
            D}}^{*}\right] \, |dz|= \frac{1}{2 \pi} \int_{{\mathbb D}}
            \operatorname{trace} \left[ z^{-1} \widetilde h F \right]\, |dz|.
$$
            If we expand out $F$ and $\widetilde h$ as
            $$
            F(z) = F(0) + z F'(0) + z^{2} \widetilde F(z), \quad
            \widetilde h(z) = \widetilde h(0) + z \widetilde h'(0) + z^{2} \widetilde{\widetilde h}(z),
            $$
            we see that the zeroth Fourier coefficient of $z^{-1}
            \widetilde h(z) F(z)$ is simply
            $$
            [z^{-1} \widetilde h(z) F(z)]_{0} = \widetilde h(0) F'(0) + \widetilde h'(0) F(0)
            $$
            and hence
            \begin{equation} \label{toshow}
            \frac{1}{2 \pi} \int_{{\mathbb D}}
                   \operatorname{trace} \left[ z^{-1} \widetilde{h} F
\right]\, |dz|
                   = \operatorname{trace} \left[ \widetilde h(0)
                   F'(0) + \widetilde h'(0) F(0) \right].
         \end{equation}
         As $F \in H^{\infty}_{r, (a,b)}$ and $\widetilde h \in
         H^{1}_{a_{\perp}, b_{\perp}}$, we know that
         there are matrices $U$ and $V$ so that
         $$ \begin{bmatrix} \widetilde h'(0) & \widetilde h(0) \end{bmatrix} = U
         \begin{bmatrix} b_{\perp} & a_{\perp} \end{bmatrix},
             \quad \begin{bmatrix} F(0) \\ F'(0) \end{bmatrix}
             =
             \begin{bmatrix} a \\ b \end{bmatrix} V.
         $$
         Hence
$$
\widetilde h(0)F'(0) + \widetilde h'(0) F(0)=
         \begin{bmatrix} \widetilde h'(0) & \widetilde h(0) \end{bmatrix}
             \begin{bmatrix} F(0) \\ F'(0) \end{bmatrix}= U
\begin{bmatrix} b_{\perp} & a_{\perp}
             \end{bmatrix} \begin{bmatrix} a \\ b
             \end{bmatrix} V.
$$
As the left hand side expression equals zero by \eqref{aperpbperp},
it follows that \eqref{toshow} vanishes as needed.
         \end{proof}

\begin{proof}[Proof of sufficiency in Theorem \ref{T:1}]
         We assume that the data set ${\mathfrak D}$ as in
         \eqref{op-data} is such that condition
         \eqref{op-ker-family} is satisfied for all $(\alpha, \beta) \in
     {\mathbb G}(\ell' \times \ell)$ and $n$-tuples $X = (X_{1},
      \dots, X_{n})$ of $k \times \ell$ matrices ($1 \le \ell \le \ell'
       \le k$). By a coordinate-wise application of the theory of Lagrange
         interpolation, we can find a matrix polynomial $Q \in
         (H^{\infty}_{1})^{k \times k}$ which satisfies the
         interpolation conditions \eqref{op-int} (but probably
         {\em not} the additional norm constraint
         $\|Q\|_{\infty} \le 1$).  By the proof of the necessity in
Theorem \ref{T:1} we see that condition
         \eqref{op-ker-family} can be translated to the
         operator-theoretic form
         \begin{equation}  \label{op-form-ker-fam}
             \left\| \left. P_{\cM^{{\mathfrak D}}_{\alpha, \beta}}M_{Q}
             \right|_{\cM^{{\mathfrak D}}_{\alpha, \beta}}
\right\| \le 1 \;
             \text{ for all } \; (\alpha, \beta) \in {\mathbb
             G}(\ell' \times \ell) \; \text{ for } \; 1 \le \ell \leq
             \ell' \le k
         \end{equation}
     where we let $\cM^{{\mathfrak D}}_{\alpha, \beta}$ be the subspace of
     $H^{2}_{\alpha, \beta}$ given by
    $$
    \cM^{{\mathfrak D}}_{\alpha, \beta} = H^{2}_{\alpha, \beta} \ominus
\left(
    H^{2}_{\alpha, \beta} \cap B_{{\mathfrak D}} (H^{2})^{k \times
    \ell} \right).
    $$
         We use $Q$ to define a linear functional on
         $(\cI_{{\mathfrak D}})_{\perp}$ by
         \begin{equation}  \label{linfunc}
             {\mathbb L}_{Q}(h) =
             \frac{1}{2 \pi} \int_{{\mathbb T}}
             \operatorname{trace} \left[ Q(z)  h(z) \right]\,  |dz|.
          \end{equation}
     By Proposition \ref{P:5} we may factor $h$ as $h = g f^{*}$ with
     $g$ and $f$ as in \eqref{factor-props}.  Then we may convert the formula
     for ${\mathbb L}_{Q}(h)$ to an $L^{2}$-inner product as follows:
     \begin{align}
     {\mathbb L}_{Q}(h) & = \frac{1}{2 \pi} \int_{{\mathbb T}}
     \operatorname{trace} \left[ Q(z) h(z) \right]\, |dz| \notag \\
      & = \frac{1}{2 \pi} \int_{{\mathbb T}} \operatorname{trace} \left[
     Q(z) g(z) f(z)^{*} \right] \, |dz| \notag \\
     & = \left\langle Q g, f \right\rangle_{(L^{2})^{k \times \ell}}
    = \left\langle P_{H^{2}_{\alpha, \beta}} Q g, \, P_{\left(
     H^{2}_{\alpha, \beta} \cap B_{{\mathfrak D}}(H^{2})^{k \times
     \ell} \right)^{\perp}} f \right\rangle_{(L^{2})^{k \times \ell}}
     \label{comp1}
     \end{align}
     since $f \in \left(
     H^{2}_{\alpha, \beta} \cap B_{{\mathfrak D}}(H^{2})^{k \times
     \ell} \right)^{\perp}$ by construction and since $H^{2}_{\alpha,
     \beta}$ is invariant under the multiplication operator $M_{Q}$ as
     $Q \in (H^{\infty}_{1})^{k \times k}$. As
     $$
       \cM^{{\mathfrak D}}_{\alpha, \beta} = H^{2}_{\alpha, \beta}  \cap
       \left(
     H^{2}_{\alpha, \beta} \cap B_{{\mathfrak D}}(H^{2})^{k \times
     \ell} \right)^{\perp},
    $$
    we may continue the computation \eqref{comp1} as follows:
    \begin{equation}  \label{comp2}
        {\mathbb L}_{Q}(h)  = \left\langle P_{\cM^{{\mathfrak
D}}_{\alpha, \beta}} Q g,
          P_{\cM^{{\mathfrak D}}_{\alpha, \beta}} f
     \right\rangle_{\cM^{{\mathfrak D}}_{\alpha, \beta}}.
    \end{equation}
    We claim that the linear functional ${\mathbb L}_{Q}$ has
    linear-functional norm at most 1.  To see this we note that
    \begin{align*}
        \| {\mathbb L}_{Q} \| & = \sup_{h \in (\cI_{{\mathfrak
        D}})_{\perp} \colon \|h\|_{1} \le 1} | {\mathbb L}_{Q}(h)| \\
        & = \sup_{ g, \widetilde f \in \cM^{{\mathfrak D}}_{\alpha,
\beta} \colon \|g\|_{2},
        \|\widetilde f\|_{2} \le 1, (\alpha, \beta) \in {\mathbb G}(\ell'
\times
        \ell)}
      \left| \left\langle P_{\cM^{{\mathfrak D}}_{\alpha, \beta}} Q g,
        \widetilde f \right\rangle_{\cM^{{\mathfrak D}}_{\alpha,
\beta}} \right| \le 1
    \end{align*}
    where we use the norm preservation properties \eqref{factor-props} in
    the factorization \eqref{fact} and
    where we use the assumption \eqref{op-form-ker-fam} for the last
    step.  This verifies that $\|{\mathbb L}_{Q} \| \le 1$.

    By the Hahn-Banach Theorem we may extend ${\mathbb L}_{Q}$ to a
    linear functional of norm at most $1$ defined on the whole space
    $(L^{1})^{k \times k}$.  By the Riesz representation theorem for
    this setting (see \eqref{pairing}), such a linear functional has
the form ${\mathbb
    L}_{S}$ for an $S \in (L^{\infty})^{k \times k}$ with
    implementation given by
    $$ {\mathbb L}_{S}(h) = \frac{1}{2 \pi} \int_{{\mathbb T}}
    \operatorname{trace} \left[ S(z) h(z) \right] \, |dz|
    $$
    where also $ \| {\mathbb L}_{S} \| = \| S\|_{\infty}$.
    As $\| {\mathbb L}_{S}\| \le 1$, we have $\| S \|_{\infty}
    \le 1$.  As ${\mathbb L}_{S}$ is an extension of ${\mathbb L}_{Q}$,
    we conclude that
    $$
      {\mathbb L}_{S-Q}|_{(\cI_{{\mathfrak D}})_{\perp}} =
      {\mathbb L}_{S}|_{(\cI_{{\mathfrak D}})_{\perp}} -
      {\mathbb L}_{Q} = 0,
    $$
    or $S-Q \in ((\cI_{{\mathfrak D}})_{\perp})^{\perp} =
    \cI_{{\mathfrak D}}$.  In concrete terms, this means that $S \in
    (H^{\infty}_{1})^{k \times k}$ (since $Q \in (H^{\infty}_{1})^{k
    \times k}$ and $\cI_{{\mathfrak D}}
    \subset H^{\infty}_{1}$), and that $S$ satisfies the
    interpolation conditions \eqref{op-int} (since $Q$ satisfies
    \eqref{op-int} and elements of $\cI_{{\mathfrak D}}$ satisfy the
    associated homogeneous interpolation conditions as in \eqref{cID}).
    As we noted above that $\|S\|_{\infty} \le 1$, we conclude that $S$
    is in fact in $(\cS_{1})^{k \times k}$ and is a solution of the
    interpolation problem as described in
    Theorem \ref{T:1}. This concludes the proof of Theorem \ref{T:1}.
    \end{proof}

      As a corollary we have the following distance formula for
      $\cI_{{\mathfrak D}}\subset(H^{\infty}_{1})^{k \times k}$.

      \begin{corollary}  \label{C:distance}
     Let ${\mathfrak D}$ be a data set as in \eqref{op-data}.
     Then for any $F \in (H^{\infty}_{1})^{k \times k}$,
     \begin{equation}  \label{distance}
     \operatorname{dist}(F, \cI_{{\mathfrak D}}) =
     \sup_{(\alpha, \beta) \in {\mathbb G}(\ell' \times \ell), 1
     \le \ell \le \ell' \le k}  \left\| \left. M_{F}^{*}
     \right|_{\cM^{{\mathfrak D}}_{\alpha, \beta}} \right\|.
     \end{equation}
    \end{corollary}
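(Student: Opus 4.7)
The plan is to convert the distance computation into a constrained Nevanlinna-Pick problem and then quote Theorem~\ref{T:1} with a scaling argument. For any $G \in \cI_{\mathfrak D}$ the difference $S := F - G$ lies in $(H^\infty_1)^{k\times k}$ and satisfies $S(z_i) = F(z_i)$ for $i=1,\dots,n$; conversely every such $S$ arises this way via $G = F - S$. Setting $W_i := F(z_i)$, I would therefore begin from the identity
\[
\operatorname{dist}(F, \cI_{\mathfrak D}) = \inf\bigl\{\|S\|_\infty : S \in (H^\infty_1)^{k\times k},\ S(z_i) = W_i\text{ for }i = 1, \dots, n\bigr\}.
\]
For $r > 0$, an admissible $S$ with $\|S\|_\infty \le r$ exists precisely when $S/r \in (\cS_1)^{k\times k}$ solves the rescaled interpolation problem with data $W_i/r$. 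Theorem~\ref{T:1} applied to this rescaled problem characterizes solvability by the positivity of \eqref{op-ker-family} with each $W_i$ replaced by $W_i/r$, for all $(\alpha,\beta) \in {\mathbb G}(\ell'\times\ell)$, all $1 \le \ell \le \ell' \le k$, and all tuples $X_1,\dots,X_n$ of $k\times\ell$ matrices.

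The next step is to rewrite this positivity as an operator-norm inequality, repeating the kernel-sum computation from the necessity direction of Theorem~\ref{T:1}. For $f = \sum_j X_j K^{\alpha,\beta}(\cdot, z_j)$ the reproducing property \eqref{RKprop} together with \eqref{MF*kernel} yields
\[
\|f\|^2 = \sum_{i,j} \operatorname{trace}\bigl[X_j K^{\alpha,\beta}(z_i,z_j) X_i^*\bigr], \qquad M_F^* f = \sum_j F(z_j)^* X_j K^{\alpha,\beta}(\cdot, z_j),
\]
so that the rescaled version of \eqref{op-ker-family} is exactly $\|M_F^* f\|^2 \le r^2 \|f\|^2$. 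As the tuples $(X_1,\dots,X_n)$ vary, these $f$ span a dense subspace of $\cM^{\mathfrak D}_{\alpha,\beta}$ (its orthogonal complement in $H^2_{\alpha,\beta}$ being $H^2_{\alpha,\beta} \cap B_{\mathfrak D}(H^2)^{k\times\ell}$, the subspace of elements vanishing at all $z_i$); and $\cM^{\mathfrak D}_{\alpha,\beta}$ is $M_F^*$-invariant because its complement is visibly $M_F$-invariant. Consequently the rescaled positivity condition, at a given $(\alpha,\beta)$, is precisely $\|M_F^*|_{\cM^{\mathfrak D}_{\alpha,\beta}}\| \le r$.

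Combining the two reductions, $\operatorname{dist}(F,\cI_{\mathfrak D})$ equals the infimum of those $r>0$ for which $\|M_F^*|_{\cM^{\mathfrak D}_{\alpha,\beta}}\| \le r$ holds simultaneously over all $(\alpha,\beta)\in {\mathbb G}(\ell'\times\ell)$ and $1\le\ell\le\ell'\le k$, which is exactly the supremum on the right-hand side of \eqref{distance}. No genuinely new obstacle arises beyond Theorem~\ref{T:1} itself; the only points requiring care are the $M_F^*$-invariance of $\cM^{\mathfrak D}_{\alpha,\beta}$ and the identification of the span of kernel elements with a dense subspace of $\cM^{\mathfrak D}_{\alpha,\beta}$, both of which are standard reproducing-kernel manipulations already used implicitly in the proof of Theorem~\ref{T:1}.
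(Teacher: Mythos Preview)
Your argument is correct and follows essentially the same route as the paper: rewrite the distance as an infimum over constrained interpolants, rescale to invoke Theorem~\ref{T:1}, and then translate the kernel positivity condition \eqref{op-ker-family} into the operator-norm bound $\|M_F^*|_{\cM^{\mathfrak D}_{\alpha,\beta}}\|\le r$ via \eqref{RKprop}--\eqref{MF*kernel}. Your explicit remarks on the $M_F^*$-invariance of $\cM^{\mathfrak D}_{\alpha,\beta}$ and on the kernel elements spanning it are a helpful gloss on what the paper leaves implicit in its reference to the equivalence of \eqref{op-ker-family} and \eqref{op-form-ker-fam}.
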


    \begin{proof}  Given $F \in (H^{\infty}_{1})^{k \times k}$,  the
        distance of $F$ to $\cI_{{\mathfrak D}}$ can be identified with
        $$
        \operatorname{dist} (F, \cI_{{\mathfrak D}}) =
        \inf \{ \| G\| \colon G\in (H^{\infty}_{1}) \text{ and }
        G(z_{j}) = F(z_{j}) \; \text{ for } \; j = 1, \dots, n\}.
        $$
        By rescaling the result in Theorem \ref{T:1}, we see that this
        infimum in turn can be computed as
        \begin{align*}
       & \inf\{ M \colon
        \sum_{i,j=1}^{n} \operatorname{trace} \left[M^{2} \cdot
        X_{j} K^{\alpha, \beta}(z_{i}, z_{j}) X_{i}^{*} -
      W_{j}^{*} X_{j} K^{\alpha, \beta}(z_{i}, z_{j}) X_{i}^{*}
      W_{i} \right] \ge 0 \\
      & \quad \text{ for all } (\alpha, \beta) \in {\mathbb
      G}(\ell' \times \ell),\, X_{1}, \dots, X_{n} \in {\mathbb
      C}^{k \times \ell},\,
      1 \le \ell \le \ell' \le k\}.
    \end{align*}
        By a routine rescaling of the  equivalence of the kernel criterion
        \eqref{op-ker-family} and the operator-norm criterion
        \eqref{op-form-ker-fam}, this infimum in turn is the same as
        $$ \inf \left\{ M \colon \left\| \left. (M_{F})^{*}
     \right|_{\cM^{{\mathfrak D}}_{\alpha, \beta}} \right\|\le M
     \text{ for all } (\alpha, \beta) \in {\mathbb G}(\ell' \times
     \ell) \text{ for } 1 \le \ell \le \ell' \le k \right\}.
     $$
     This last formula finally is clearly equal to the supremum given in
     \eqref{distance}.
      \end{proof}

  \subsection{A Beurling-Lax theorem for $(H^{\infty}_{1})^{k \times
  k}$}

  We begin with the following variant of the classical Beurling-Lax
  theorem.

  \begin{theorem}  \label{T:BLclassical}
      Suppose that $\cM \subset (H^{2})^{k \times k}$ is invariant
      under the multiplication operator $M_{F} \colon h(z) \mapsto F(z)
      h(z)$ for each $F \in (H^{\infty})^{k \times k}$.  Then, for
      some $\ell \le k$ there is an $\ell \times k$ co-inner function $J$
      (so $J \in (H^{\infty})^{\ell \times k}$ with $J(z) J(z)^{*} =
      I_{\ell}$ for almost all $z \in {\mathbb T}$) so that
      $$
       \cM = (H^{2})^{k \times \ell} \cdot J.
      $$
   \end{theorem}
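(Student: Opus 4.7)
The plan is to reduce to the classical vector-valued Beurling--Lax theorem by passing to the subspace of $(H^{2})^{1 \times k}$ generated by the rows of elements of $\cM$.  Write $h_{i} \in (H^{2})^{1 \times k}$ for the $i$-th row of $h \in (H^{2})^{k \times k}$ and set
$$
\cR = \overline{\operatorname{span}}\,\bigl\{ h_{i} : h \in \cM,\ i = 1, \dots, k \bigr\} \subset (H^{2})^{1 \times k}.
$$
First I would verify that $\cR$ is shift-invariant.  Since $zI_{k} \in (H^{\infty})^{k \times k}$, we have $zh \in \cM$ whenever $h \in \cM$, and the rows of $zh$ are $z$ times the corresponding rows of $h$; passing to linear combinations and closures gives $z\cR \subset \cR$.

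Next I would show that $\cM$ coincides with the set of all $h \in (H^{2})^{k \times k}$ whose rows all lie in $\cR$.  The forward inclusion is tautological.  For the reverse, I would exploit the matrix units $E_{ij} \in \C^{k \times k} \subset (H^{\infty})^{k \times k}$: the product $E_{ij} h$ is the matrix whose $i$-th row is $h_{j}$ and whose other rows vanish, and it lies in $\cM$ by hypothesis.  Consequently, for any finite combination $r = \sum_{m} c_{m} (h^{(m)})_{j(m)}$ of rows of elements $h^{(m)} \in \cM$ and any fixed row index $i$, the matrix with $r$ in row $i$ and zeros elsewhere equals $\sum_{m} c_{m} E_{i,j(m)} h^{(m)} \in \cM$.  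Summing such one-row matrices over $i$ shows that any matrix whose rows are finite linear combinations of rows of elements of $\cM$ already belongs to $\cM$; the closedness of $\cM$ then extends this to arbitrary matrices whose rows lie in $\cR$.

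Finally I would apply the classical vector Beurling--Lax theorem in its row version (obtained by transposing the standard column-vector statement): the shift-invariant subspace $\cR \subset (H^{2})^{1\times k}$ must have the form $\cR = (H^{2})^{1\times \ell} \cdot J$ for some integer $\ell \le k$ and some $\ell \times k$ co-inner function $J$ (i.e.~$JJ^{*} = I_{\ell}$ a.e.\ on $\mathbb{T}$).  Combining this with the row-characterization of $\cM$, an element $h \in (H^{2})^{k \times k}$ belongs to $\cM$ iff each of its rows has the form $g_{i}J$ with $g_{i} \in (H^{2})^{1\times \ell}$; stacking the $g_{i}$ into $g \in (H^{2})^{k\times \ell}$ gives $h = gJ$, whence $\cM = (H^{2})^{k\times \ell} \cdot J$ as required.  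The only subtlety is the closure step in the previous paragraph; the rest amounts to the observation that full left $(H^{\infty})^{k\times k}$-invariance decomposes into shift-invariance together with invariance under the finitely many matrix units $E_{ij}$, after which the classical Beurling--Lax theorem does the remaining work.
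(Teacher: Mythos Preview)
Your proof is correct and takes a genuinely different route from the paper's.  The paper argues directly via a wandering-subspace construction: it sets $\cE = \cM \ominus z\,{\mathbb C}^{k\times k}\cM$, observes that $\cE$ is invariant under left multiplication by constant $k\times k$ matrices and hence has the form ${\mathbb C}^{k\times\ell}J$ for some $J$, verifies that $\cE$ is wandering for the action of $z\,{\mathbb C}^{k\times k}$, and then assembles the orthogonal decomposition $\cM = \bigoplus_{n\ge 0} z^{n}\,{\mathbb C}^{k\times k}J = (H^{2})^{k\times\ell}J$, checking at the end that $J(z)J(z)^{*}=I_{\ell}$ a.e.\ on ${\mathbb T}$.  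You instead reduce to the already-known vector Beurling--Lax theorem by showing that invariance under the matrix units $E_{ij}$ forces $\cM$ to consist precisely of all $k\times k$ matrices whose rows lie in a single shift-invariant row space $\cR\subset (H^{2})^{1\times k}$.  Your approach is more economical if one is willing to quote the classical result as a black box; the paper's argument is self-contained and re-derives the co-inner function from scratch, which is natural given that its real purpose is to set up the template for the genuinely new Beurling--Lax theorem for $(H^{\infty}_{1})^{k\times k}$ (Theorem~\ref{T:BLHinf1}) that immediately follows.
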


   \begin{proof}
       Let $\cE$ be the subspace
       $$
       \cE = \cM \ominus (z {\mathbb C}^{k \times k}) \cM.
       $$
       Note that $\cE$ is invariant under multiplication on the left
       by elements of ${\mathbb C}^{k \times k}$. It follows that
       there is a isometric right multiplication operator  $R_{J}$
      which is an isometry from ${\mathbb C}^{k \times \ell}$ onto
      $\cE$, i.e., so that $\cE = {\mathbb C}^{k \times \ell} J$.  As $\cE
       \subset (H^{2})^{k \times k}$, the entries of $J$ are
       analytic.  One can check that $\cE$ is wandering for $z{\mathbb
       C}^{k \times k}$, i.e., $z^{n}{\mathbb C}^{k \times k} \cE
       \perp z^{m} {\mathbb C}^{k \times k} \cE$ for $n \ne m$.  As
       $\cap_{n \ge 0} z^{n}{\mathbb C}^{k \times k} \cM = \{0\}$, we
       conclude that $\cM$ has the orthogonal decomposition
       $$
       \cM = \bigoplus_{n=0}^{\infty} z^{n} {\mathbb C}^{k \times k} J
       = (H^{2})^{k \times \ell} J.
       $$
      In particular, for all $X,Y \in {\mathbb C}^{k \times k}$ and
      for all $n \in {\mathbb Z}$ not equal to $0$ we have
    \begin{align*}
    0 & = \langle z^{n} X J(z), Y J(z) \rangle = \frac{1}{2 \pi}
    \int_{{\mathbb T}} \operatorname{trace} \left[ z^{n} X J(z) J(z)^{*}
    Y^{*} \right] \, |dz|  \\
    & = \frac{1}{2 \pi} \int_{{\mathbb T}} z^{n}
    \operatorname{trace} \left[J(z) J(z)^{*} Y^{*} X \right]\, |dz|
    \end{align*}
    which forces $J(z) J(z)^{*}$ to be a constant.  As we arranged
    that right multiplication by $J$ is an isometry, the constant must
    be $I_{\ell}$ and Theorem \ref{T:BLclassical} follows.
     \end{proof}

     The Beurling-Lax theorem for the algebra $(H^{\infty}_{1})^{k \times
     k}$ is as follows.
     For this theorem we introduce the notation
     $\overline{\mathbb G}( \ell' \times \ell)$ for the set of pairs $(\alpha,
     \beta)$ of $\ell' \times \ell$ matrices with the property that
     $\alpha \alpha^{*} + \beta \beta^{*} = I_{\ell'}$, i.e.,
     $\overline{\mathbb G}( \ell' \times \ell)$ is defined as
     $\mathbb G( \ell' \times \ell)$ but without the injectivity condition
     on $\alpha$.

     \begin{theorem} \label{T:BLHinf1}  Suppose that the nonzero subspace
    $\cM$ of $(H^{2})^{k \times k}$ is invariant under $M_{F}
    \colon h(z) \mapsto F(z) h(z)$ for all $F \in
    (H^{\infty}_{1})^{k \times k}$.  Then there is a co-inner
    function $J \in (H^{\infty})^{\ell \times k}$ and an
    $(\alpha, \beta) \in \overline{\mathbb G}(\ell' \times \ell)$ for some
    $1 \le\ell \le \ell' \le k$ so that $\cM = H^{2}_{\alpha, \beta}
    \cdot J$.
   \end{theorem}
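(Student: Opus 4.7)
The plan is to reduce to the classical matrix Beurling-Lax Theorem \ref{T:BLclassical} by first passing to the smallest $(H^{\infty})^{k \times k}$-invariant subspace containing $\cM$. Set $\widetilde{\cM} := \overline{(H^{\infty})^{k \times k} \cdot \cM}$; this is closed and $(H^{\infty})^{k\times k}$-invariant, so Theorem \ref{T:BLclassical} furnishes a co-inner $J \in (H^{\infty})^{\ell \times k}$ with $1 \le \ell \le k$ such that $\widetilde{\cM} = (H^{2})^{k \times \ell} \cdot J$. Since $JJ^{*} = I_{\ell}$ on ${\mathbb T}$, right multiplication by $J$ is an isometry from $(H^{2})^{k \times \ell}$ onto $\widetilde{\cM}$, so $\cM$ pulls back to a uniquely determined closed subspace $\cN \subseteq (H^{2})^{k \times \ell}$ with $\cM = \cN \cdot J$, and the problem reduces to identifying $\cN$ as some $H^{2}_{\alpha, \beta}$.

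Two elementary observations drive the classification. First, $\cN$ inherits $(H^{\infty}_{1})^{k\times k}$-invariance from $\cM$: if $G \in (H^{\infty}_{1})^{k\times k}$ and $F \in \cN$, then $GFJ = G(FJ) \in \cM$, so $GF \in \cN$. Second, cancelling $J$ from $\overline{(H^{\infty})^{k \times k} \cdot \cN} \cdot J = \widetilde{\cM} = (H^{2})^{k \times \ell} \cdot J$ gives $\overline{(H^{\infty})^{k \times k} \cdot \cN} = (H^{2})^{k \times \ell}$. Combining these with $z^{2} (H^{\infty})^{k \times k} \subseteq (H^{\infty}_{1})^{k \times k}$ shows $z^{2} (H^{2})^{k \times \ell} \subseteq \cN$, so the quotient $\cE := \cN / z^{2} (H^{2})^{k \times \ell}$ is a well-defined subspace of the jet space $(H^{2})^{k\times\ell}/z^{2} (H^{2})^{k\times\ell}$, which I identify with ${\mathbb C}^{k\times\ell} \oplus {\mathbb C}^{k\times\ell}$ via $F \mapsto (F(0), F'(0))$.

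The key step is classifying $\cE$. For $G \in (H^{\infty}_{1})^{k \times k}$ the constraint $G'(0) = 0$ forces the induced action on jet pairs to be $(F(0), F'(0)) \mapsto (G(0) F(0), G(0) F'(0))$, with $G(0)$ ranging over all of ${\mathbb C}^{k \times k}$. Hence $\cE$ is a ${\mathbb C}^{k \times k}$-invariant subspace of ${\mathbb C}^{k \times 2\ell} \cong {\mathbb C}^{k} \otimes {\mathbb C}^{2\ell}$ under the standard action on the first factor. Because ${\mathbb C}^{k}$ is a simple ${\mathbb C}^{k\times k}$-module, every such invariant subspace is of the form ${\mathbb C}^{k} \otimes W$ for a unique subspace $W \subseteq {\mathbb C}^{2\ell}$. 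Choosing a basis of $W$ as the rows of a matrix $[\alpha, \beta]$ of full row rank $\ell' = \dim W$ (with $\alpha, \beta \in {\mathbb C}^{\ell' \times \ell}$), then replacing $[\alpha, \beta]$ by $(\alpha\alpha^{*} + \beta\beta^{*})^{-1/2}[\alpha, \beta]$ to enforce $\alpha\alpha^{*} + \beta\beta^{*} = I_{\ell'}$ (a harmless change of basis in the $\ell'$-dimensional parameter space), delivers $(\alpha, \beta) \in \overline{\mathbb G}(\ell' \times \ell)$ with $\cN = H^{2}_{\alpha, \beta}$, and therefore $\cM = H^{2}_{\alpha, \beta} \cdot J$.

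I expect the main obstacle to be the module-theoretic classification of the ${\mathbb C}^{k \times k}$-invariant subspaces $\cE$ of ${\mathbb C}^{k \times 2\ell}$, handled cleanly by viewing the ambient space as a tensor product and invoking the simplicity of ${\mathbb C}^{k}$; the remaining manipulations are routine juggling of Beurling-Lax and the $(H^{\infty}_{1})^{k \times k}$-calculus. Some additional care is needed to meet the bound $\ell' \le k$ stated in the theorem, since the tensor-product argument only produces the bound $\ell' \le 2\ell$: one likely must absorb a scalar factor of $z$ into $J$ in the case $\ell' > \ell$ to bring the parameters within the claimed range $\ell \le \ell' \le k$.
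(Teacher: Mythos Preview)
Your approach is essentially the same as the paper's: form $\widetilde{\cM}=\overline{(H^{\infty})^{k\times k}\cdot\cM}$, apply Theorem~\ref{T:BLclassical} to get the co-inner $J$, sandwich $\cM$ between $(H^{2})^{k\times\ell}J$ and $z^{2}(H^{2})^{k\times\ell}J$, and classify the resulting ${\mathbb C}^{k\times k}$-invariant subspace of the two-jet space (your module/tensor-product phrasing via simplicity of ${\mathbb C}^{k}$ is just a cleaner articulation of what the paper asserts in one line). Your closing caveat about the bound $\ell'\le k$ is well placed---the paper's proof does not verify it either, and the argument as written only gives $\ell'\le 2\ell$ (indeed for $k=1$, $\cM=H^{2}$ one is forced to $J=1$ and $\ell'=2$), so this seems to be an imprecision in the stated range rather than a gap you need to repair.
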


   \begin{proof}
   Let $\widetilde \cM := \left[
   (H^{\infty})^{k \times k} \cdot \cM \right]^{-}$.  Then
   $$
    \widetilde \cM \supset \cM \supset (H^{\infty}_{1})^{k \times k}
    \cdot \cM \supset \left[ z^{2} (H^{\infty})^{k \times k} \cdot \cM
    \right]^{-} = z^{2} \cM.
   $$
   By Theorem \ref{T:BLclassical}, there is an $\ell
   \times k$ co-inner function $J$ so that $\widetilde \cM = (H^{2})^{k
   \times \ell} \cdot J$.  Then
   $$
   (H^{2})^{k \times \ell} J \supset \cM \supset z^{2} (H^{2})^{k
   \times \ell} J
   $$
   from which we see that $\cM$ has the form
   $$
     \cM = \cS \cdot J + z^{2} (H^{2})^{k \times \ell} J
   $$
   for some subspace $\cS \subset (H^{2})^{k \times \ell} \ominus
   z^{2}(H^{2})^{k \times \ell}$.  As $\cS$ is invariant under the
   operators $M_{X}$ of multiplication by a constant $k \times k$
   matrix on the left, we conclude that $\cS$ must have the form
   $$
   \cS = \{ X + zY \colon \begin{bmatrix} X & Y \end{bmatrix} = U
   \begin{bmatrix} \alpha & \beta \end{bmatrix} \text{ for some }
       (\alpha, \beta) \in \overline{\mathbb G}( \ell' \times \ell)\}
   $$
   from which it follows that $\cM = H^{2}_{\alpha, \beta} \cdot J$.
   \end{proof}

  \subsection{An analogous problem: Nevanlinna-Pick interpolation on a
  finitely-connected planar domain}

  The purpose of this section is to make explicit how the commutant
  lifting theorem from \cite{Ball79} leads to a proof of Theorem
  \ref{T:Ball79}.  Toward this goal we first need a few preliminaries.

  Let $\cR$ be a bounded finitely-connected planar domain with the boundary
  consisting of $g+1$ analytic closed simple curves $C_{0}, C_{1}, \dots, C_{g}$
  where $C_{0}$ is the boundary of the unbounded component of
  ${\mathbb C} \setminus \cR$.  There is a standard procedure (see
  \cite{Abrahamse}) for introducing $g$ disjoint simple curves
  $\gamma_{1}, \dots, \gamma_{g}$ so that $R \setminus {\boldsymbol
  \gamma}$ (where we set ${\boldsymbol \gamma}$ equal to the union
  ${\boldsymbol \gamma} = \gamma_{1} \cup
  \cdots \cup \gamma_{g}$) is simply connected.  For each cut
  $\gamma_{i}$ we assign an arbitrary orientation, so that points $z$
  not on $\gamma_{i}$ but in a sufficiently small neighborhood of
  $\gamma_{i}$ in $\cR$ can be assigned a location of either ``to the
  left'' of $\gamma_{i}$ or ``to the right'' of $\gamma_{i}$.
  For $f$ a function
  on $\cR \setminus {\boldsymbol \gamma}$ and $z \in \gamma_{i}$, we
  let $f(z_{-})$ denote the limit of $f(\zeta)$ as $\zeta$ approaches
  $z$ from the left of $\gamma_{i}$ in $\cR$, and similarly we let
  $f(z_{+})$ denote the limit of $f(\zeta)$ as $\zeta$ approaches $z$
  from the right of $\gamma_{i}$ in $\cR$, whenever these limits exist.
  We also fix a point $t \in R$ and let $dm_{t}(z)$ be the harmonic
  measure on $\partial R$ for the point $t \in R$; thus
  $$
     u(t) = \int_{\partial R} u(\zeta)\, dm_{t}(\zeta)
   $$
   whenever $u$ is harmonic on $\cR$ and continuous on the closure
   $\overline{R}$.

  Suppose that we are given a $g$-tuple $\bU = (U_{1}, \dots, U_{g})$
  of $ \ell \times \ell$ unitary matrices, denoted as $\bU \in
  \cU(\ell)^{g}$. We also fix a positive integer $k$.  We let
  $H^{2}_{\cC^{2, k \times \ell}}({\mathbf I}_{k},\bU)$ denote the
  space of all $(k \times \ell)$-matrix-valued holomorphic functions $F$
  on $R\setminus {\boldsymbol \gamma}$ such that
  $$
    F(z_{-}) = F(z_{+})U_{i} \text{ for } z \in \gamma_{i}
  $$
  (so $\operatorname{trace}(F(z)^{*}F(z))$ extends by continuity to a
  single-valued function on all of $\cR$) and such that
  $$
  \| F \|_{2} : = \int_{\partial R} \operatorname{trace} \left(
  F(z)^{*} F(z) \right) \, dm_{t}(z) < \infty.
  $$
  Then $H^{2}_{\cC^{2, k \times \ell}}({\mathbf I}_{k}, \bU)$ is a
  Hilbert space.  Given a bounded $k \times k$-matrix-valued function
  $F$ on $\cR$ (denoted as $F \in (H^{\infty}(\cR))^{k \times k}$) and given any $\bU
  \in \cU(\ell)^{g}$ as above, we may consider the multiplication
  operator $M^{\bU}_{F}$ on $H^{2}_{\cC^{2, k \times \ell}}({\mathbf
  I}_{k}, \bU)$ given by
  $$
    M^{\bU}_{F} \colon f(z) \mapsto F(z) f(z) \text{ for } f \in
    H^{2}_{\cC^{2, k \times \ell}}({\mathbf I}_{k}, \bU).
  $$
  Then it can be shown that
  $$
  \| F \|_{\infty} = \sup_{k \colon 1 \le k \le \ell} \quad  \sup_{\bU \in
  \cU(k)^{g}} \left\| M_{F}^{\bU} \right\|.
  $$
  For the particular case where $\bU = (I_{k}, \dots, I_{k})$
  has all entries equal to the $k \times k$ identity matrix $I_{k}$,
  we write ${\mathbf I}_{k}$ for $\bU$.  In case $k=1$ we write
  ${\mathbf 1}$ for $(1, \dots, 1) \in \cU(1)^{g}$.

  Let us now assume that we are given the data set
  $$
    z_{1}, \dots, z_{n} \in R, \quad W_{1}, \dots, W_{n} \in {\mathbb
    C}^{k \times k}
  $$
  for an interpolation problem \eqref{R-int}.  For $\bU \in
  \cU(\ell)^{g}$ we define subspaces $\cM^{\bU}$ and $\cN^{\bU}$ of
  $H^{2}_{\cC^{2, k \times \ell}}({\mathbf I}_{k},\bU)$ by
  \begin{align*}
     & \cM^{\bU}: = \{ f \in H^{2}_{\cC^{2, k \times \ell}}
     ({\mathbf I}_{k} , \bU) \colon f(z_{i}) = 0 \text{ for } i=1, \dots, n\}, \\
    & \cN^{\bU}: = H^{2}_{\cC^{2, k \times \ell}}({\mathbf I}_{k},
    \bU) \ominus \cM^{\bU}.
  \end{align*}
  Let $F_{0}$ be any particular solution of the interpolation
  conditions \eqref{R-int}; e.g., one can take such a solution to be
  a matrix polynomial constructed by solving a scalar Lagrange
  interpolation problem for each matrix entry.   For each $\bU \in
  \cU(\ell)^{g}$ define an operator $\Gamma^{\bU}$ on $\cN^{\bU}$ by
  $$
  \Gamma^{\bU} h = P_{\cN^{\bU}}(F_{0} \cdot h) \text{ for } h \in
  \cN^{\bU}.
  $$
  It is not difficult to see that $\Gamma^{\bU}$ is independent of
  the choice of $(H^{\infty}(\cR))^{k \times k}$-solution of the
  interpolation conditions \eqref{R-int}.
  The following result
  is a more concrete expression of Theorem 4.3 and the Remark
  immediately following from \cite{Ball79}.

  \begin{theorem}  \label{T:Ball79'}  There exists an $F \in
      (H^{\infty}(\cR))^{k \times k}$ with $\|F\|_{\infty} \le 1$ so
      that
  $$
  P_{\cN^{{\mathbf I}_{k}}} M_{F}^{{\mathbf I}_{k}}|_{\cN^{{\mathbf I}_{k}}}
  = P_{\cN^{{\mathbf I}_{k}}} M_{F_{0}}^{{\mathbf I}_{k}}|_{\cN^{{\mathbf I}_{k}}},
  $$
  or, equivalently, so that $F$ satisfies the interpolation conditions
  \eqref{R-int}, if and only if
  \begin{equation}  \label{CLT-criterion}
      \sup_{\ell \colon 1 \le \ell \le k} \sup_{\bU \colon \bU \in
      \cU(\ell)^{g}} \left\| \Gamma^{\bU}\right\| \le 1.
   \end{equation}
   \end{theorem}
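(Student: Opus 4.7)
The plan is to split the biconditional into necessity and sufficiency, with the forward direction essentially bookkeeping and the reverse direction reducing to the abstract commutant lifting theorem from \cite{Ball79}.

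For necessity, suppose $F \in (H^\infty(\cR))^{k\times k}$ with $\|F\|_\infty \le 1$ satisfies \eqref{R-int}. Fix $\bU \in \cU(\ell)^g$ with $1 \le \ell \le k$. The multiplication operator $M_F^\bU$ on $H^2_{\cC^{2,k\times \ell}}(\mathbf{I}_k,\bU)$ has norm $\le \|F\|_\infty \le 1$ by the sup characterization of $\|F\|_\infty$ recalled just before the theorem. Since $\cM^\bU$ is automatically $M_F^\bU$-invariant (multiplication preserves vanishing at $z_i$), the orthogonal complement $\cN^\bU$ is $(M_F^\bU)^*$-invariant, so $P_{\cN^\bU} M_F^\bU|_{\cN^\bU}$ is a contraction. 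Moreover, because $F(z_i) = W_i = F_0(z_i)$, the difference $(F - F_0)h$ lies in $\cM^\bU$ for every $h \in \cN^\bU$, so $P_{\cN^\bU} M_F^\bU|_{\cN^\bU} = P_{\cN^\bU} M_{F_0}^\bU|_{\cN^\bU} = \Gamma^\bU$. Hence $\|\Gamma^\bU\| \le 1$ uniformly in $\bU$ and $\ell$.

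For sufficiency, assume \eqref{CLT-criterion} holds. The key observation is that for each fixed $\bU$, the operator $\Gamma^\bU$ intertwines the compressions to $\cN^\bU$ of left multiplication by arbitrary elements of $(H^\infty(\cR))^{k\times k}$, because such multipliers preserve the $\bU$-twisted boundary conditions and leave $\cM^\bU$ invariant. This places the family $\{\Gamma^\bU\}_{\bU,\ell}$ exactly in the hypotheses of the commutant lifting theorem in the multiply-connected setting \cite[Thm.~4.3 and subsequent Remark]{Ball79}: the uniform contractivity in $\bU \in \cU(\ell)^g$ for $1 \le \ell \le k$ is precisely the criterion needed to lift the whole family simultaneously to a single multiplication operator $M_F^{\mathbf{I}_k}$ with $F \in (H^\infty(\cR))^{k \times k}$ satisfying $P_{\cN^\bU} M_F^\bU|_{\cN^\bU} = \Gamma^\bU$ for all $\bU$. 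Specializing to $\bU = \mathbf{I}_k$ and unwinding the definition of $\Gamma^{\mathbf{I}_k}$ gives $F(z_i) = F_0(z_i) = W_i$, so $F$ interpolates; and the lift produced by \cite{Ball79} satisfies $\|F\|_\infty \le 1$ by the dual sup characterization of the $H^\infty(\cR)$ norm in terms of the $M_F^\bU$.

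The main obstacle is not a new analytic estimate but rather a careful translation: one must verify that the specific model spaces $\cN^\bU$ and operators $\Gamma^\bU$ defined here line up with the abstract operator-theoretic data handled in \cite{Ball79}, and that the norm identity $\|F\|_\infty = \sup_{\ell,\bU} \|M_F^\bU\|$ (together with the Abrahamse-style identification of $(H^\infty(\cR))^{k \times k}$ as the full multiplier algebra of the family $H^2_{\cC^{2,k\times\ell}}(\mathbf{I}_k,\bU)$) allows the abstract lift to be realized by a genuine bounded holomorphic matrix function on $\cR$. Once this dictionary is in place, the proof is a direct invocation of \cite[Thm.~4.3]{Ball79}.
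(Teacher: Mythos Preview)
Your proposal is in line with how the paper handles this statement: the paper does not give a self-contained proof of Theorem~\ref{T:Ball79'} at all, but simply presents it as ``a more concrete expression of Theorem~4.3 and the Remark immediately following from \cite{Ball79}.'' Your write-up supplies exactly the dictionary the paper leaves implicit---the easy necessity direction via invariance of $\cM^{\bU}$ under multipliers, and the reduction of sufficiency to the abstract lifting theorem of \cite{Ball79} after checking that $\Gamma^{\bU}$ lies in the commutant of the compressed $H^{\infty}(\cR)$-action on $\cN^{\bU}$---so there is nothing to compare beyond noting that you have spelled out what the paper merely cites.
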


   To convert the criterion \eqref{CLT-criterion} to a positivity
       test, we proceed as follows.  For $\bU \in \cU(\ell)^{g}$ there is
       a $(\ell \times \ell)$-matrix-valued kernel function
       $K_{\bU}(z,w)$ defined on $\cR \times \cR$ so that
       \begin{align*}
       & x K_{\bU}(\cdot, w) \in H^{2}_{\cC^{2,1 \times
       \ell}}({\mathbf 1}, \bU) \text{ for each } x \in {\mathbb
       C}^{1 \times \ell}, \\
    & \langle f, x K_{\bU}(\cdot, w) \rangle_{H^{2}_{\cC^{2, 1 \times
    \ell}}({\mathbf 1}, \bU)} = \langle f(w), x \rangle_{\cC^{2,1
    \times \ell}} = f(w) x^{*}.
      \end{align*}
      Then this same kernel $K_{\bU}(z,w)$ serves as the kernel function
      for the space of $(k \times \ell)$-matrix-valued functions
      $H^{2}_{\cC^{2, k \times \ell}}({\mathbf I}_{k},
      \bU)$ in the sense that for all $f \in H^{2}_{\cC^{2, k \times \ell}}({\mathbf
       I}_{k}, \bU)$ and $X \in {\mathbb C}^{k \times \ell}$ we have
       $X K_{\bU}(\cdot, w) \in H^{2}_{\cC^{2, k \times \ell}}({\mathbf I}_{k}, \bU)$
       and
       \[
\langle f, X K_{\bU}( \cdot, w) \rangle_{H^{2}_{\cC^{2, k \times
       \ell}}({\mathbf I}_{k}, \bU)} = \langle f(w), X \rangle_{\cC^{2,k
       \times \ell}} = \operatorname{trace} \left( f(w) X^{*} \right).
       \]
     Then one can check that the following hold:
     \begin{align}
    & \overline{\operatorname{span}} \left\{ X K_{\bU}(\cdot, z_{j})
     \colon X \in {\mathbb C}^{k \times \ell},\, j=1, \dots, n
     \right\} =
     \cN^{\bU}, \label{span} \\
     & \left( \Gamma^{\bU}\right)^{*} \colon
      X K_{\bU}( \cdot, z_{j}) \mapsto W_{j}^{*} X K_{\bU}(\cdot, z_{j}).
      \label{Gamma-action}
      \end{align}

     With these preliminaries out of the way, Theorem \ref{T:Ball79}
     follows as a consequence of Theorem \ref{T:Ball79'} by the following
     standard computations.

 \begin{proof}[Proof of Theorem \ref{T:Ball79} via Theorem
     \ref{T:Ball79'}]  By \eqref{span}, a generic element $f$ of
     $\cN^{\bU}$ is given by
     $f(z) = \sum_{j=1}^{n} X_{j} K_{\bU}(\cdot, z_{j})$
  where $X_{1}, \dots, X_{g} \in {\mathbb C}^{k \times \ell}$.  By
  \eqref{Gamma-action},
  $$
  \left( \Gamma^{\bU}\right)^{*} \colon \sum_{j=1}^{n} X_{j} K_{\bU}(
  \cdot, z_{j}) \mapsto \sum_{j=1}^{n} W_{j}^{*} X_{j} K_{\bU}(\cdot,
  z_{j}).
  $$
  Hence $\left\| \Gamma^{\bU} \right\| = \left\| \left( \Gamma^{\bU}
  \right)^{*} \right\| \le 1$ if and only if
  \begin{equation}   \label{normbd1}
      \left\| \sum_{j=1}^{n} X_{j} K_{\bU}( \cdot, z_{j})
      \right\|^{2} - \left\| \sum_{j=1}^{n} W_{j}^{*} X_{j}
      K_{\bU}(\cdot, z_{j}) \right\|^{2} \text{ for all } X_{1},
      \dots, X_{n} \in {\mathbb C}^{k \times \ell}.
  \end{equation}
  Use of the reproducing kernel property of $K_{\bU}(z,w)$ converts
  \eqref{normbd1} to \eqref{R-matrixPick}.  This holding true for all
  $\bU \in \cU(\ell)^{g}$ for each $\ell$ with $1 \le \ell \le k$ is
  equivalent to the existence of a solution of the interpolation
  problem of norm at most 1 by Theorem \ref{T:Ball79'}
  \end{proof}

  \begin{remark} {\em
      If $\bU = (U_{1}, \dots, U_{g}) \in \cU(\ell)^{g}$ has a direct
      sum decomposition
      $ \bU = \bU' \oplus \bU''$  where
    $$  \bU' = (U_{1}', \dots, U_{g}') \in \cU(\ell')^{g} \text{ and }
      \bU'' = (U_{1}'', \dots, U_{g}'') \in \cU(\ell'')^{g}
      $$
      where  $\ell = \ell' + \ell''$, then it is easily seen that
      we have the orthogonal direct-sum decomposition
   $$
   H^{2}_{\cC^{2,k \times \ell}}¥({\mathbf I}_{k}, \bU) =
   H^{2}_{\cC^{2, k \times \ell'}}({\mathbf I}_{k}, \bU')
   \oplus H^{2}_{\cC^{2, k \times \ell''}}({\mathbf I}_{k}, \bU'')
   $$
   which splits the action of $M_{F}^{\bU}$:
   $$
     M_{F}^{\bU} = M_{F}^{\bU'} \oplus M_{F}^{\bU'''}.
   $$
   It follows that in Theorem \ref{T:Ball79} we need only consider
   irreducible elements $\bU$ of $\cU(\ell)^{g}$ for each $\ell = 1,
   \dots, k$, or, alternatively, we need consider only $\cU(\ell)^{g}$
   with $\ell = k$.

   In particular, for the case of $g=1$ where each element $\bU \in
   \cU(k)^{1}$ reduces to a single unitary operator $U$, a
   consequence of the spectral theorem is that any such $U$ is a
   direct sum of scalar $U$'s ($k=1$).  Hence, for the case $g = 1$
   in Theorem \ref{T:Ball79}, it suffices to  consider the condition
   \eqref{R-matrixPick} with $U = \lambda I_{k}$ where $\lambda$ is
   on the unit circle ${\mathbb T}$.
   McCullough in \cite{McC} showed that in fact all these
  kernel functions (or more precisely, a dense set) are needed, even
  when the interpolation nodes $z_{1}, \dots, z_{n}$ are specified
  (see \cite{ McCP} for a refinement of this result).  Remarkably, for
  the case of scalar interpolation ($k = 1$), if one specifies the
  interpolation nodes, Fedorov-Vinnikov \cite{FV} (see \cite{McC} for
  an alternate proof) showed that one can find two points on ${\mathbb T}$
  for which positive-semidefiniteness of the associated Pick matrix
  guarantees solvability for the Nevanlinna-Pick interpolation problem.
  }\end{remark}

\section{Proof of Theorem \ref{T:2}}  \label{S:T2}

In this section we prove Theorem \ref{T:2} in the following more
general setting. Let $\lambda_1,\ldots,\lambda_m$ be distinct points
in $\bbD$ and $r_1,\ldots,r_m\in\bbZ_{+}$ so that $r_i \ge 1$ for
$i=1,\ldots,m$. We write $B$ for the finite Blaschke product
\begin{equation}\label{Blaschke}
B(z)=\prod_{i=1}^m\left(\frac{z-\lambda_i}{1-\overline{\lambda}_iz}\right)^{r_i}.
\end{equation}
Let $(H^\infty_B)^{k\times k}$ be the subalgebra of $(H^\infty)^{k\times
k}$
consisting of the matrix-functions in $(H^\infty)^{k\times k}$ whose
entries
are in the subalgebra $\C+BH^\infty$ of $H^\infty$, and let
$(\cS_B)^{k\times k}$
be the constrained Schur class of functions in $(H^\infty_B)^{k\times
k}$ of supremum
norm at most one.

Then we consider the following matrix-valued constrained
Nevanlinna-Pick interpolation
problem: {\em given a data set ${\mathfrak D}$ as in (\ref{op-data}),
find a function
$S$ in $(\cS_B)^{k\times k}$ satisfying the interpolation conditions
\begin{equation}\label{intercons}
S(z_i)=W_{i}\quad\text{for}\quad i=1,\ldots,n.
\end{equation}
}
We will assume that $z_i\not=\lambda_j$ for $i=1,\ldots,n$ and
$j=1,\ldots,m$,
and return to the case where $z_i=\lambda_j$ for some $i$ and $j$ at
the end of the
section.

To state the solution criterion for the constrained Nevanlinna-Pick
interpolation problem we introduce some more notations.
Define $Z,\widetilde W\in\C^{kn\times kn}$ and $E\in\C^{kn\times k}$ by
\begin{equation}\label{mats1}
Z=\mat{ccc}{z_1I_k&&\\&\ddots&\\&&z_nI_k},\quad
E=\mat{c}{I_k\\\vdots\\I_k},\quad
\widetilde W=\mat{ccc}{W_1&&\\&\ddots&\\&&W_n},
\end{equation}
and define $J\in\C^{kd\times kd}$ and $\widetilde E\in\C^{kd\times
k}$, where $d=r_1+\cdots+r_m$, by
\begin{equation}\label{mats2}
J=\mat{ccc}{J_1&&\\&\ddots&\\&&J_m},\quad \widetilde
E=\mat{c}{\widetilde E_1\\\vdots\\
\widetilde E_m},
\end{equation}
where for $i=1,\ldots,m$ the matrices $J_i\in\C^{kr_i\times kr_i}$
and $\widetilde E_i\in\C^{kr_i\times k}$
are given by
\begin{equation}\label{mats3}
J_i=\mat{ccccc}{
\lambda_iI_k&0&\cdots&\cdots&0\\
I_k&\lambda_iI_k&\ddots&&\vdots\\
0&\ddots&\ddots&\ddots&\vdots\\
\vdots&\ddots&\ddots&\ddots&0\\
0&\cdots&0&I_k&\lambda_iI_k},\quad
\widetilde E_i=\mat{c}{I_k\\0\\\vdots\\\vdots\\0}.
\end{equation}
Let $\Pick$ be the Pick matrix associated with the standard matrix-valued
Nevanlinna-Pick problem, i.e., as in (\ref{pick-mat}), and let
$\Q\in\C^{kd\times kd}$ and
$\widetilde\Q\in\C^{kd\times kn}$ be the unique solutions to the
Stein equations
\begin{equation}\label{Stein}
\Q-J\Q J^*=\widetilde E\widetilde E^*,\qquad \widetilde\Q- J\widetilde\Q
Z^*=\widetilde EE^*,
\end{equation}
that is, $\Q$ and $\widetilde\Q$ are given by the convergent infinite
series
\begin{equation}\label{QtildeQ}
\Q=\sum_{i=0}^\infty J^i\widetilde E\widetilde E^*
J^{*i}\quad\text{and}\quad
\widetilde\Q=\sum_{i=0}^\infty J^i\widetilde E E^* Z^{*i}.
\end{equation}
For the special case considered here there are more explicit formulas
available
  (see e.g.~Appendix A.2 in \cite{BGR}) but we have no need for them.
  In particular one can see directly from the series expansion for $\Q$
  in \eqref{QtildeQ} or quote general theory (using the fact that the
  pair $(J,\widetilde E)$ is controllable---see \cite{BGR})
  that $\Q$ is positive definite and hence invertible.  In fact from
  the infinite series in \eqref{QtildeQ} one can see that $Q \geq I$.

For $X\in\C^{k\times k}$ and $j\in\bbZ_+$ set $X_j\in\C^{jk\times jk}$,
\[
X_j=\mat{ccc}{X&&\\&\ddots&\\&&X},
\]
and define $\Pick_X\in\C^{k(n+2d)\times k(n+2d)}$ by
\begin{equation}\label{pick-X2}
\Pick_X=\mat{ccc}{
\Pick&(I-\widetilde WX_n^*)\widetilde\Q^*&0\\
\widetilde\Q(I-X_n\widetilde W^*)&\Q&X_d\\
0&X_d^*&\Q^{-1}}.
\end{equation}
Then we have the following result.

\begin{theorem}\label{T:3}
Suppose we are given an interpolation data set of the form ${\mathfrak D}$
as in (\ref{op-data}) with $W_1,\ldots,W_n$ equal to $k\times k$ matrices
along with a Blaschke product $B$ as in (\ref{Blaschke}). Assume that
$z_i\not=\lambda_j$ for $i=1,\ldots,n$ and $j=1,\ldots,m$. Then there
exists
a constrained Schur-class function $S$ in $(\cS_B)^{k\times k}$ satisfying
the interpolation conditions (\ref{intercons}) if and only if there exists
a $k\times k$ matrix $X$ so that the matrix $\Pick_X$ in (\ref{pick-X2})
is
positive-semidefinite.
\end{theorem}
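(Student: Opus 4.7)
The plan is to reformulate the constrained problem as a standard matrix-valued mixed Nevanlinna--Pick/Carath\'eodory--F\'ej\'er problem in which the unknown constant $X$ becomes part of the data, apply the classical Pick-matrix solvability criterion, and then linearize the resulting quadratic dependence on $X$ by a Schur-complement trick. A matrix-valued function $S$ lies in $(H^\infty_B)^{k\times k}$ if and only if it admits a representation $S=X+BG$ with $X\in\C^{k\times k}$ and $G\in(H^\infty)^{k\times k}$; since $B$ vanishes to order $r_j$ at $\lambda_j$, this is in turn equivalent to the requirement
\[
S(\lambda_j)=X\text{ for }j=1,\dots,m,\quad S^{(s)}(\lambda_j)=0\text{ for }1\le s\le r_j-1.
\]
Because no $z_i$ coincides with any $\lambda_j$ by hypothesis, the existence of $S\in(\cS_B)^{k\times k}$ satisfying \eqref{intercons} is equivalent to the existence of some $k\times k$ matrix $X$ together with an ordinary Schur-class function $S\in(\cS)^{k\times k}$ satisfying the original point conditions $S(z_i)=W_i$ and the Carath\'eodory--F\'ej\'er conditions above at each $\lambda_j$.

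For $X$ fixed, this enlarged problem is a standard matrix-valued mixed Nevanlinna--Pick/Carath\'eodory--F\'ej\'er problem in $(\cS)^{k\times k}$; the state-space Pick-matrix theory (see \cite{BGR}) tells us it has a solution if and only if the associated Pick matrix
\[
\Pick_X'=\mat{cc}{\Pick & (I-\widetilde W X_n^*)\widetilde\Q^* \\ \widetilde\Q(I-X_n\widetilde W^*) & \Q-X_d\Q X_d^*}
\]
is positive-semidefinite. The $(1,1)$ block is the standard Nevanlinna--Pick block \eqref{pick-mat}; the cross blocks encode the kernel values linking the $z_i$ with the tangential data at the $\lambda_j$ (governed by the Stein equation $\widetilde\Q-J\widetilde\Q Z^*=\widetilde E E^*$); and the $(2,2)$ block is the Carath\'eodory--F\'ej\'er block for the data $(X,0,\dots,0)$ at each $\lambda_j$. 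The state-space recipe identifies this last block as the unique solution $P$ of the Stein equation $P-JPJ^*=\widetilde E(I-XX^*)\widetilde E^*$; the clean identity $P=\Q-X_d\Q X_d^*$ then follows from the commutation identities $JX_d=X_dJ$ and $X_d\widetilde E=\widetilde E X$, both of which hold because $J$ and $\widetilde E$ have their nonzero entries only in scalar multiples of $I_k$. Telescoping the series \eqref{QtildeQ} yields $X_d\Q X_d^*=\sum_{i\ge 0}J^i\widetilde E(XX^*)\widetilde E^*J^{*i}$, as required.

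The final step linearizes the quadratic dependence on $X$. Since $\Q\ge I$ (and in particular $\Q$ is invertible) as noted after \eqref{QtildeQ}, the standard Schur-complement identity gives
\[
\Q-X_d\Q X_d^*\ge 0\ \Longleftrightarrow\ \mat{cc}{\Q & X_d \\ X_d^* & \Q^{-1}}\ge 0,
\]
so inserting this enlarged block in place of the $(2,2)$ corner of $\Pick_X'$ and padding with zeros as in \eqref{pick-X2} produces $\Pick_X$; conversely, the Schur complement of the $\Q^{-1}$ block of $\Pick_X$ returns $\Pick_X'$ exactly. Hence $\Pick_X\ge 0\iff\Pick_X'\ge 0$, and combining the previous steps shows that the constrained problem has a solution if and only if some $k\times k$ matrix $X$ makes $\Pick_X$ positive-semidefinite.

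The main technical hurdle is matching the state-space Pick-matrix formalism of \cite{BGR} to the specific Stein equations \eqref{Stein} so as to verify the exact form of both the cross block $(I-\widetilde W X_n^*)\widetilde\Q^*$ and the Carath\'eodory--F\'ej\'er block $\Q-X_d\Q X_d^*$. Once the key commutation identities $JX_d=X_dJ$ and $X_d\widetilde E=\widetilde E X$ are in hand, the remaining bookkeeping reduces to routine manipulations of \eqref{Stein} and the Schur complement.
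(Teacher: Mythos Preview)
Your proof is correct and follows essentially the same approach as the paper: reformulate membership in $(\cS_B)^{k\times k}$ via the derivative conditions at the $\lambda_j$'s together with the common-value constraint $S(\lambda_j)=X$, reduce to a standard Carath\'eodory--Fej\'er problem with free parameter $X$ whose Pick matrix is $\widetilde\Pick_X$ (your $\Pick_X'$), and then use the Schur complement with respect to the $\Q^{-1}$ block to pass between $\widetilde\Pick_X\ge 0$ and $\Pick_X\ge 0$. Your verification of the $(2,2)$ block via the commutation identities $JX_d=X_dJ$ and $X_d\widetilde E=\widetilde E X$ simply spells out what the paper delegates to the reference \cite{BGR}.
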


\begin{proof}
Notice that an (unconstrained) Schur class function $S$ in
$(H^\infty)^{k\times k}$, i.e.,
$\|S\|_\infty\,\leq1$, is in the constrained Schur class
$(\cS_B)^{k\times k}$ if and only if
\begin{eqnarray}
\label{addcons1}
&\displaystyle S^{(j)}(\lambda_i)=0\ \
\text{for}\ i=1,\ldots,m,\ \text{and}\ j=1,\ldots,r_j-1,&\\
\label{addcons2}
&\displaystyle S(\lambda_1)=\cdots=S(\lambda_m).&
\end{eqnarray}
(If $r_{j}=1$ for some $j$, we interpret \eqref{addcons1} as
vacuous.)
So, alternatively we seek Schur class functions $S$ satisfying the
constraints (\ref{intercons}), (\ref{addcons1}) and (\ref{addcons2}).

Now assume that $S$ is a solution to the constrained Nevanlinna-Pick
interpolation problem.
Set $X=S(\lambda_1)$. Then $X$ is contractive, and $S$ is also
a solution to the matrix-valued Carath\'eodory-Fej\'er interpolation
problem:
{\em find Schur class functions $S$ in $(H^\infty)^{k\times k}$
satisfying (\ref{intercons}),
(\ref{addcons1}) and
\begin{equation}\label{CFaddcons}
S(\lambda_i)=X\quad\text{for}\quad i=1,\ldots,m.
\end{equation}}
Conversely, for any contractive $k\times k$ matrix $X$, a Schur class
function in $(H^\infty)^{k\times k}$ satisfying (\ref{intercons}),
(\ref{addcons1}) and (\ref{CFaddcons}) is obviously also a solution to
the constrained Nevanlinna-Pick problem.

{}From this we conclude that the solutions of the constrained
Nevanlinna-Pick problem
correspond to the solutions of the Carath\'eodory-Fej\'er problem
defined by the
interpolation conditions (\ref{intercons}), (\ref{addcons1}) and
(\ref{CFaddcons}),
where $X$ is a free-parameter in the set of (contractive) $k\times k$
matrices.

Given a $k\times k$ matrix $X$, the Carath\'eodory-Fej\'er problem
described above
is known to have a solution if and only if the Pick matrix
$\widetilde\Pick_X$ given by
\begin{equation}\label{pick-tildeX2}
\widetilde\Pick_X=\mat{cc}{\Pick&(I-\widetilde
WX_n^*)\widetilde\Q^*\\ \widetilde\Q(I-X_n\widetilde W^*)&\Q-X_d \Q X_d^*}
\end{equation}
is positive-semidefinite; see \cite{BGR}. Here we use the same
notations as in the definition of the matrix
$\Pick_X$ in (\ref{pick-X2}).
The theorem then follows from the fact that $\Q$ is invertible and
$\widetilde\Pick_X$ can be written as
\[
\widetilde\Pick_X=\mat{cc}{\Pick&(I-\widetilde
WX_n^*)\widetilde\Q^*\\\widetilde\Q(I-X_n\widetilde W^*)&\Q}
-\mat{c}{0\\X_d}\Q\mat{cc}{0&X_d^*}
\]
so that, with a standard Schur complement argument (see \cite[Remark
I.1.2]{BGK}), we see that for each
$X\in\C^{k\times k}$ the matrix $\widetilde\Pick_X$ is
positive-semidefinite if and only if
the matrix $\Pick_X$ in (\ref{pick-X2}) is positive-semidefinite.
\end{proof}

\begin{proof}[Proof of Theorem \ref{T:2}]
The problem considered in Theorem \ref{T:2} is the special case of the
problem
considered in this section where $m=1$, $r_1=2$ and $\lambda_1=0$,
i.e., $B(z)=z^{2}$.
In that case the solutions $\Q$ and $\widetilde\Q$ to the Stein
equations (\ref{Stein}) are
given by
\[
\Q=\mat{cc}{I_k&0\\0&I_k}\quad\text{and}\quad
\widetilde\Q=\mat{ccc}{I_k&\cdots&I_k\\\overline{z}_1I_k&\cdots&\overline{z}_nI_k},
\]
so that
\[
\widetilde\Q(I-X_n\widetilde W^*)=\mat{c}{E^*-XW^*\\(E^*-XW^*)Z^*}
\]
with $W$ as in (\ref{ZEW}).
It thus follows that in this special case the matrix $\Pick_X$ in
(\ref{pick-X2}) reduces
to (\ref{Pick-X}).
\end{proof}

We conclude this section with some remarks concerning Theorem \ref{T:3}.

\subsection{Parametrization of the set of all solutions}

For each $X\in\C^{k\times k}$ so that $\Pick_X$ in (\ref{pick-X2}) is
positive-semidefinite, the
solutions to the Carath\'eodory-Fej\'er problem defined by the
interpolation
conditions (\ref{intercons}), (\ref{addcons1}) and (\ref{CFaddcons}) can
be
described explicitly as the image of a
linear-fractional-transformation $T_{\Theta_{X}}$ acting on the unit ball
of matrix-valued $H^{\infty}$ of some size. Therefore, if one happens
to be able to find all
$X\in\C^{k\times k}$ for which $\Pick_X$ is positive-semidefinite,
then in principle  one can
describe the set of all solutions to the constrained Nevanlinna-Pick
problem as the union of the images of these
linear-fractional-transformations $T_{\Theta_{X}}$.

\subsection{The case where $B$ and $B_{{\mathfrak D}}$ have
overlapping zeros}

Theorem \ref{T:3} (and also Theorem \ref{T:2}) only considers the case that
$\{\lambda_1,\ldots,\lambda_m\}$ and $\{z_1,\ldots,z_n\}$ do not
intersect.
In case the intersection is not empty, say
\[
\{\lambda_1,\ldots,\lambda_m\} \cap
\{z_1,\ldots,z_n\}=\{z_{i_1},\ldots,z_{i_p}\},
\]
then solutions exist if and only if $W_{i_1}=\cdots=W_{i_p}$ and
$\Pick_{W_{i_1}}$
is positive-semidefinite.
Indeed, this is the case since functions in the constrained
Schur class $(\cS_B)^{k\times k}$ have to satisfy (\ref{addcons2}) and for
a
solution $S$ the matrix $\Pick_{S(\lambda_1)}$ is positive-semidefinite.

The Pick matrix obtained in this case includes some degeneracy since
the interpolation
condition at $z_{i_j}$ is listed twice. There therefore exists a
reduced Pick matrix
whose size, for the case $k=1$, is still larger than the number of
interpolation points
$n$. In \cite{Solazzo} the size of this reduced Pick matrix is
identified with the
dimension of the $C^*$-envelope $C^*({\mathfrak A})$ for the algebra
${\mathfrak A}=H^\infty_B/\cI_{\mathfrak D}$, where $\cI_{\mathfrak
D}$ is the ideal
\[
\cI_{{\mathfrak D}} :=
\{f\in (H^{\infty}_{B}) \colon f(z_{i}) = 0 \text{ for } i=1, \dots, n\}.
\]

\subsection{The criterion in Theorem \ref{T:2DPRS}}

If $X\in\C^{k\times k}$ is a strict contraction, then $\Q-X_d\Q
X_d^*$ is invertible
(if $X$ is not a strict contraction one can use a Moore-Penrose inverse),
and,
again with a Schur complement argument, it follows that
$\widetilde\Pick_X$ in
(\ref{pick-tildeX2}) is positive-semidefinite if and only if the matrix
\begin{equation}\label{}
{\widehat \Pick}_X:=
\Pick-(I-\widetilde W X_n^*)\widetilde\Q^*(\Q-X_d\Q
X_d^*)^{-1}\widetilde\Q(I-X_n\widetilde W^*)
\end{equation}
is positive-semidefinite.
When specified for the setting considered in \cite{DPRS}, i.e., as in
Theorems
\ref{T:1DPRS} and \ref{T:2DPRS}, the matrix ${\widehat \Pick}_\lambda$
is conjugate to the matrix (\ref{clas-crit1}). More specifically, the
matrix
(\ref{clas-crit1}) is equal to $T{\widehat \Pick}_\lambda T^*$ with
\[
T=\mat{ccc}{
\frac{1-\overline{\lambda}w_1}{\sqrt{1-|\lambda|^2}}&&\\
&\ddots&\\&&\frac{1-\overline{\lambda}w_n}{\sqrt{1-|\lambda|^2}}}.
\]
Thus the matrix (\ref{clas-crit1}) being positive-semidefinite corresponds
to
$\Pick_\lambda$ in (\ref{Pick-X}) (for $k=1$) positive-semidefinite.

\subsection{More general algebras}

The argument used in the proof of Theorem \ref{T:3} extends to more
general
subalgebras of $H^\infty$.

Let $B_1,\ldots,B_l$ be finite Blaschke products with $B_i$ having zeros
$\lambda_1^{(i)},\ldots,\lambda_{m_i}^{(i)}$ for $i=1,\ldots l$. Then one
can
consider Nevanlinna-Pick interpolation for functions in the intersection
of the
algebras $H^\infty_{B_i}$. In case all zeros $\lambda_j^{(i)}$ are
distinct
this gives rise to a criterion where one has to check
positive-semidefiniteness
of a Pick matrix with $l$ free parameters.
Given a finite Blaschke product $B$ and a finite number of
polynomials $p_1,\ldots,p_l$,
let ${\mathfrak A}_{p_1,\ldots,p_l}$ be the subalgebra of $H^\infty$
generated by
$p_1,\ldots,p_l$. Nevanlinna-Pick interpolation for functions in the
subalgebra
${\mathfrak A}_{p_1,\ldots,p_l}+B H^\infty$ gives rise to coupling
conditions more
complicated than (\ref{addcons2}) and more intricate Pick matrices
than (\ref{pick-X2})
with possibly more then one free parameter.

For example, consider the case of  three distinct points $\lambda_1$,
$\lambda_2$ and
$\lambda_3$ in $\bbD$ with associated the Blaschke product
\[
B(z)=\left(\frac{z-\lambda_1}{1-\overline{\lambda}_1z}\right)^{3}
\left(\frac{z-\lambda_2}{1-\overline{\lambda}_2z}\right)^{3}
\left(\frac{z-\lambda_3}{1-\overline{\lambda}_3z}\right)^{2}
\]
and polynomials $p_1(z)=1$ and $p_2(z)=z^{2}$.
Those functions $f$ which are in the algebra ${\mathfrak
A}_{p_1,p_2}+B H^\infty$
can then be characterized as those $f$ in $H^\infty$ that satisfy
$f'(\lambda_i)=0$ for $i=1,2,3$
along with
\begin{equation}\label{coupling}
\begin{array}{c}
f''(\lambda_1)=f''(\lambda_2),\\[.2cm]
2f(\lambda_1)+\lambda_1^2f''(\lambda_1)=2f(\lambda_2)+\lambda_2^2f''(\lambda_2)
=2f(\lambda_3)+\lambda_3^2f''(\lambda_3).
\end{array}
\end{equation}
By specifying both values in the coupling conditions (\ref{coupling})
we return to a standard Carath\'eodory-Fej\'er problem;  thus,
for the general problem we obtain a Pick matrix with two free parameters.

\subsection{The non-square case}
We expect that the techniques used
here can enable one to obtain a non-square version of Theorems
\ref{T:1} and \ref{T:3} where one seeks $S \in (\cS_{1})^{k \times
k'}$ (or, more generally, $S \in (\cS_{B})^{k \times k'}$) with $k
\ne k'$ which satisfies some prescribed set of interpolation
conditions. Note that $(\cS_{B})^{k \times k'}$ is no longer an
algebra but rather an {\em operator space} when $k \ne k'$ (see
e.g.~\cite{Pisier}). There is an abstract notion of a non-square
analogue of a $C^{*}$-algebra, namely the $J^{*}$-algebras
introduced by Harris \cite{Harris}. Perhaps looking for the
$J^{*}$-algebra envelop of a quotient operator space will give
insight into interpolation and dilation theory, as is the case for
the algebra setting (see \cite{DPRS}).

\section{Analysis of the LMI criterion of Theorem \ref{T:2}}\label{S:LMI}

Recall from the introduction that the LMI criterion of Theorem \ref{T:2} is equivalent to
the existence of a matrix $X$ such that the Pick matrix $\Pick'_X$ given by (\ref{Pick-X'})
be positive semidefinite, i.e.,
\begin{equation}\label{criterion1}
\mat{cc}{\Pick&\tilE+\tilW\tilX^*\\\tilE^*+\tilX\tilW^*&I-\tilX\tilX^*}\geq0,
\end{equation}
where $\Pick$ is the standard Pick matrix (\ref{pick-mat}),
\begin{equation}\label{specified}
\tilE=\mat{cc}{E&ZE},\quad\tilW=\mat{cc}{W&ZW}\quad\text{and}\quad\tilX=\mat{cc}{X&0\\0&X}
\end{equation}
with $E$, $Z$ and $W$ as in (\ref{ZEW}).

We forget for now about the structure in the matrices $\tilE$, $\tilW$ and $\tilX$, and just
assume that $\tilE$, $\tilW$ are given $k\times nk$ matrices and $\tilX$ is a free-parameter
$2k\times 2k$ matrix.
In case $\Pick$ is positive definite we obtain, after taking the Schur complement with respect
to $\Pick$, that (\ref{criterion1}) is equivalent to the
positive-semidefiniteness condition
\begin{equation}   \label{graphXpos}
\begin{array}{l}
I-\tilX\tilX^*-(\tilE^*+\tilX\tilW^*)\Pick^{-1}(\tilE+\tilW\tilX^*)=\\[.2cm]
\hspace*{2cm}=\mat{cc}{I&\tilX}
\mat{cc}{I-\tilE^*\Pick^{-1}\tilE&-\tilE^*\Pick^{-1}\tilW\\
-\tilW^*\Pick^{-1}\tilE&-(I+\tilW^*\Pick^{-1}\tilW)}\mat{c}{I\\\tilX^*} \succeq 0.
\end{array}
\end{equation}

Now assume that $\Pick$ is positive definite and that the matrix
\begin{equation}\label{M}
M=\mat{cc}{M_{11}&M_{12}\\M_{21}&M_{22}}:=
\mat{cc}{I-\tilE^*\Pick^{-1}\tilE&-\tilE^*\Pick^{-1}\tilW\\
-\tilW^*\Pick^{-1}\tilE&-(I+\tilW^*\Pick^{-1}\tilW)}
\end{equation}
is invertible.
Then \eqref{graphXpos} can be interpreted as saying that the subspace
\begin{equation}   \label{graph}
\cG_{X^{*}} = \begin{bmatrix} I \\ \tilX^{*} \end{bmatrix} {\mathbb
C}^{2k}
\end{equation}
is a $2k$-dimensional positive subspace in the Kre\u{\i}n space
$({\mathbb C}^{4k}, M)$, i.e., ${\mathbb C}^{4k}$ endowed with the
Kre\u{\i}n-space inner product
$$
[x,y]_{M} = \langle M x, y\rangle_{{\mathbb C}^{4k}}
$$
(we refer the reader to \cite{Bognar} for elementary facts concerning
Kre\u{\i}n spaces).
Conversely, any $2k$-dimensional positive subspace $\cG$ of
$({\mathbb C}^{4k},M)$ has the form \eqref{graph}  as long as
\begin{equation}  \label{compl}
    \cG \cap \begin{bmatrix} 0 \\ {\mathbb C}^{2k} \end{bmatrix} = \{0\}.
\end{equation}
Since $M_{22} = -(I + \tilW^{*}{\mathbb P}^{-1} \tilW)$ is strictly
negative definite, the subspace $\sbm{0 \\ {\mathbb C}^{2k}}$ is
uniformly negative in $({\mathbb C}^{4k},M)$ and hence condition
\eqref{compl} is automatic for any positive subspace $\cG \subset
({\mathbb C}^{4k},M)$.  We conclude that {\em there exist $(2k \times
2k)$-matrix solutions $\tilX$ to \eqref{graphXpos} if and only if the
invertible matrix $M$ has at least $2k$ positive eigenvalues.}  As
already observed above, $M_{22}< 0$ so $M$ must have at least $2k$
negative eigenvalues.  We conclude that {\em there exist solutions to
\eqref{graphXpos} if and only if $M$ has exactly $2k$ positive
eigenvalues}, or, equivalently, again by a standard Schur-complement argument
\cite[Remark I.1.2]{BGK}, {\em if and only if the Schur complement
\begin{align}
\Lambda & =I-\tilE^*\Pick^{-1}\tilE+\tilE^*\Pick^{-1}\tilW(I+\tilW^*\Pick^{-1}\tilW)^{-1}
\tilW^*\Pick^{-1}\tilE \notag \\
& = I - \tilE^{*} (\Pick + \tilW \tilW^*)^{-1} \tilE \label{Lambda}
\end{align}
is positive definite.}
Given that this is the case, we can then factor
$M$ as $M = A^{-1*} J A^{-1}$ where we set $J = \sbm{I_{2k} & 0 \\ 0
& -I_{2k}}$ and where $A = \sbm{a_{11} & a_{12} \\ a_{21} & a_{22}}$
is an invertible $4k \times 4k$ matrix with block entries $a_{ij}$
taken to have size $2k \times 2k$ for $i,j=1,2$.  Then {\em the set
$X^{*}$ for which \eqref{graphXpos} holds can be expressed as the set
of all $2k \times 2k$ matrices $X$ so that $X^{*}$ has the form
$$
  X^{*} = (a_{21}K + a_{22})(a_{11}K + a_{12})^{-1}
$$
for some contractive $2k \times 2k$ matrix $K$.}  With a little more
algebra, such an image of a linear-fractional map can be converted
to the form of a matrix ball.   Results of this type go back at
least to \cite{Siegel,Hua,Potapov55,Potapov79}. The following
summary for our situation here can be seen as a direct result of the
discussion here and Theorem 1.6.3 of \cite{DFK92}.

\begin{theorem}\label{T:DFK}
Assume that $\Pick$ is positive definite and that $M$ in \eqref{M} is invertible.
Then there exists an $2k\times 2k$ matrix $\tilX$ such that \eqref{criterion1} holds
if and only if $\Lambda$ in \eqref{Lambda} is positive semidefinite. In that case
$\Lambda$ is positive definite and all $\tilX$ such that \eqref{criterion1} holds are
given by
\begin{equation}\label{tilX}
\tilX=C+L^{\half}KR^{\half},
\end{equation}
where $K$ is a free-parameter contractive $2k\times 2k$ matrix. Here $R=\Lambda$, and
$C$ and $L$ are the $2k\times 2k$ matrices given by
\[
C= - \tilE^{*} (\Pick + \tilW \tilW^{*})^{-1} \tilW, \quad L = I -
\tilW^{*} (\Pick + \tilW \tilW^{*})^{-1} \tilW.
\]
Moreover, \eqref{criterion1} holds with strict inequality if and only if $K$ is a strict
contraction.
\end{theorem}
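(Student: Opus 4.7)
The plan is to formalize the Kre\u{\i}n-space / Potapov matrix-ball argument already sketched in the discussion preceding the theorem and then invoke Theorem~1.6.3 of \cite{DFK92} to convert the linear-fractional parametrization of positive subspaces of a Kre\u{\i}n space into the matrix-ball form \eqref{tilX}.

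First I would use the Schur complement with respect to the positive definite block $\Pick$ to recall that \eqref{criterion1} is equivalent to \eqref{graphXpos}, and read the latter as the assertion that the graph subspace $\cG_{\tilX^{*}} = \sbm{I \\ \tilX^{*}}\C^{2k}$ is a $2k$-dimensional nonnegative subspace of the Kre\u{\i}n space $(\C^{4k}, M)$. Since $M_{22} = -(I + \tilW^{*}\Pick^{-1}\tilW)$ is strictly negative, the subspace $\sbm{0\\ \C^{2k}}$ is uniformly negative, so every $2k$-dimensional nonnegative subspace of $(\C^{4k}, M)$ meets $\sbm{0\\ \C^{2k}}$ trivially and hence arises as the graph of a unique $2k\times 2k$ matrix $\tilX^{*}$. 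This gives a bijection between matrices $\tilX$ satisfying \eqref{criterion1} and $2k$-dimensional nonnegative subspaces of $(\C^{4k}, M)$, and reduces strict inequality in \eqref{criterion1} to uniform positivity of $\cG_{\tilX^{*}}$.

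Second, because $M_{22}\prec 0$ already furnishes a $2k$-dimensional negative subspace and $M$ is invertible, a $2k$-dimensional nonnegative subspace of $(\C^{4k}, M)$ exists if and only if $M$ has signature $(2k,2k)$. A standard Schur-complement argument with respect to the negative pivot block $M_{22}$ (see \cite[Remark~I.1.2]{BGK}) shows that this is equivalent to
\[
M_{11} - M_{12} M_{22}^{-1} M_{21}
= I - \tilE^{*}\Pick^{-1}\tilE + \tilE^{*}\Pick^{-1}\tilW(I+\tilW^{*}\Pick^{-1}\tilW)^{-1}\tilW^{*}\Pick^{-1}\tilE
\]
being positive definite, and the Woodbury identity rewrites this expression as $\Lambda = I - \tilE^{*}(\Pick + \tilW\tilW^{*})^{-1}\tilE$. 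This establishes the solvability criterion and shows $\Lambda \succ 0$ whenever solutions exist.

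Third, under the assumption $\Lambda \succ 0$, factor $M = A^{-*}JA^{-1}$ with $J = \sbm{I_{2k} & 0 \\ 0 & -I_{2k}}$; the $2k$-dimensional nonnegative subspaces of $(\C^{4k}, M)$ are then precisely the images under $A$ of graphs of $2k\times 2k$ contractions $K$, and strict uniform positivity corresponds to strict contractivity of $K$. Reading this off on the graph side yields the linear-fractional parametrization of $\tilX^{*}$, and Theorem~1.6.3 of \cite{DFK92} converts it directly into the matrix-ball form \eqref{tilX}. The main obstacle is the identification of the closed-form expressions for $C$, $L$ and $R$: the center $C$ is the image of $K=0$ and the weights $L$, $R$ are the ``inner'' Schur complements of $M$, so matching the abstract recipe of \cite{DFK92} to the formulas in the theorem amounts to careful bookkeeping with Schur complements of $M$ together with repeated use of the Woodbury inversion of $\Pick + \tilW\tilW^{*}$. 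This bookkeeping step is where essentially all of the remaining calculation lives.
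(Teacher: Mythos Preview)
Your proposal is correct and follows essentially the same approach as the paper: the paper's argument is exactly the Kre\u{\i}n-space discussion preceding the theorem statement (Schur complement with respect to $\Pick$, interpretation of \eqref{graphXpos} as a positive graph subspace in $(\C^{4k},M)$, the eigenvalue count via the Schur complement $\Lambda$ of $M$ with respect to $M_{22}$, and the $J$-factorization $M=A^{-*}JA^{-1}$), capped by an appeal to Theorem~1.6.3 of \cite{DFK92} for the matrix-ball form. Your explicit mention of the Woodbury identity and the bookkeeping needed to match $C$, $L$, $R$ to the DFK recipe just spells out what the paper leaves implicit in that citation.
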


Under the assumptions of Theorem \ref{T:DFK} we thus get a full description of all matrices
$\tilX$ so that (\ref{criterion1}) is satisfied. This however is not enough to solve the problem
{\bf MCNP}, since the matrix $\tilX$ is also required to be of the form (\ref{specified}). We thus get
the following result.

\begin{theorem}
Let ${\mathfrak D}$ be a data set as in \eqref{op-data}. Assume that the Pick matrix $\Pick$ in
\eqref{pick-mat} is positive definite, and that $M$ in \eqref{M} is invertible.
Here $\tilE$ and $\tilW$ are given by \eqref{specified} with $E$, $W$ and $Z$ as in
\eqref{ZEW}. Then there exists a solution to the problem {\bf MCNP} if and only if
(1) $\Lambda$ in \eqref{Lambda} is positive definite and (2) there exists a contractive $2k\times 2k$
matrix $K$ such that $\tilX$ given by \eqref{tilX} is of the form in \eqref{specified}.
\end{theorem}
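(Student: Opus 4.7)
The plan is to deduce this theorem by combining Theorem \ref{T:2} (which reduces MCNP to a finite-dimensional LMI) with Theorem \ref{T:DFK} (which solves the unstructured LMI). First, I would invoke Theorem \ref{T:2}: the problem MCNP is solvable iff there exists $X\in\C^{k\times k}$ with $\Pick_X$ as in \eqref{Pick-X} positive semidefinite. By the Schur-complement identity between \eqref{Pick-X'} and \eqref{Pick-X} already recorded in the introduction, this is in turn equivalent to $\Pick'_X\geq0$, which is exactly the matrix inequality \eqref{criterion1} subject to the constraint that $\tilE$, $\tilW$ and $\tilX$ arise from $E$, $W$, $Z$, $X$ through \eqref{specified}.

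Next, I would appeal to Theorem \ref{T:DFK}, applied to the \emph{unstructured} inequality \eqref{criterion1} in which $\tilX$ is regarded as a free $2k\times 2k$ parameter. Under the standing assumptions that $\Pick$ is positive definite and that $M$ in \eqref{M} is invertible, Theorem \ref{T:DFK} tells us that \eqref{criterion1} admits \emph{some} $2k\times 2k$ solution $\tilX$ precisely when $\Lambda$ in \eqref{Lambda} is positive definite, in which case the complete set of solutions is the matrix ball
\[
\{\, C+L^{\half}KR^{\half} : K\in\C^{2k\times 2k},\ \|K\|\le 1\,\}
\]
with $C$, $L$, $R=\Lambda$ as described there.

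Finally, I would combine the two reductions: MCNP has a solution iff the set of $\tilX$ satisfying \eqref{criterion1} contains at least one element of the structured form \eqref{specified}. Since this solution set is empty when $\Lambda$ fails to be positive definite and is the matrix ball \eqref{tilX} otherwise, solvability of MCNP is equivalent to the conjunction of (1) $\Lambda>0$ and (2) existence of a contractive $K$ so that $C+L^{\half}KR^{\half}$ has block-diagonal form $\operatorname{diag}(X,X)$. This yields exactly the stated equivalence.

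The main obstacle is not any of the steps above, which are essentially bookkeeping once Theorems \ref{T:2} and \ref{T:DFK} are in hand; rather it is that condition (2) itself does not further collapse to a clean positivity test. Geometrically, (2) asks whether the matrix ball centered at $C$ with left semi-radius $L^{\half}$ and right semi-radius $R^{\half}$ meets the affine subspace cut out by the block-diagonal-with-repeated-block structure in \eqref{specified}. This is a structured-LMI / LFT-uncertainty question of exactly the flavor pointed to in the introduction (cf.\ \cite{DP}), and one should not expect a more transparent analytic criterion at this level of generality; the value of the theorem is to isolate the obstruction in a form amenable to numerical LMI methods.
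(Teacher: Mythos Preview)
Your proposal is correct and follows exactly the route the paper takes: the theorem is stated there without a separate proof, as an immediate consequence of the preceding discussion combining Theorem~\ref{T:2} (via the equivalence of \eqref{Pick-X} and \eqref{criterion1} with the structured $\tilX$) with the matrix-ball description from Theorem~\ref{T:DFK}. Your closing remarks on why condition~(2) resists further simplification also mirror the paper's own commentary.
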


We now consider the LMI criterion for the problem {\bf CNP},
i.e., the Nevanlin\-na-Pick
problem for functions in the scalar-valued constrained Schur class $\cS_1$.
Let ${\mathfrak D}$ be a data set as in (\ref{clas-data}). If $|w_i|=1$ for some $i$, then a
solution exists if and only if $w_i=w_j$ for all $i,j=1,\ldots,n$. In that case, the solution
is unique and equal to the constant function with value $w_i$, and the Pick matrix $\Pick_x$ in
(\ref{Pick-X}) (with $k=1$) is positive semidefinite only for $x=w_i$. On the other hand,
if $|w_i|<1$ for all $i$, then for $\Pick_x$ to be positive semidefinite it is necessary
that $|x|<1$. In the latter case we obtain the following result.

\begin{theorem}\label{T:LMIclas}
Let ${\mathfrak D}$ be a data set as in \eqref{clas-data} with $|w_i|<1$ for all $i$.
Define $Z$, $E$ and $W$ by \eqref{ZEW} with $k=1$, and assume that $\Delta=\Pick+WW^*+ZWW^*Z$
is positive definite. Set $\widetilde\Delta$ equal to
\[
\widetilde\Delta=\Pick -EE^*-ZEE^*Z^*+(WE^*+ZWE^*Z^*)\Delta^{-1}(EW^*+ZEW^*Z^*).
\]
Then there exists a solution to the problem ${\bf CNP}$ if and only
if (1) the matrix
$\widetilde\Delta$ is positive semidefinite and (2) there exists a contractive
$n\times n$ matrix $K$ such that
\begin{equation}\label{Kcondition}
\tilX:=\Delta^{-1}(EW^*+ZEW^*Z^*)+\Delta^{-\half}K\widetilde\Delta^{\half}
\end{equation}
is a scalar multiple of the identity matrix $I_n$.
Moreover, given that  $\widetilde\Delta$ is positive semidefinite and
given $x \in {\mathbb D}$, the Pick matrix
$\Pick_x$ in \eqref{Pick-X} is positive semidefinite  if and only if $\tilX: = \overline{x} I_{n}$ has the
form \eqref{Kcondition} for some contractive $n \times n$ matrix $K$.
Finally, for the existence of an $x \in {\mathbb D}$ with $\Pick_x$ positive definite it is necessary
that $\widetilde\Delta$ be positive definite, and if this is the case, then $\Pick_x$ is positive definite
if and only if $\tilX: = \overline{x} I_{n}$ has the form \eqref{Kcondition} for some strictly contractive
$n \times n$ matrix $K$.
\end{theorem}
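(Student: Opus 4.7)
The plan is to analyze $\Pick_x\geq 0$ through successive Schur complements, reducing it to a matrix-ball inequality that is handled by Douglas' factorization lemma. By Theorem~\ref{T:2}, existence of a solution to {\bf CNP} is equivalent to the existence of some scalar $x$ with $\Pick_x\geq 0$, which by the Schur-complement identity preceding \eqref{Pick-X} is equivalent to $\Pick_x'\geq 0$ for the $k=1$ specialization of \eqref{Pick-X'}. First I would confirm the reduction $|x|<1$ asserted in the discussion preceding the theorem: the $2\times 2$ principal submatrix of $\Pick_x$ on rows and columns $2$ and $4$ (in the block layout of \eqref{Pick-X}) is $\sbm{1 & x\\ \bar x & 1}$, so $|x|\leq 1$; if $|x|=1$, then $v:=e_2-\bar x e_4$ lies in the kernel of this compression, hence $\Pick_x v=0$, whose top block-row reads $E-W\bar x=0$, forcing $w_i=1/\bar x$ and so $|w_i|=1$, contradicting $|w_i|<1$.

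With $|x|<1$ the $(1-|x|^2)I_2$ block in the lower-right of $\Pick_x'$ is strictly positive, and one Schur complement reduces $\Pick_x'\geq 0$ to the $n\times n$ inequality
\[
(1-|x|^2)\Pick \;\geq\; (E-W\bar x)(E-W\bar x)^* + Z(E-W\bar x)(E-W\bar x)^*Z^*.
\]
Expanding and grouping by powers of $x$, $\bar x$, and setting $V:=EW^*+ZEW^*Z^*$ (so that $V^*=WE^*+ZWE^*Z^*$), this becomes
\[
\Pick - EE^* - ZEE^*Z^* + xV + \bar x V^* - |x|^2\Delta \;\geq\; 0.
\]
Invertibility of $\Delta$ permits the completion-of-squares identity
\[
|x|^2\Delta - xV - \bar x V^* + V^*\Delta^{-1}V \;=\; (\bar x I_n-\Delta^{-1}V)^*\,\Delta\,(\bar x I_n-\Delta^{-1}V),
\]
which rewrites the previous line as
\[
\widetilde\Delta \;\geq\; (\bar x I_n-\Delta^{-1}V)^*\,\Delta\,(\bar x I_n-\Delta^{-1}V),
\]
with $\widetilde\Delta$ as defined in the theorem.

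Setting $A:=\bar x I_n-\Delta^{-1}V$, Douglas' factorization lemma gives $A^*\Delta A\leq\widetilde\Delta$ iff there is a contraction $K\in\C^{n\times n}$ with $\Delta^{\half}A=K\widetilde\Delta^{\half}$, equivalently $\bar x I_n=\Delta^{-1}V+\Delta^{-\half}K\widetilde\Delta^{\half}=\tilX$. Combining the equivalences: for fixed $x\in\bbD$, $\Pick_x\geq 0$ iff $\bar x I_n$ realizes $\tilX$ for some contractive $K$, which is the middle statement of the theorem; and clause (1), $\widetilde\Delta\geq 0$, is the automatic necessary condition $\widetilde\Delta\geq A^*\Delta A\geq 0$. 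The first statement of the theorem follows: {\bf CNP} has a solution iff some $x$ gives $\Pick_x\geq 0$, iff (1) and (2) both hold. For the final strict-inequality assertion, if $\Pick_x>0$ then $\widetilde\Delta-A^*\Delta A>0$; for any $v\in\ker\widetilde\Delta\setminus\{0\}$, $\langle(\widetilde\Delta-A^*\Delta A)v,v\rangle=-\|\Delta^{\half}Av\|^2\leq 0$, so $\widetilde\Delta$ must be strictly positive. Given $\widetilde\Delta>0$, $\widetilde\Delta^{\half}$ is invertible, $K=\Delta^{\half}A\widetilde\Delta^{-\half}$ is uniquely determined, and $\Pick_x>0$ is equivalent to $K^*K<I$, i.e.\ $K$ is a strict contraction.

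The main delicacy I anticipate is the application of Douglas' lemma when $\widetilde\Delta$ is only positive-semidefinite: the equation $\Delta^{\half}A=K\widetilde\Delta^{\half}$ constrains $K$ only on $\operatorname{ran}\widetilde\Delta^{\half}$, so one must verify the range inclusion $\operatorname{ran}(\Delta^{\half}A)\subset\operatorname{ran}\widetilde\Delta^{\half}$ implicit in Douglas' lemma and then extend $K$ by zero on the orthogonal complement to obtain an $n\times n$ contraction. Reversibility of the Schur-complement chain also relies on both $\Delta>0$ and $|x|<1$; the latter is secured at the outset by the hypothesis $|w_i|<1$.
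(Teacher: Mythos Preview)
Your proof is correct and follows the same line as the paper: reduce $\Pick_x\geq 0$ to $\Pick_x'\geq 0$, take the Schur complement with respect to the $(1-|x|^2)I_2$ block, complete the square against $\Delta$ to arrive at $\widetilde\Delta\geq(\bar xI_n-\Delta^{-1}V)^*\Delta(\bar xI_n-\Delta^{-1}V)$, and then invoke Douglas' lemma. Your write-up is in fact somewhat more careful than the paper's in two places: you give an explicit argument that $|x|<1$ under the hypothesis $|w_i|<1$, and you address the range-inclusion subtlety in Douglas' lemma when $\widetilde\Delta$ is only semidefinite.
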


\begin{proof}
It follows from Theorem \ref{T:2}, specified to the case $k=1$,
that a solution to problem {\bf CNP} exists if and only if there
exists an $x\in\C$ such that the Pick matrix $\Pick_x$ in
(\ref{Pick-X}) is positive semidefinite, or equivalently,
the  Pick matrix $\Pick_x'$ in (\ref{Pick-X'}) is positive semidefinite.
In this case, the term $I-XX^*$ is just $1-|x|^2$. As explained above,
without loss of generality we may assume that $|x|<1$.

Now fix an $x\in\bbD$. We can then take the Schur complement of $\Pick_x'$
with respect to the block $\sbm{1-|x|^2&0\\0&1-|x|^2}$ to obtain that $\Pick_x'$
is positive semidefinite if and only if
\begin{equation}\label{eq1}
\Pick-\frac{1}{1-|x|^2}(E-\overline{x}W)(E^*-xW^*)-\frac{1}{1-|x|^2}Z(E-\overline{x}W)(E^*-xW^*)Z^*\geq0.
\end{equation}
After multiplication with $1-|x|^2$ and rearranging terms it follows that (\ref{eq1})
is equivalent to
\[
|x|^2\Delta-\overline{x}(WE^*+ZWE^*Z^*)-x(EW^*+ZEW^*Z^*)+\Pick-EE^*-ZEE^*Z^*\leq0
\]
and thus equivalent to
\begin{equation}\label{eq2}
\begin{array}{l}
(x\Delta^{\half}-(WE^*+ZWE^*Z^*)\Delta^{-\half})(\overline{x}\Delta^{\half}-\Delta^{-\half}(EW^*+ZEW^*Z^*))
\leq\\
\hspace*{.4cm}\leq \Pick-EE^*-ZEE^*Z^*+(WE^*+ZWE^*Z^*)\Delta^{-1}(EW^*+ZEW^*Z^*)=\tilde\Delta.
\end{array}
\end{equation}
It follows in particular that $\widetilde\Delta$ must be positive semidefinite for a
solution to exist.

Assume that $x\in\bbD$ is such that $\Pick_x'$ is positive
semidefinite, and thus also
$\widetilde\Delta$ is positive semidefinite. Then by the previous computations we see that
\begin{equation}\label{tilK}
\tilK=\overline{x}\Delta^{\half}-\Delta^{-\half}(EW^*+ZEW^*Z^*)
\end{equation}
satisfies $\tilK^*\tilK\leq\widetilde\Delta$. Using Douglas factorization lemma
we obtain a contractive matrix $K$ such that $\tilK=K\widetilde\Delta^{\half}$,
and thus
\begin{eqnarray*}
\Delta^{-1}(EW^*+ZEW^*Z^*)+\Delta^{-\half}K\widetilde\Delta^{\half}
&=&\Delta^{-\half}(\Delta^{-\half}(EW^*+ZEW^*Z^*)+\tilK)\\
&=&\Delta^{-\half}(\overline{x}\Delta^{\half})=\overline{x}I_n
\end{eqnarray*}
is a scalar multiple of the identity $I_n$.

Conversely, assume that $\tilde\Delta$ is positive semidefinite and that
there exists a contractive matrix $K$ such that (\ref{Kcondition}) is a
scalar multiple of $I_k$; say (\ref{Kcondition}) is equal to $\alpha I_k$.
Then take $x$ equal to $\overline{\alpha}$. It follows that
$\tilK=K\widetilde\Delta^{\half}$ is given by (\ref{tilK}) and satisfies
$\tilK^*\tilK\leq\widetilde\Delta$. In other words, for this choice of $x$
the inequality (\ref{eq2}) holds.
Hence the Pick matrix $\Pick_x'$ is positive semidefinite, and a solution
exists.

To verify the last statement, note that $\Pick_x'$ is positive definite if and only if the
Schur complement of $\Pick_x'$ with respect to the block $\sbm{1-|x|^2&0\\0&1-|x|^2}$ is
positive definite, which is the same as having strict inequality in (\ref{eq2}).
Since the left hand side in (\ref{eq2}) is positive semidefinite is follows right away that
$\widetilde\Delta$ must be positive definite for $\Pick_x'$ to be positive definite.
So assume $\widetilde\Delta>0$.
If $x\in{\mathbb D}$ with $\Pick_x'$ positive definite, then $\tilK$ in (\ref{tilK}) satisfies
$\tilK^*\tilK<\widetilde\Delta$, and thus $\tilK=K\widetilde\Delta^{\half}$ for some strict
contraction $K$. With the same argument as above it then follows that $\tilX=xI_n$ is given
by (\ref{Kcondition}). Conversely, if $K$ is a strictly contractive matrix such that $\tilX$
in (\ref{Kcondition}) is a scalar multiple of the identity, say $\tilX=\alpha I_n$, then
as above it follows that (\ref{eq2}) holds with $x=\overline{\alpha}$ but now with strict
inequality.
\end{proof}

As in Theorem \ref{T:DFK}, the criterion of Theorem \ref{T:LMIclas} is one of verifying
whether there exists an element in a certain matrix ball that has a specified
structure. In the special case of a constrained Nevanlinna-Pick problem with just
one point ($n=1$) the result of Theorem \ref{T:LMIclas} provides a definitive answer
to the problem {\bf CNP}.

\begin{corollary}\label{C:disk}
Let ${\mathfrak D}$ be a data set as in (\ref{clas-data}) with $n=1$ and
$|w_1|<1$. Then a solution always exists, and set of values $x$ for which
the Pick matrix $\Pick_x$ is positive semidefinite is the closed disk
$\overline{{\mathbb D}}(c; r) = \{ x \colon | x - c| \le r\}$ with
center $c$ and radius $r$ given by
\begin{equation}\label{CandR}
c=\frac{w_1(1-|z_1|^4)}{1-|z_1|^4|w_1|^2}\quad\text{and}\quad
r=\frac{|z_1|^2(1-|w_1|^2)}{1-|z_1|^4|w_1|^2}.
\end{equation}
Moreover, $\Pick_x$ is positive definite if and only if $x$ is in the open disk
${\mathbb D}(c; r) = \{ x \colon | x - c| < r\}$.
\end{corollary}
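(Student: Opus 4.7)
The plan is to specialize Theorem \ref{T:LMIclas} to the one-point case $n=1$, where everything collapses to scalars and the structural constraint that $\tilX$ be a scalar multiple of $I_n$ becomes automatic. Throughout I write $a=|z_1|^2$ and $b=|w_1|^2$. With $E=1$, $W=w_1$, $Z=z_1$ the Pick matrix is $\Pick=\frac{1-b}{1-a}$, and
\[
\Delta = \Pick + |w_1|^2 + |z_1|^2|w_1|^2 = \frac{1-a^2b}{1-a} > 0,
\]
so the hypotheses of Theorem \ref{T:LMIclas} are in force.

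Next I would compute $\widetilde\Delta$. Placing the pieces
\[
\frac{1-b}{1-a} - (1+a) + \frac{|w_1|^2(1+a)^2(1-a)}{1-a^2b}
\]
over the common denominator $(1-a)(1-a^2b)$ and expanding the numerator leads, after cancellation, to
\[
\widetilde\Delta = \frac{a^2(1-b)^2}{(1-a)(1-a^2b)} > 0.
\]
This is the main computational step: the fact that the numerator turns out to be a perfect square is precisely what produces the clean disk formula in \eqref{CandR}, and verifies that condition (1) of Theorem \ref{T:LMIclas} always holds.

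For $n=1$ the matrix $\tilX$ in \eqref{Kcondition} is automatically a scalar, so condition (2) of Theorem \ref{T:LMIclas} is vacuous, and therefore a solution always exists. It remains to read off the set of admissible $x$. Plugging $K=0$ into \eqref{Kcondition} gives the center
\[
\overline{c} = \Delta^{-1}(EW^*+ZEW^*Z^*) = \frac{1-a}{1-a^2b}\cdot \overline{w_1}(1+a) = \frac{\overline{w_1}(1-|z_1|^4)}{1-|z_1|^4|w_1|^2},
\]
and the scalar multiplier of $K$ has modulus
\[
\Delta^{-1/2}\widetilde\Delta^{1/2} = \sqrt{\frac{a^2(1-b)^2}{(1-a^2b)^2}} = \frac{|z_1|^2(1-|w_1|^2)}{1-|z_1|^4|w_1|^2},
\]
which agree with $c$ and $r$ of \eqref{CandR} after complex conjugation.

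Finally, as $K$ ranges over $\{K\in\C:|K|\le 1\}$, $\overline{x}$ in \eqref{Kcondition} traces the closed disk of radius $r$ around $\overline{c}$; conjugating, $x$ traces $\overline{\mathbb D}(c;r)$. The openness half follows verbatim from the last assertion of Theorem \ref{T:LMIclas}: $\Pick_x$ is positive definite if and only if the associated $K$ is a strict contraction, which is exactly the condition $|x-c|<r$. The only real obstacle is the algebraic identification of $\widetilde\Delta$ as $\Delta^{-1}$ times a perfect square; everything else is a direct specialization of Theorem \ref{T:LMIclas}.
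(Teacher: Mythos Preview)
Your proof is correct and follows essentially the same approach as the paper's own proof: both specialize Theorem~\ref{T:LMIclas} to $n=1$, observe that the structural constraint in \eqref{Kcondition} becomes vacuous, compute $\Delta$ and $\widetilde\Delta$ explicitly, and then read off $c$ and $r$ from \eqref{Kcondition}. Your treatment is slightly more detailed in that you explicitly carry out the algebra for $\widetilde\Delta$ and explicitly invoke the final assertion of Theorem~\ref{T:LMIclas} for the positive-definite case, but the argument is the same.
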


\begin{proof}
The condition that (\ref{Kcondition}) be a scalar multiple of $I_n$ is
trivially satisfied because $n=1$. It follows, after some computations, that
\[
\Delta=\frac{1-|w_1|^2|z_1|^4}{1-|z_1|^2}>0\quad\text{and}\quad
\widetilde\Delta=\left(\frac{|z_1|^2(1-|w_1|^2)}{1-|w_1|^2|z_1|^4}\right)^2\Delta>0.
\]
Thus, by Theorem \ref{T:LMIclas}, a solution exists. Moreover, the last part
of Theorem \ref{T:LMIclas} tells us that the set of $x$ so that $\Pick_x$ is positive
semidefinite is given by the complex conjugates of $\tilX$ in (\ref{Kcondition}),
where $K$ is now an element of the closed disk $\overline{\bbD}$.
Thus $\Pick_x$ is positive semidefinite for all
$x$ in the closed disk with center $c$ and radius $r$ given by
\begin{eqnarray*}
c&=&\overline{\Delta^{-1}(\overline{w_1}(1+|z_1|^2))}=\frac{w_1(1-|z_1|^4)}{1-|z_1|^4|w_1|^2},\\
r&=&\Delta^{-\half}\widetilde\Delta^{\half}=\left(\widetilde\Delta\Delta^{-1}\right)^{\half}
=\frac{|z_1|^2(1-|w_1|^2)}{1-|z_1|^4|w_1|^2}.
\end{eqnarray*}
\end{proof}

\begin{remark}\textup{
We observed that for the problem {\bf CNP} the points $x$ so that the
Pick matrix
$\Pick_x$ in (\ref{Pick-X}) is positive semidefinite must be in the
open unit disk
$\bbD$ whenever all values $w_1,\ldots,w_n$ from the data set are in $\bbD$. For the special
case that $n=1$, Corollary \ref{C:disk} tells us that the set of $x$ for which
$\Pick_x$ is positive semidefinite is given by a closed disk with center $c$ and
radius $r$ given by (\ref{CandR}). It should then be the case that
this disk in
contained in $\bbD$. This is in fact so, since
\begin{eqnarray*}
1-|c|-r
&=&\frac{1-|z_1|^4|w_1|^2+|z_1|^4|w_1|-|w_1|-|z_1|^2+|z_1|^2|w_1|^2}{1-|z_1|^4|w_1|^2}\\
&=&\frac{(1-|z_1|^2)(1-|w_1|)(1-|z_1|^2|w_1|)}{1-|z_1|^4|w_1|^2}>0.
\end{eqnarray*}
}\end{remark}

\section{Interpolation bodies associated with a set of interpolants}
\label{S:bodies}

Let us consider the classical
    (unconstrained)
    matrix-valued interpolation problem:
    \medskip

    {\bf MNP:}  {\em Given points $z_{1}, \dots, z_{n} \in {\mathbb
    D}$ and matrices $W_{1}, \dots, W_{n} \in {\mathbb C}^{k \times
    k}$, find $S \in \cS^{k \times k}$
    satisfying interpolation conditions
    \begin{equation}  \label{op-int'}
     S(z_{j}) = W_{j}\quad\text{for}\quad j = 1, \dots, n.
    \end{equation}
    and assume that the associated Pick matrix
    $$
    \Pick = \left[\frac{ I - W_{i} W_{j}^{*}}{1 - z_{i}
    \overline{z_{j}}} \right]_{i,j=1,\dots, n}
    $$
    is invertible.
    Choose a point $z_{0} \ne z_{1}, \dots, z_{n}$
    and consider the problem of characterizing the associated {\em
    interpolation body
    $$
    {\mathfrak B} = {\mathfrak B}({\mathfrak D}, \cS^{k \times k},
    z_{0}):= \{ \tilX = S(z_{0}) \colon S \in \cS^{k \times k}
    \text{ satisfies } \eqref{op-int'}\}.
    $$
    Then application of the classical Pick matrix condition to the
    $(n+1)$-point set $\{z_{1}, z_{2}, \dots, z_{n}, z_{0}\}$ leads
    to the characterization of the Pick body as the set of all
    $\tilX$ for which \eqref{criterion1} is satisfied, where we take
    $$
    \tilE = \begin{bmatrix} I_{k} \\ \vdots \\ I_{k} \end{bmatrix}
    (1-|z_{0}|^{2})^{1/2}, \quad
    \tilW = \begin{bmatrix} W_{1} \\ \vdots \\ W_{n} \end{bmatrix}
    (1-|z_{0}|^{2})^{1/2}.
    $$
    Hence as an application of Theorem \ref{T:DFK} we arrive at the
    well known result (see \cite[Section 5.5]{DFK92} for the case of
    the Schur problem) that interpolation bodies associated with
    Schur-class interpolants for a data set ${\mathfrak D}$
    can be described as matrix  balls.

    We now consider the interpolation body for constrained
    Schur-class interpolants for a data set ${\mathfrak D}$:
    $$
    {\mathfrak B} = {\mathfrak B}(({\mathfrak D}, (\cS_{1})^{k \times
    k}, z_{0}) =
    \{ \tilX = S(z_{0}) \colon S \in (\cS_{1})^{k \times k}
       \text{ satisfies } \eqref{op-int'}\}.
    $$
    For simplicity we assume that $k=1$ and $n=1$.  We are thus led
    to the following problem:  {\em Given nonzero points $z_{0}, z_{1}$
    in the unit disk ${\mathbb D}$, describe the set}
    \begin{equation}  \label{1intbody}
    {\mathcal B}_{z_{0}} = \{ w_{0} \colon s \in \cS_{1}, \,
    s(z_{1}) = w_{1}, s(z_{0}) = w_{0} \}.
    \end{equation}
    Given a data set $(z_{1}, w_{1}; z_{0})$ as above along with a
    complex parameter $x$, we introduce auxiliary matrices as follows:
    \begin{align}
     &  \Pick'_{x} = \begin{bmatrix}  1 - |x|^{2} & 0 & 1 -
    \overline{w_{1}} x \\
    0 & 1 - |x|^{2} & \overline{z_{1}}(1 - \overline{w_{1}} x) \\
    1 - w_{1} \overline{x}& z_{1} (1 - w_{1} \overline{x}) &
    \frac{ 1 - |w_{1}|^{2}}{1 - |z_{1}|^{2}} \end{bmatrix},
    \notag \\
     &  E  = \begin{bmatrix} 1 \\ \overline{z_{0}} \\ \frac{1}{ 1 -
    \overline{z_{0}} z_{1}} \end{bmatrix} \delta_{0}^{1/2}, \quad
    W_{x} = \begin{bmatrix} -x \\ -\overline{z_{0}}x \\ -
    \frac{w_{1}}{1 - \overline{z_{0}}z_{1}} \end{bmatrix}
    \delta_{0}^{1/2} \text{ where } \delta_{0}: = 1 - |z_{0}|^{2}.
    \label{matrices}
  \end{align}
  We then define numbers (i.e., $1 \times 1$ matrices) $c_{x}$ and $R_{x}$ by
  \begin{align}
       c_{x} = & -E^{*}( \Pick'_{x} + W_{x} W_{x}^{*})^{-1} W_{x},
    \quad  R_{x} = \ell_{x}^{1/2} r_{x}^{1/2} \text{ where } \notag \\
     &  \ell_{x} = 1 - W_{x}^{*}\left( \Pick_{x}' + W_{x}
      W_{x}^{*}\right)^{-1} W_{x}, \quad r_{x} = 1 - E^{*} \left(
      \Pick_{x}' + W_{x} W_{x}^{*}\right)^{-1} E.
      \label{numbers}
   \end{align}
   The following result gives an indication of how the geometry of the
   interpolation body ${\mathcal B}_{z_{0}}$ is more complicated for the
   constrained case in comparison with the unconstrained case where it
   is simply a disk; specifically, we identify a union of a 1-parameter
   family of disks, where the parameter itself runs over a certain subset
   of a disk, as a subset of ${\mathcal B}_{z_{0}}$. In general we use the
   notation
   $$
   \overline{\mathbb D}(C,R) = \{ w \in {\mathbb C} \colon |w - C|
   \le R \}
   $$
   for the closed disk in the complex plane with center $C \in
   {\mathbb C}$ and radius $R > 0$.

   \begin{proposition} The union of disks
       \begin{equation}  \label{Bz0}
       \bigcup_{x\in\mathbb D(c,r),r_x>0}\overline{\mathbb D}(c_{x}, R_{x})
       \end{equation}
       is a subset of the interpolation body ${\mathcal B}_{z_{0}}$ in \eqref{1intbody}.
       Here $c,r$ are as in \eqref{CandR} and $r_x,c_{x},R_{x}$
       are as in \eqref{numbers}.
       \end{proposition}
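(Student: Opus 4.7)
The strategy is to regard, for each fixed $x \in \mathbb D(c,r)$ with $r_x>0$, the problem as a classical (unconstrained) Schur-class interpolation problem at four nodes and apply Theorem \ref{T:DFK} with a scalar free parameter. By the strict version of Corollary \ref{C:disk} together with the Schur-complement equivalence of the matrix $\Pick_x$ in \eqref{Pick-X} and the matrix $\Pick'_x$ in \eqref{Pick-X'} (both specialized to $n=k=1$), the matrix $\Pick'_x$ given in the proposition is positive definite, so some $s \in \cS_1$ with $s(z_1)=w_1$ and $s(0)=x$ already exists. What remains is to identify the full range of values $w_0 = s(z_0)$ as $s$ varies over such functions.

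Treating $w_0$ as a free complex parameter, I consider the classical Schur-class problem of finding $s \in \cS$ satisfying
\[
s(z_1) = w_1, \quad s(0) = x, \quad s'(0) = 0, \quad s(z_0) = w_0.
\]
Any such solution automatically lies in $\cS_1$, and by the classical Pick criterion for combined Nevanlinna-Pick/Carath\'eodory-Fej\'er data, such $s$ exists if and only if the natural $4\times 4$ augmented Pick matrix is positive semidefinite. A direct computation, followed by multiplication of the new (fourth) row and column by $\delta_0^{1/2} = (1-|z_0|^2)^{1/2}$ (which preserves semidefiniteness), puts this matrix in the form
\[
\begin{bmatrix} \Pick'_x & E + W_x \overline{w_0} \\ E^* + w_0 W_x^* & 1 - |w_0|^2 \end{bmatrix}
\]
with $E$ and $W_x$ exactly as in \eqref{matrices}. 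This is precisely the LMI \eqref{criterion1} of Theorem \ref{T:DFK} with $\Pick=\Pick'_x$, $\tilE=E$, $\tilW=W_x$, and scalar free parameter $\tilX=w_0$.

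I would then invoke Theorem \ref{T:DFK}. The hypothesis $\Pick'_x\succ 0$ is already in hand, and the matrix $M$ of \eqref{M} is invertible because its lower-right block $M_{22} = -(1 + W_x^* (\Pick'_x)^{-1} W_x)$ is strictly negative and its Schur complement equals $\Lambda = r_x \neq 0$ by assumption. The theorem then identifies the set of admissible $w_0$ with the matrix ball
\[
\{\, c_x + \ell_x^{1/2}\, K\, r_x^{1/2} : K \in \overline{\mathbb D}\,\} = \overline{\mathbb D}(c_x, R_x),
\]
via the identifications $C=c_x$, $L=\ell_x$, $R=r_x$ in the theorem's notation. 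Hence every point of $\overline{\mathbb D}(c_x,R_x)$ is of the form $s(z_0)$ for some $s \in \cS_1$ satisfying $s(z_1)=w_1$, so $\overline{\mathbb D}(c_x,R_x) \subseteq \mathcal B_{z_0}$; taking the union over all admissible $x$ gives the claim.

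The chief obstacle is the bookkeeping in the second paragraph: one must verify entry by entry that the three ``mixed'' entries of the augmented Pick matrix (the interactions of the new condition $s(z_0)=w_0$ with $s(0)=x$, $s'(0)=0$, and $s(z_1)=w_1$, respectively) become, after the $\delta_0^{1/2}$ rescaling, $\delta_0^{1/2}(1-x\overline{w_0})$, $\delta_0^{1/2}\overline{z_0}(1-x\overline{w_0})$, and $\delta_0^{1/2}(1-w_1\overline{w_0})/(1-z_1\overline{z_0})$, matching the three components of $E + W_x\overline{w_0}$ read off from \eqref{matrices}. A minor additional point is that Theorem \ref{T:DFK} is nominally stated for a $2k\times 2k$ free parameter $\tilX$; one must note that its Kre\u{\i}n-space argument applies verbatim to a scalar $\tilX$, or else cite a general matrix-ball result such as \cite[Theorem 1.6.3]{DFK92}.
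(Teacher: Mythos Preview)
Your proposal is correct and follows essentially the same route as the paper: both write down the $4\times 4$ augmented Pick matrix for the Carath\'eodory--Fej\'er/Nevanlinna--Pick data $\{s(0)=x,\ s'(0)=0,\ s(z_1)=w_1,\ s(z_0)=w_0\}$, rearrange and rescale it into the block form $\sbm{\Pick'_x & E+W_x\overline{w_0}\\ E^*+w_0W_x^* & 1-|w_0|^2}$, invoke Corollary~\ref{C:disk} to get $\Pick'_x\succ 0$ for $x\in\mathbb D(c,r)$, and then apply Theorem~\ref{T:DFK} in the scalar case. Your explicit verification that $M$ is invertible (via $M_{22}<0$ and $\Lambda=r_x\neq 0$) and your remark about adapting Theorem~\ref{T:DFK} to a scalar free parameter are details the paper leaves implicit.
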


       \begin{proof}
    As an application of Theorem \ref{T:2} we see that $w_{0} \in
    {\mathcal B}_{z_{0}}$ if and only if there exists an $x \in
    {\mathbb D}$ so that the Pick matrix
    $$ {\mathbb P}_{x,w_{0}} =
    \begin{bmatrix} \frac{1 - |w_{1}|^{2}}{1 - |z_{1}|^{2}} &
        \frac{1 - w_{1} \overline{w_{0}}}{1 - z_{1}\overline{z_{0}}}
        & 1 - w_{1} \overline{x} & z_{1} (1 - w_{1} x) \\
     \frac{ 1 - \overline{w_{1}}w_{0}} {1- \overline{z_{1}}
     z_{0}} & \frac{1 - |w_{0}|^{2}}{1 - |z_{0}|^{2}} & 1 - w_{0}
     \overline{x} & z_{0}(1 - w_{0} x) \\
     1 - \overline{w_{1}}x & 1 - \overline{w_{0}}x & 1 - |x|^{2} &
     0 \\ \overline{z_{1}}(1 - \overline{w_{1}}x) &
     \overline{z_{0}} (1 - \overline{w_{0}}x) & 0 & 1 - |x|^{2}
     \end{bmatrix}
   $$
   is positive semidefinite. Interchanging the first two rows with
   the last two rows and similarly for the columns brings us to
   $$
   {\Pick}'_{x,w_{0}} =
   \begin{bmatrix} 1 - |x|^{2} & 0 & 1 - \overline{w_{1}}x &
       \delta_{0}^{1/2} (1 - \overline{w_{0}}x)  \\
       0 & 1 - |x|^{2} & \overline{z_{1}} (1 - \overline{w_{1}}x) &
       \delta_{0}^{1/2} \overline{z_{0}} (1 - \overline{w_{0}}x)  \\
       1 - w_{1} \overline{x} & z_{1} (1 - w_{1}\overline{x}) &
       \frac{1 - |w_{1}|^{2}}{1 - |z_{1}|^{2}} & \delta_{0}^{1/2}
       \frac{ 1 - w_{1} \overline{w_{0}}}{1 - \overline{z_{0}} z_{1}}
       \\  \delta_{0}^{1/2} ( 1 - w_{0} \overline{x}) &
       \delta_{0}^{1/2} z_{0}(1 - w_{0} \overline{x}) &
       \delta_{0}^{1/2} \frac{ 1 -\overline{ w_{1}} w_{0}}{1 - z_{0}
       \overline{z_{1}}}  & 1 - |w_{0}|^{2} \end{bmatrix}
  $$
  where we also multiplied the last row and last column by
  $\delta_{0}^{1/2} = (1 - |z_{0}|^{2})^{1/2}$.
  Next observe that we can write $\Pick_{x,w_{0}}$ as
  $$
  \Pick'_{x,w_{0}} = \begin{bmatrix} \Pick'_{x} & E + W_{x}
  \overline{w_{0}} \\ E^{*} + \overline{w_{0}} W_{x}^{*} & 1 - w_{0}
  \overline{w_{0}} \end{bmatrix}
  $$
  where $\Pick'_{x}$, $E$ and $W_{x}$ are as in \eqref{matrices}.
  We conclude that $w_{0}$ is in the interpolation body ${\mathcal
  B}_{z_{0}}$ if and only if there is an $x \in {\mathbb D}$ for
  which the matrix $\Pick'_{x,w_{0}}$ is positive-semidefinite.
  By Corollary \ref{C:disk} we see in particular that $\Pick'_{x}$ is
  positive definite if and only if $x \in \mathbb D(c,r)$
  where $c$ and $r$ are as in \eqref{CandR}.
  For a fixed such $x$ we apply Theorem \ref{T:DFK} (tailored to the
  case $k=1$) to see that then $\Pick'_{x,w_{0}}$ is positive
  semidefinite in case $r_x>0$ and
  $w_{0} \in \overline{\mathbb D}(c_{x},
  R_{x})$.
 \end{proof}


\begin{thebibliography}{99}

       \bibitem{Abrahamse} M.B.~Abrahamse, The Pick interpolation
       theorem for finitely connected domains, {\em Michigan Math.~J.}
       \textbf{26} (1979), 195--203.

       \bibitem{AMcC} J.~Agler and J.E.~McCarthy, Distinguished
       varieties, {\em Acta Math.} \textbf{194} (2005), 133--153.

       \bibitem{Ball79} J.A.~Ball, A lifting theorem for operator models
       of finite rank on multiply-connected domains, {\em J.~Operator
       Theory} \textbf{1} (1979), 3--25.

       \bibitem{BGR} J.A.~Ball, I.~Gohberg and L.~Rodman, {\em
       Interpolation of Rational Matrix Functions}, \textbf{OT45}
       Birkh\"auser-Verlag, Basel-Berlin-Boston, 1990.

       \bibitem{BGK} H. Bart, I.~Gohberg and M.A.~Kaashoek, {\em
       Minimal Factorization of Matrix and Operator Functions}, \textbf{OT1}
       Birkh\"auser-Verlag, Basel-Berlin-Boston, 1979.

       \bibitem{Bognar} J.~Bognar, {\em Indefinite Inner Product
       Spaces}, Springer, Berlin, 1974

       \bibitem{DPRS} K.R.~Davidson, V.I.~Paulsen, M.~Raghupathi and
       D.~Singh, A constrained Nevanlinna-Pick interpolation problem,
       {\em Indiana Univ.~Math.~J.}, to appear.

       \bibitem{DFK92} V.K. Dubovoj, B. Fritzsche, and B. Kirstein,
       {\em Matricial version of the classical Schur problem},
        Teubner-Texte zur Mathematik {\bf 129}, B.G. Teubner
        Verlagsgesellschaft mbH, Stuttgart, 1992.

       \bibitem{DP} G.E.~Dullerud and F.~Paganini, {\em A Course in
       Robust Control Theory:  A Convex Approach}, Texts in Applied
       Mathematics \textbf{36}, Springer, New York-Berlin, 2000.

       \bibitem{FV} S.I.~Fedorov and V.~Vinnikov, The
       Nevanlinna-Pick interpolation in multiply connect domains, {\em
       Journal of Mathematical Sciences} \textbf{105} (2001) no. 4,
       2109--2126.

       \bibitem{Harris}  L.A.~Harris, Bounded symmetric homogeneous
       domains in infinite dimensional spaces, {\em, Proceedings on
       Infinite Dimensional Holomorphy (Internat.~Conf.,
       Univ.~Kentucky, Lexington, KY., 1973)}, pp. 13--40, Lecture
       Notes in Math. \textbf{364}, Springer, Berlin, 1974.

       \bibitem{Hua} L.-K. Hua, Geometries of matrices. III. Fundamental theorems
       in the geometries of symmetric matrices, {\em Trans.~Amer.~Math.~Soc.} {\bf 61},
       (1947), 229--255.

       \bibitem{McC} S.~McCullough, Isometric representations of some
       quotients of $H^{\infty}$ of an annulus, {\em Indiana
       Univ.~Math.~J.} \textbf{39} (2001), 335--362.

       \bibitem{McCP} S.~McCullough and V.I.~Paulsen,
       $C^{*}$-envelopes and interpolation theory, {\em Indiana
       Univ.~Math.~J.} \textbf{51} (2002), 479--505.

       \bibitem{NF} B.~Sz.-Nagy and C.~Foias, {\em Harmonic Analysis of
       Operators on Hilbert Space}, American Elsevier, 1970.

       \bibitem{NN} Y.~Nesterov and A. Nemirovskii, {\em Interior Point
       Polynomial Algorithms in Convex Programming}, SIAM Studies in
       Applied Math., vol. 13, Soc.~Industrial Appl.~Math.,
       Philadelphia, PA, 1994.

       \bibitem{N19} R. Nevanlinna, \"Uber beschr\"ankte Funktionen die
       in gegebenen Punkten vorgeschriebene Werte annehmen,
       {\em Ann.~Acad.~Sci.~Fenn.~Ser.~A I Math.} {\bf 13:1} (1919).

       \bibitem{Pisier}  G.~Pisier, {\em Introduction to Operator
       Space Theory}, London Mathematical Society Lecture Note Series
       \textbf{294}, Cambridge University Press, Cambridge, 2003.

       \bibitem{P16} G. Pick, \"Uber die Beschr\"ankungen analytischer Funktionen,
       welche durch vorgegebene Funktionswerte bewirkt werden, {\em Math.~Ann.}
       {\bf 77} (1916), 7--23.

       \bibitem{Potapov55} V.P. Potapov, The multiplicative structure of $J$-contractive matrix functions
       (Russian), {\em Trudy Moskov.~Mat.~Ob\v s\v c.} {\bf 4}, (1955), 125--236.

       \bibitem{Potapov79} V.P. Potapov, Linear-fractional transformations of matrices (Russian)
       in: {\em Studies in the theory of operators and their applications {\bf 177} (Russian)},
       pp. 75--97, "Naukova Dumka", Kiev, 1979.

       \bibitem{Raghu}  M.~Raghupathi, Nevanlinna-Pick interpolation
       for ${\mathbb C} + B H^{\infty}$, preprint, 2008.

       \bibitem{Sarason} D.~Sarason, Generalized interpolation in
       $H^{\infty}$, {\em Trans.~Amer.~Math.~Soc.} \textbf{127} (1967),
       179--203.

       \bibitem{Siegel} C.L. Siegel, Symplectic geometry, {\em Amer.~J.~Math.} {\bf 65}, (1943), 1--86.

       \bibitem{Solazzo} J.~Solazzo, {\em Interpolation and
       Computability}, Ph.D.~dissertation, University of Houston, 2000.
\end{thebibliography}
\end{document}